\newcommand*{\MRref}[2]{ \href{http://www.ams.org/mathscinet-getitem?mr=#1}{MR #1}}
\renewcommand{\PrintDOI}[1]{\href{http://dx.doi.org/#1}{DOI #1}%
  \IfEmptyBibField{volume}{, (to appear in print)}{}}
\newcommand{\scalefactor}{.66}			
\newcommand*{\alg}{\mathcal}
\newcommand*{\hilb}{\mathscr}
\newcommand*{\N}{\mathbb N}
\newcommand*{\C}{\mathbb C}
\newcommand*{\one}{\mathbbm 1}
\renewcommand*{\H}{\hilb H}
\renewcommand{\L}{\hilb L}
\newcommand*{\K}{\hilb K}
\newcommand*{\B}{\alg B}
\newcommand{\chL}{{\check L}}
\newcommand*{\Kp}{\alg K}
\newcommand{\id}{\textnormal{id}}
\newcommand{\skapro}[1]{\langle {#1} \rangle}
\newcommand{\Cat}{\mathcal C}
\newcommand{\YD}{\mathcal{YD}}
\newcommand{\FYDF}{{{}_F\YD^F(\Cat)}}
\newcommand{\WYDW}{{{}_W\YD^W(\Cat)}}
\newcommand{\Hilb}{\textnormal{Hilb}}
\newcommand{\Hom}{\textnormal{Hom}}
\newcommand*{\End}{\textnormal{End}}
\newcommand*{\ot}{\otimes}
\newcommand*{\bt}{\boxtimes}
\DeclareMathOperator{\hbt}{\hat\boxtimes}
\DeclareMathOperator{\habt}{\hat{\overline{\boxtimes}}}
\newcommand{\hatA}{\hat A}
\newcommandx*{\Malg}{M}
\newcommandx*{\U}{\mathcal{U}}
\newcommand{\Calg}{{C^*\textnormal{-Alg}}}
\newcommand{\algA}{\alg A}
\newcommand{\For}{\mathrm{For}}
\newcommand{\braL}{\langle\L|}
\newcommand{\ketL}{|\L\rangle}
\newcommand{\altern}[2]{
\raisebox{.8ex}{#1}
\ensuremath{\mkern-16mu\rotatebox{-17}{\bigg/}\mkern-16mu}
\raisebox{-.8ex}{#2}}
\newcommand{\altops}[2]{
\ \raisebox{.8ex}{\ensuremath{#1}}
\ensuremath{\mkern-6mu}\rotatebox{-15}{\big/}\ensuremath{\mkern-6mu}
\raisebox{-.8ex}{\ensuremath{#2}}\ }
\newcommand{\inceq}{\altops{\subseteq}{=}}
\newcommand{\rinceq}{\altops{\supseteq}{=}}
\newcommand{\bra}[2]{
\node[draw,fill=white,minimum width=.0em,
	inner sep=.2em,
	regular polygon,regular polygon sides=3,
	shape border rotate=0,
	yscale=.8] at (#1,#2) {};
}
\newcommand{\ket}[2]{
\node[draw,fill=white,minimum width=.0em,
	inner sep=.2em,
	regular polygon,regular polygon sides=3,
	shape border rotate=60,
	yscale=.8] at (#1,#2) {};
}
\numberwithin{equation}{section}
\theoremstyle{plain}
	\newtheorem{lem}{Lemma}
	\newtheorem{prop}[lem]{Proposition}
	\newtheorem{thm}[lem]{Theorem}
\theoremstyle{definition}
	\newtheorem{df}[lem]{Definition}
\theoremstyle{remark}
	\newtheorem{rmk}[lem]{Remark}
\begin{document}

\title{Regularity for braided multiplicative unitaries}

\author{David~B\"ucher}
\email{dbuecher@uni-math.gwdg.de}
\address{Mathematisches Institut\\
  Georg-August Universit\"at G\"ottingen\\
  Bunsenstra{\ss}e 3--5\\
  37073 G\"ottingen\\
  Germany}

\author{Sutanu Roy}
\email{sutanu@uni-math.gwdg.de}
\address{Indian Statistical Institute\\
 203, B.T. Road, Kolkata 700108 \\
 India}

\begin{abstract}
We generalise the notions of semi-regularity, regularity, 
and bi-regularity to unitary solutions of the braided Pentagon equation 
in concrete $W^*$-categories with 
semi-regular/regular/bi-regular braiding, and study their 
properties. 
We show, for example, that under bi-regularity-assumptions, 
the leg-algebras form braided $C^*$-bialgebras. 
Moreover, we \enquote{close the circle} for the representation categories: 
The braiding on the Yetter-Drinfeld category of 
a \mbox{(semi-)}regular multiplicative unitary in a category 
with (semi-)regular braiding 
is again (semi-)regular.
\end{abstract}

\subjclass[2010]{Primary 46L89, 22D25; Secondary 81R50}
\keywords{Braided multiplicative unitaries, Regularity, Yetter-Drinfeld category}

\maketitle

\section{Introduction}
Multiplicative unitaries, i.e.\@ unitary operators 
$W$ acting on $\K \ot \K$, for some 
Hilbert space $\K$, that satisfy the 
\emph{Pentagon equation} 
\begin{align}
 \label{eq:ordPE}
  W_{23}W_{12} = W_{12}W_{13}W_{23} 
  &\qquad\text{in~$\U(\K\ot \K\ot \K)$} \ ,
\end{align}
provide a fairly algebraic 
approach to locally compact quantum groups. 
Namely, given a multiplicative unitary $W$, one 
wants to construct a quantum group as follows. 
The set of slices $(\id \ot \omega)(W)$, 
where $\omega$ runs over the normal linear 
functionals on $\B(\K)$, forms an algebra $\hatA_0(W)$. 
Passing to the norm-closure produces 
a Banach algebra $\hatA(W)$. 
In order to make sure that $\hatA(W)$ is a $C^*$-algebra
carrying a bialgebra structure, 
one needs to impose further conditions on $W$, 
like \emph{regularity} or \emph{manageability}, 
see \cite{Baaj-Skandalis:Unitaires, Woronowicz:Mult_unit_to_Qgrp}. 
In this paper, we generalise the notion of 
regularity to braided multiplicative unitaries. 

Our setting is the following: 
We fix a concrete monoidal $W^*$-category $\Cat$, 
i.e.\@ a monoidal $W^*$-category together with a 
monoidal fibre functor to the category of 
Hilbert spaces. On $\Cat$, we fix a unitary 
braiding $c = (c_{H,K})_{H,K\in\Cat}$. 
Then, by a \emph{braided multiplicative unitary}, 
or, a \emph{multiplicative unitary $F$ in $\Cat$} 
we understand a unitary solution 
$F \in \U_\Cat(L \ot L)$ 
to the 
Pentagon equation in $\Cat$, also called fusion equation 
in \cite{Street:Fusion_Operator}, 
\begin{align}
F_{23}F_{12} = F_{12} (c^{-1}_{L,L})_{12} F_{23} (c_{L,L})_{12} F_{23} \ .
\end{align}
We define $\hat A(F) \subseteq \B(\L)$ 
as the norm closure of the set of slices 
$(\id \ot \omega)(F)$, 
where $\omega$ runs over the normal linear 
functionals on the algebra of bounded linear 
operators $\B(\L)$ on the image $\L$ of $L$ under 
the fibre functor. 
We stress that we 
wish to let $\omega$ run over all normal linear 
functionals on $\B(\L)$, and not just over 
those on $\End_\Cat(L)$, 
so this is the point why we require $\Cat$ to 
be a \emph{concrete} monoidal $W^*$-category. 
In contrast to the non-braided case, it is no 
more automatic that $\hat A(F)$ is even an 
algebra. 
The way we assure this here is to impose 
a regularity-condition on the braiding $c_{L,L}$
(see Definition \ref{df:regularity-for-braiding} and 
Proposition \ref{prop:F-slices-are-non-deg-algebras}). 

In Section \ref{ssec:to-C*-algebras} we introduce 
the notion of regularity for a braided multiplicative 
unitary $F$ and show, as a first consequence, 
that it implies that $\hatA(F)$ 
is a $C^*$-algebra (Proposition \ref{prop:hat-AF-is-a-C*-alg}). 
We then wish to obtain an analoge of the fact -- 
known in the non-braided setting -- that $\hatA(W)$ carries a $C^*$-bialgebra 
structure. 
To this end, we have to first relax the notion of 
$C^*$-bialgebra: 
An ordinary $C^*$-bialgebra is a coalgebra 
object in the tensor category $\Calg$ with objects being $C^*$-algebras, 
morphisms $f \in \Hom_\Calg(A, B)$ being non-degenerate 
$^*$-homomorphisms from $A$ to the multiplier algebra $\Malg(B)$ 
of $B$, and with the minimal $C^*$-tensor product $\ot_\mathrm{min}$. 
We define a \emph{braided $C^*$-bialgebra} as 
a coalgebra object in a \emph{monoidal category of $C^*$-algebras}, 
where we allow for a tensor product different from $\ot_\mathrm{min}$, 
see Definitions \ref{df:monoidal-cat-of-C*-algebras} \& 
\ref{df:braided-C*-bialgebra}. 
We go on in Section \ref{ssec:category-hat-A(C)} to define 
a suitable monoidal category of $C^*$-algebras $\hatA(\Cat)$, 
and show in Section \ref{ssec:hat-A(F)-as-C*-bialgebra} 
that $\hatA(F)$ is indeed a $C^*$-bialgebra in this category.

The regularity-condition for braidings, which we 
imposed in the beginning, is nicely compatible with 
the regularity-condition of a braided multiplicative 
unitary: 
The braiding on the category of Yetter-Drinfeld modules associated 
with a regular braided multiplicative unitary $F$ 
in a regularly braided category satisfies itself 
the regularity-condition. 
This we show in Section \ref{sec:YD-categories}. 
Moreover, regularity is inherited under taking 
semi-direct products of multiplicative unitaries, 
as we see in Section \ref{sec:semi-direct-products}. 
Finally, in Section \ref{sec:biregularity}, 
we include some comments on bi-regularity.

\section{Preliminaries}

\subsection{General notation regarding Hilbert spaces}
\label{sec:prelim-Hilb-sp-notation}

We denote Hilbert spaces by script letters 
$\H,\K,\L$, etc. 
The set of bounded operators between Hilbert 
spaces $\H,\K$ or on a Hilbert space $\H$ 
is referred to as $\B(\H,\K)$, respectively, $\B(\H)$; 
the respective subsets of compact operators are denoted 
by $\Kp(\H,\K)$ and $\Kp(\H)$; 
the respective pre-duals are denoted by 
$\B(\H,\K)_*$ and $\B(\H)_*$. 
If $X \subseteq \B(\K,\L)$ is a subset, 
then we denote the closed linear span of $X$ by 
$[X]$.

\subsection{Monoidal $W^*$-categories}

We recall here some definitions related to concrete 
monoidal $W^*$-categories. 
See \cites{Ghez-Lima-Roberts:Wstar, Woronowicz:Tannaka-Krein} for references.

A \emph{$C^*$-category} is a category $\Cat$ 
such that 
\begin{itemize*}
\item 
$\Hom_\Cat(X,Y)$ is a Banach space for all 
objects $X,Y \in \Cat$, and 
$\|f \circ g\| \leq \|f\| \|g\|$ for all $f \in \Hom_\Cat(Y,Z), g \in \Hom_\Cat(X,Y)$, 
\item 
there are isometric involutions $^*: \Hom_\Cat(X,Y) \to \Hom_\Cat(Y,X)$ with 
$(f \circ g)^* = g^* \circ f^*$ for all $f \in \Hom_\Cat(Y,Z), g \in \Hom_\Cat(X,Y)$, 
\item $\|f^* f\| = \|f\|^2$ for all $f \in \Hom_\Cat(X,Y)$. 
\end{itemize*}
For an object $X \in \Cat$, we write $\End_\Cat(X)$ 
for $\Hom_\Cat(X,X)$ and $\U_\Cat(X)$ for the set of 
endomorphisms $u \in \End_\Cat(X)$ such 
that $u^*u = \id_X = uu^*$ (\emph{unitary} endomorphisms). 

A \emph{$W^*$-category} is a $C^*$-category $\Cat$ where for all objects 
$X,Y \in \Cat$ there is some Banach space $\Hom_\Cat(X,Y)_*$ whose 
dual space is $\Hom_\Cat(X,Y)$. 
We refer to \cite{Ghez-Lima-Roberts:Wstar} 
for basic definitions and results on $W^*$-categories. 
A functor $F: \Cat \to \Cat'$ between $W^*$-categories is 
called \emph{normal} if the induced maps $\Hom_\Cat(X,Y) \to \Hom_{\Cat'}(F(X),F(Y))$ 
are continuous with respect to the ultraweak topology, 
which is the locally convex topology 
induced by the predual spaces $\Hom_\Cat(X,Y)_*$, $\Hom_{\Cat'}(F(X),F(Y))_*$. 
A monoidal $W^*$-category is a $W^*$-category $\Cat$, together with 
a normal bi-functor $\ot: \Cat \times \Cat \to \Cat$, a distinguished object 
$\one \in \Cat$, and 
\begin{itemize*}
\item 
natural isomorphisms 
$\lambda_X: \one \ot X \to X$, 
$\rho_X: X \ot \one \to X$, 
for all $X \in \Cat$,
\item 
natural isomorphisms 
$\alpha_{X,Y,Z}: (X \ot Y) \ot Z \to X \ot (Y \ot Z)$, 
for all objects $X,Y,Z$ of $\Cat$, 
\end{itemize*}
subject to the usual coherence conditions. 
A monoidal $W^*$-category is called \emph{strict}, 
if all coherence isomorphisms are given by identities. 
While our statements in the sequel of this paper 
do not assume the monoidal categories involved 
to be strict, we will usually leave out the coherence 
isomorphisms from the notation 
(by Mac~Lane's coherence theorem, all admissible 
insertions of coherence isomorphisms give the same result). 

A \emph{concrete} monoidal $W^*$-category is a 
monoidal $W^*$-category $\Cat$ together with a normal strong monoidal functor 
$\For: \Cat \to \Hilb$, where $\Hilb$ is the category 
of Hilbert spaces and bounded linear maps, with the 
Hilbert-space-completed tensor product. 
A \emph{braided $W^*$-category} is a monoidal $W^*$-category 
($\Cat$, $\ot$, $\one$), together with 
a family $c$ of natural isomorphisms $c_{X,Y}:\, X \ot Y \to Y \ot X$ 
such that $c_{U,V \ot W} = (\id_V \ot c_{U,W}) \circ (c_{U,V} \ot \id_W)$ 
and $c_{U \ot V,W} = (c_{U,W} \ot \id_V) \circ (\id_U \ot c_{V,W})$ 
hold for all objects $U,V,W$ of $\Cat$. 
In the following, we will denote the Hilbert spaces 
$\For(H), \For(K), \For(L)$ 
\enquote{underlying} the objects $H,K,L$ of a concrete 
monoidal $W^*$-category $\Cat$ by script letters 
$\H,\K,\L$ -- just as we also denote ordinary Hilbert spaces. 
This is to keep the notation lighter.

\subsection{Graphical notation}

We use a graphical notation for morphisms in braided categories 
similar to the one used in, e.g., 
\cite{Bakalov-Kirillov:Tens_cat}*{Ch.\,2.3}. 
In particular, our pictures are read from bottom to top. 
For example, a (generic) morphism $f:\, X \to Y$ and 
the braiding $c_{X,Y}:\, X \ot Y \to Y \ot X$ are represented by 
\begin{align}
\scalebox{\scalefactor}{
\begin{tikzpicture}[baseline=-\the\dimexpr\fontdimen22\textfont2\relax+.8cm, line width=.1em]
\draw (0,0) -- (0,1.5);
\node at (0,-.25) {$X$};
\node at (0,1.75) {$Y$};
\node[draw,fill=white] at (0,.75) {$f$};
\end{tikzpicture}} 
 	\quad ,
 	\qquad
 	\text{respectively,} 
 	\qquad
\scalebox{\scalefactor}{
\begin{tikzpicture}[baseline=-\the\dimexpr\fontdimen22\textfont2\relax+.6cm, line width=.1em]
\braid[border height=0cm,height=1.25cm,width=1cm] at (0,1.25) s_1^{-1};
\node at (0,-.25) {$X$};
\node at (1,-.25) {$Y$};
\end{tikzpicture}} 
\ .
\end{align}
For the sake of notational compactness, we represent a morphism 
of the form $F \in \End_\Cat(L \ot L)$ by just a 
horizontal, potentially dashed line, instead of a box,
\begin{align}\label{eq:lines-for-morphisms}
\scalebox{\scalefactor}{
\begin{tikzpicture}[baseline=-\the\dimexpr\fontdimen22\textfont2\relax, line width=.1em]
\draw (0,.6) -- (0,-.6);
\draw (1,.6) -- (1,-.6);
\draw (0,0) -- (1,0);
\node at (1.25,0) {$F$};
\node at (0,-.85) {$L$};
\node at (1,-.85) {$L$};
\node at (0,.85) {$L$};
\node at (1,.85) {$L$};
\end{tikzpicture}} 
\ := \, 
\scalebox{\scalefactor}{
\begin{tikzpicture}[baseline=-\the\dimexpr\fontdimen22\textfont2\relax, line width=.1em]
\draw (0,.6) -- (0,-.6);
\draw (1,.6) -- (1,-.6);
\node[draw,fill=white] at (.5,0) {$\hspace{.9em}F\hspace{.9em}$};
\node at (0,-.85) {$L$};
\node at (1,-.85) {$L$};
\node at (0,.85) {$L$};
\node at (1,.85) {$L$};
\end{tikzpicture}} 
\ =: \, 
\scalebox{\scalefactor}{
\begin{tikzpicture}[baseline=-\the\dimexpr\fontdimen22\textfont2\relax, line width=.1em]
\draw (0,.6) -- (0,-.6);
\draw (1,.6) -- (1,-.6);
\draw[dashed] (0,0) -- (1,0);
\node at (1.25,0) {$F$};
\node at (0,-.85) {$L$};
\node at (1,-.85) {$L$};
\node at (0,.85) {$L$};
\node at (1,.85) {$L$};
\end{tikzpicture}} 
\quad .
\end{align}
We will often omit the label \enquote{$F$} at places, where 
we mean a previously fixed solution $F$ to the 
Pentagon equation in $\Cat$; 
in those cases, solid lines represent $F$, 
while dashed lines are reserved for its conjugate $F^*$. 
By naturality of the half-braiding, one has the following 
equality:
\begin{align}
\scalebox{\scalefactor}{
\begin{tikzpicture}[baseline=-\the\dimexpr\fontdimen22\textfont2\relax, line width=.1em, scale=.85]
\braid[border height=0cm,height=1cm,width=1cm] at (0,1) s_1 s_1^{-1};
\draw (2,1) -- (2,-1);
\draw (1,0) -- (2,0);
\end{tikzpicture}} 
\ &= \
\scalebox{\scalefactor}{
\begin{tikzpicture}[baseline=-\the\dimexpr\fontdimen22\textfont2\relax, line width=.1em, scale=.85]
\braid[border height=0cm,height=1cm,width=1cm] at (1,1) s_1^{-1} s_1;
\draw (0,1) -- (0,-1);
\draw (0,0) -- (1,0);
\end{tikzpicture}} \quad .
\end{align}
This we take as a justification to write, more symmetrically,
\begin{align}
\scalebox{\scalefactor}{
\begin{tikzpicture}[baseline=-\the\dimexpr\fontdimen22\textfont2\relax, line width=.1em, scale=.85]
\braid[border height=0cm,height=1cm,width=1cm] at (0,1) s_1 s_1^{-1};
\draw (2,1) -- (2,-1);
\draw (1,0) -- (2,0);
\end{tikzpicture}} 
\ &=: \
\scalebox{\scalefactor}{
\begin{tikzpicture}[baseline=-\the\dimexpr\fontdimen22\textfont2\relax, line width=.1em, scale=.85]
\draw (0,1) -- (0,-1);
\draw (1,1) -- (1,-1);
\draw (2,1) -- (2,-1);
\draw[color=white,line width=.7em] (.5,0) -- (1.5,0);
\draw (0,0) -- (2,0);
\end{tikzpicture}} \quad .
\end{align}
If $\Cat$ is a \emph{concrete} braided $W^*$-category 
and $H\in\Cat$ is an object with underlying Hilbert space $\H$, 
then we allow for diagrams involving triangles 
\scalebox{\scalefactor}{
\begin{tikzpicture}[baseline=-\the\dimexpr\fontdimen22\textfont2\relax+.125cm, line width=.1em]
\draw (0,0) -- (0,.5);
\ket{0}{0};
\node at (-.25,.35) {$H$};
\end{tikzpicture}} 
and 
\scalebox{\scalefactor}{
\begin{tikzpicture}[baseline=-\the\dimexpr\fontdimen22\textfont2\relax+.25cm, line width=.1em]
\draw (0,0) -- (0,.5);
\bra{0}{.5};
\node at (-.25,.15) {$H$};
\end{tikzpicture}}, 
which we use to denote the sets 
$\B(\C,\H) \cong \H$ 
and 
$\B(\H,\C)$, respectively. 
Note that diagrams involving such triangles 
do in general not represent (sets of) morphisms in $\Cat$, 
but just sets of bounded operators between Hilbert spaces 
underlying objects of $\Cat$. 
With these conventions we have, for example, 
\begin{align}
\left[ 
\scalebox{\scalefactor}{
\begin{tikzpicture}[baseline=-\the\dimexpr\fontdimen22\textfont2\relax, line width=.1em]
\draw (0,.25) -- (0,1);
\draw (0,-.25) -- (0,-1);
\node[draw,fill=white,minimum width=.0em,
	inner sep=.2em,
	regular polygon,regular polygon sides=3,
	shape border rotate=60,
	yscale=.8,label=center:] at (0,.25) {};
\node[draw,fill=white,minimum width=.0em,
	inner sep=.2em,
	regular polygon,regular polygon sides=3,
	shape border rotate=0,
	yscale=.8,label=center:] at (0,-.25) {};
\node at (.25,.83) {$K$};
\node at (.25,-.83) {$H$};
\end{tikzpicture}} 
\right] 
= \Kp(\H,\K) \ ,
\end{align}
if $H,K \in \Cat$ with underlying Hilbert spaces 
$\H$ and $\K$.
Here, the brackets are the notation introduced in 
Section \ref{sec:prelim-Hilb-sp-notation} 
for the closed linear span. 

We will repeatedly make use of the elementary fact that, if 
$X \in \U_\Cat(H_1 \ot K_1, H_2 \ot K_2)$ is a 
unitary, then the set 
$\{X(\eta \ot \xi)\,|\, \eta \in \H_1,\, \xi \in \K_1\}$ 
is dense in $\H_2 \ot \K_2$. This reads in 
our graphical notation 
\begin{align}\label{eq:remove-unitaries}
\left[ 
\scalebox{\scalefactor}{
\begin{tikzpicture}[baseline=-\the\dimexpr\fontdimen22\textfont2\relax-.1cm, line width=.1em]
\draw (0,-.5) -- (0,.5);
\draw (1,-.5) -- (1,.5);
\draw (0,0) -- (1,0);
\ket{0}{-.5};
\ket{1}{-.5};
\node[anchor=east,inner sep=.2em] at (0,.35) {$H_2$};
\node[anchor=west,inner sep=.2em] at (1,.35) {$K_2$};
\node[anchor=south,inner sep=.2em] at (.5,0) {$X$};
\node[anchor=east,inner sep=.5em] at (0,-.5) {$H_1$};
\node[anchor=west,inner sep=.5em] at (1,-.5) {$K_1$};
\end{tikzpicture}} 
\right] 
= 
\left[ 
\scalebox{\scalefactor}{
\begin{tikzpicture}[baseline=-\the\dimexpr\fontdimen22\textfont2\relax-.1cm, line width=.1em]
\draw (0,-.5) -- (0,.5);
\draw (1,-.5) -- (1,.5);
\ket{0}{-.5};
\ket{1}{-.5};
\node[anchor=east,inner sep=.2em] at (0,.35) {$H_1$};
\node[anchor=west,inner sep=.2em] at (1,.35) {$K_1$};
\end{tikzpicture}} 
\right]  \ .
\end{align}

\subsection{Braided multiplicative unitaries}
\label{ssec:br-mult-unitaries}

In the following we fix a strict monoidal unitarily braided 
$W^*$-category $(\Cat,\ot,\one,c)$. 

\medskip

A morphism $F \in \U_{\Cat}(L \ot L)$ is called 
\emph{multiplicative unitary \textnormal{(}in $\Cat$}), if 
it satisfies the \emph{Pentagon equation} 
\begin{align}\label{eq:br-pentagon-1}
\scalebox{\scalefactor}{
\begin{tikzpicture}[baseline=-\the\dimexpr\fontdimen22\textfont2\relax, line width=.1em]
\draw (0,.75) -- (0,-.75);
\draw (1,.75) -- (1,-.75);
\draw (2,.75) -- (2,-.75);
\draw (0,-.3) -- (1,-.3);
\draw (1,.3) -- (2,.3);
\node at (-.25,-.3) {$F$};
\node at (2.25,.3) {$F$};
\node at (0,-1) {$L$};
\node at (1,-1) {$L$};
\node at (2,-1) {$L$};
\end{tikzpicture}} 
\ \, = \
\scalebox{\scalefactor}{
\begin{tikzpicture}[baseline=-\the\dimexpr\fontdimen22\textfont2\relax, line width=.1em]
\draw (0,.75) -- (0,-.75);
\draw (1,.75) -- (1,-.75);
\draw (2,.75) -- (2,-.75);
\draw (0,.5) -- (1,.5);
\draw (1,-.5) -- (2,-.5);
\draw[color=white,line width=.7em] (.5,0) -- (1.5,0);
\draw (0,0) -- (2,0);
\node at (-.25,.5) {$F$};
\node at (2.25,0) {$F$};
\node at (2.25,-.5) {$F$};
\node at (0,-1) {$L$};
\node at (1,-1) {$L$};
\node at (2,-1) {$L$};
\end{tikzpicture}} 
\quad .
\end{align}
More explicitly this means 
$F_{23} F_{12} = F_{12} (c_{L,L})_{12} F_{23} (c^{-1}_{L,L})_{12} F_{23}$, 
using the \enquote{leg notation} 
$F_{12} := F \ot \id_L$ and 
$F_{23} := \id_L \ot F$.

The category of (right-)corepresentations of a multiplicative 
unitary $F \in \U_{\Cat}(L \ot L)$, denoted 
by $\Cat^F$, is the monoidal category given by 
\begin{itemize*}
\item objects: 
pairs $(H,U)$, where $H \in \Cat$ is an object and 
$U \in \U_\Cat(H \ot L)$ satisfies 
\begin{align}\label{eq:br-pentagon-2}
\scalebox{\scalefactor}{
\begin{tikzpicture}[baseline=-\the\dimexpr\fontdimen22\textfont2\relax, line width=.1em]
\draw (0,.75) -- (0,-.75);
\draw (1,.75) -- (1,-.75);
\draw (2,.75) -- (2,-.75);
\draw (0,-.3) -- (1,-.3);
\draw (1,.3) -- (2,.3);
\node at (-.25,-.3) {$U$};
\node at (2.25,.3) {$F$};
\node at (0,-1) {$H$};
\node at (1,-1) {$L$};
\node at (2,-1) {$L$};
\end{tikzpicture}} 
\ \, = \
\scalebox{\scalefactor}{
\begin{tikzpicture}[baseline=-\the\dimexpr\fontdimen22\textfont2\relax, line width=.1em]
\draw (0,.75) -- (0,-.75);
\draw (1,.75) -- (1,-.75);
\draw (2,.75) -- (2,-.75);
\draw (0,.5) -- (1,.5);
\draw (1,-.5) -- (2,-.5);
\draw[color=white,line width=.7em] (.5,0) -- (1.5,0);
\draw (0,0) -- (2,0);
\node at (-.25,.5) {$U$};
\node at (2.25,0) {$U$};
\node at (2.25,-.5) {$F$};
\node at (0,-1) {$H$};
\node at (1,-1) {$L$};
\node at (2,-1) {$L$};
\end{tikzpicture}} 
\quad .
\end{align}
\item morphisms $(H_1,U_1) \to (H_2,U_2)$: 
morphisms $f \in \Hom_\Cat(H_1,H_2)$ such that 
$(f \ot \id_L) \circ U_1 = U_2 \circ (f \ot \id_L)$. 
\item tensor product: 
for $(H_1,U_1), (H_2,U_2) \in \Cat^F$, 
\begin{align}
(H_1,U_1) \ot (H_2,U_2) &:= \biggl( H_1 \ot H_2, 
\scalebox{\scalefactor}{
\begin{tikzpicture}[baseline=-\the\dimexpr\fontdimen22\textfont2\relax-.4cm, line width=.1em]
\draw (0,.3) -- (0,-.75);
\draw (1,.3) -- (1,-.75);
\draw (2,.3) -- (2,-.75);
\draw (1,-.5) -- (2,-.5);
\draw[color=white,line width=.7em] (.5,0) -- (1.5,0);
\draw (0,0) -- (2,0);
\node at (2.3,0) {$U_1$};
\node at (2.3,-.5) {$U_2$};
\node at (0,-1) {$H_1$};
\node at (1,-1) {$H_2$};
\node at (2,-1) {$L$};
\end{tikzpicture}} 
\biggr) \ . 
\end{align}
\end{itemize*}
Similarly, one defines the category $_F\Cat$ of (left-)representations of $F$: 
\begin{itemize*}
\item objects: pairs $(H,U)$, where $H \in \Cat$ is an object and 
$U \in \U_\Cat(L \ot H)$ satisfies 
\begin{align}
\scalebox{\scalefactor}{
\begin{tikzpicture}[baseline=-\the\dimexpr\fontdimen22\textfont2\relax, line width=.1em]
\draw (0,.75) -- (0,-.75);
\draw (1,.75) -- (1,-.75);
\draw (2,.75) -- (2,-.75);
\draw (0,-.3) -- (1,-.3);
\draw (1,.3) -- (2,.3);
\node at (-.25,-.3) {$F$};
\node at (2.25,.3) {$U$};
\node at (0,-1) {$L$};
\node at (1,-1) {$L$};
\node at (2,-1) {$H$};
\end{tikzpicture}} 
\ \, = \
\scalebox{\scalefactor}{
\begin{tikzpicture}[baseline=-\the\dimexpr\fontdimen22\textfont2\relax, line width=.1em]
\draw (0,.75) -- (0,-.75);
\draw (1,.75) -- (1,-.75);
\draw (2,.75) -- (2,-.75);
\draw (0,.5) -- (1,.5);
\draw (1,-.5) -- (2,-.5);
\draw[color=white,line width=.7em] (.5,0) -- (1.5,0);
\draw (0,0) -- (2,0);
\node at (-.25,.5) {$F$};
\node at (2.25,0) {$U$};
\node at (2.25,-.5) {$U$};
\node at (0,-1) {$L$};
\node at (1,-1) {$L$};
\node at (2,-1) {$H$};
\end{tikzpicture}} 
\quad .
\end{align}
\item morphisms $(H_1,U_1) \to (H_2,U_2)$: 
morphisms $f \in \Hom_\Cat(H_1,H_2)$ such that 
$(\id_L \ot f) \circ U_1 = U_2 \circ (\id_L \ot f)$. 
\item tensor product: 
for $(H_1,U_1), (H_2,U_2) \in {}_F\Cat$, 
\begin{align}
(H_1,U_1) \ot (H_2,U_2) &:= \biggl( H_1 \ot H_2, 
\scalebox{\scalefactor}{
\begin{tikzpicture}[baseline=-\the\dimexpr\fontdimen22\textfont2\relax-.4cm, line width=.1em]
\draw (0,.3) -- (0,-.75);
\draw (1,.3) -- (1,-.75);
\draw (2,.3) -- (2,-.75);
\draw (0,.05) -- (1,.05);
\draw[color=white,line width=.7em] (.5,-.45) -- (1.5,-.45);
\draw (0,-.45) -- (2,-.45);
\node at (-.3,0) {$U_1$};
\node at (-.3,-.5) {$U_2$};
\node at (0,-1) {$L$};
\node at (1,-1) {$H_1$};
\node at (2,-1) {$H_2$};
\end{tikzpicture}} 
\biggr) \ . 
\end{align}
\end{itemize*}
The category $\FYDF$ of Yetter-Drinfeld modules 
of $F$ is given by
\begin{itemize*}
\item objects: triples $(H,U,V)$, where $(H,U) \in \Cat^F$ 
and $(H,V) \in {}_F\Cat$ satisfy 
\begin{align}
\scalebox{\scalefactor}{
\begin{tikzpicture}[baseline=-\the\dimexpr\fontdimen22\textfont2\relax, line width=.1em]
\draw (0,.75) -- (0,-.75);
\draw (1,.75) -- (1,-.75);
\draw (2,.75) -- (2,-.75);
\draw (0,.5) -- (1,.5);
\draw (1,-.5) -- (2,-.5);
\draw[color=white,line width=.7em] (.5,0) -- (1.5,0);
\draw (0,0) -- (2,0);
\node at (-.25,.5) {$V$};
\node at (2.25,0) {$F$};
\node at (2.25,-.5) {$U$};
\node at (0,-1) {$L$};
\node at (1,-1) {$H$};
\node at (2,-1) {$L$};
\end{tikzpicture}} 
\ \, = \
\scalebox{\scalefactor}{
\begin{tikzpicture}[baseline=-\the\dimexpr\fontdimen22\textfont2\relax, line width=.1em]
\draw (1,.5) -- (2,.5);
\draw (0,-.5) -- (1,-.5);
\draw (0,0) -- (2,0);
\draw (0,.75) -- (0,-.75);
\draw[color=white,line width=.7em] (1,.25) -- (1,-.25);
\draw (1,.75) -- (1,-.75);
\draw (2,.75) -- (2,-.75);
\node at (-.25,-.5) {$V$};
\node at (2.25,0) {$F$};
\node at (2.25,.5) {$U$};
\node at (0,-1) {$L$};
\node at (1,-1) {$H$};
\node at (2,-1) {$L$};
\end{tikzpicture}} 
\quad .
\end{align}
\item morphisms $(H_1,U_1,V_1) \to (H_2,U_2,V_2)$: 
morphisms $f \in \Hom_\Cat(H_1,H_2)$ such that 
$(f \ot \id_L) \circ U_1 = U_2 \circ (f \ot \id_L)$ 
and 
$(\id_L \ot f) \circ V_1 = V_2 \circ (\id_L \ot f)$ . 
\item tensor product: 
for $(H_1,U_1,V_1), (H_2,U_2,V_2) \in \FYDF$, 
\begin{align}
(H_1,U_1,V_1) \ot (H_2,U_2,V_2) &:= \biggl( H_1 \ot H_2, 
\scalebox{\scalefactor}{
\begin{tikzpicture}[baseline=-\the\dimexpr\fontdimen22\textfont2\relax-.4cm, line width=.1em]
\draw (0,.3) -- (0,-.75);
\draw (1,.3) -- (1,-.75);
\draw (2,.3) -- (2,-.75);
\draw (1,-.5) -- (2,-.5);
\draw[color=white,line width=.7em] (.5,0) -- (1.5,0);
\draw (0,0) -- (2,0);
\node at (2.3,0) {$U_1$};
\node at (2.3,-.5) {$U_2$};
\node at (0,-1) {$H_1$};
\node at (1,-1) {$H_2$};
\node at (2,-1) {$L$};
\end{tikzpicture}} , 
\scalebox{\scalefactor}{
\begin{tikzpicture}[baseline=-\the\dimexpr\fontdimen22\textfont2\relax-.4cm, line width=.1em]
\draw (0,.05) -- (2,.05);
\draw (0,-.45) -- (1,-.45);
\draw (0,.3) -- (0,-.75);
\draw[color=white,line width=.7em] (1,.3) -- (1,-.25);
\draw (1,.3) -- (1,-.75);
\draw (2,.3) -- (2,-.75);
\node[anchor=east,inner sep=.2em] at (0,0) {$V_2$};
\node[anchor=east,inner sep=.2em] at (0,-.5) {$V_1$};
\node at (0,-1) {$L$};
\node at (1,-1) {$H_1$};
\node at (2,-1) {$H_2$};
\end{tikzpicture}} 
\biggr) \ . 
\end{align}
\end{itemize*}
If $\Cat$ is a concrete braided $W^*$-category, then 
we call a multiplicative unitary $F \in \U_\Cat(L \ot L)$ 
\emph{good}, if for all $a \in \B(\L)$ 
-- where $\L$ is the Hilbert space underlying $L \in \Cat$ -- 
the implication 
\begin{align} \label{eqn:weaker-goodness-condition}
\left( 
\scalebox{\scalefactor}{
\begin{tikzpicture}[baseline=-\the\dimexpr\fontdimen22\textfont2\relax-.2cm, line width=.1em]
\draw (0,1) -- (0,-1);
\draw (1,1) -- (1,-1);
\draw (0,.6) -- (1,.6);
\draw[dashed] (0,-.6) -- (1,-.6);
\node[draw,fill=white] at (0,0) {$a$};
\node at (1.2,.6) {$F$};
\node at (1.3,-.6) {$F^*$};
\node at (0,-1.25) {$L$};
\node at (1,-1.25) {$L$};
\end{tikzpicture}} 
= 
\scalebox{\scalefactor}{
\begin{tikzpicture}[baseline=-\the\dimexpr\fontdimen22\textfont2\relax-.2cm, line width=.1em]
\braid[border height=.0cm,height=1cm,width=1cm] at (0,1) s_1^{-1} s_1;
\node[draw,fill=white] at (0.1,0) {$a$};
\node at (0,-1.25) {$L$};
\node at (1,-1.25) {$L$};
\end{tikzpicture}} 
\right) 
\quad \Longrightarrow \quad 
a \in \C \cdot \id_L \ 
\end{align}
holds true. 
In \eqref{eqn:weaker-goodness-condition} we have, for the first time, 
applied our convention on the usage 
of dashed lines explained below \eqref{eq:lines-for-morphisms}. 
If $F$ is good, then for all 
$(H,U) \in \Cat^F$ and $(K,V) \in {}_F\Cat$ there exists a unique unitary 
$V \ast U \in \U_\Cat(H \ot K)$ such that 
\begin{align}\label{eq:in-lem---good--indep-of-2nd-leg}
\scalebox{\scalefactor}{
\begin{tikzpicture}[baseline=-\the\dimexpr\fontdimen22\textfont2\relax, line width=.1em]
\draw (0,1) -- (0,-1);
\draw (1,1) -- (1,-1);
\draw (2,1) -- (2,-1);
\draw[color=white,line width=.7em] (.5,0) -- (1.5,0);
\draw (0,0) -- (2,0);
\node at (-.55,0) {$V \ast U$};
\node at (0,-1.25) {$H$};
\node at (1,-1.25) {$L$};
\node at (2,-1.25) {$K$};
\end{tikzpicture}} 
\ = 
\scalebox{\scalefactor}{
\begin{tikzpicture}[baseline=-\the\dimexpr\fontdimen22\textfont2\relax, line width=.1em]
\draw (0,1) -- (0,-1);
\draw (1,1) -- (1,-1);
\draw (2,1) -- (2,-1);
\draw[dashed] (1,-.6) -- (2,-.6);
\draw (0,-.2) -- (1,-.2);
\draw (1,.2) -- (2,.2);
\draw[dashed] (0,.6) -- (1,.6);
\node at (-.3,.6) {$U^*$};
\node at (-.25,-.2) {$U$};
\node at (2.25,.2) {$V$};
\node at (2.3,-.6) {$V^*$};
\node at (0,-1.25) {$H$};
\node at (1,-1.25) {$L$};
\node at (2,-1.25) {$K$};
\end{tikzpicture}}  \ ,
\end{align}
cf.\@ the proof of \cite{Meyer-Roy-Woronowicz:Homomorphisms}*{Lem.\,3.11} for the special case that 
$(\Cat,\ot,c) = (\Hilb,\ot,\Sigma)$; 
the general case will be presented in a separate paper. 
There, it will also be shown:
If $F$ is a good multiplicative unitary in $\Cat$, 
then $\FYDF$ is braided with braiding $\Phi$ given by 
$\Phi_{H,K} := c_{K,H}^{-1}\circ(V_K \ast U_H)$, 
for objects $(H,U_H,V_H), (K,U_K,V_K) \in \FYDF$.

\subsection{Monoidal categories of $C^*$-algebras and braided $C^*$-bialgebras}

Given a $C^*$-algebra $A$, we denote its multiplier 
algebra by $\Malg(A)$. 
We denote by $\Calg$ the category whose objects are 
$C^*$-algebras and whose morphisms $f \in \Hom_\Calg(A, B)$ are 
non-degenerate $^*$-homomorphisms $f: A \to \Malg(B)$. 
By non-degeneracy, such a $^*$-homomorphism extends uniquely 
to a $^*$-homomorphism $\Malg(A) \to \Malg(B)$. Hence, 
morphisms in $\Calg$ are indeed composable. 
We refer to, e.g., \cite{Woronowicz:Unbounded_affiliated} for further details.

Let $A,B,C$ be $C^*$-algebras and $\alpha \in \Hom_\Calg(A,C)$ 
and $\beta \in \Hom_\Calg(B,C)$ be morphisms. 
We say that $C$ is a 
\emph{crossed product of $A,B$ with inclusions $\alpha,\beta$}, if 
$C = [\alpha(A) \cdot \beta(B)] \subseteq \Malg(C)$, 
cf.\@ \cite{Meyer-Roy-Woronowicz:Twisted_tensor}*{Def.\,2.11}.

\begin{df}\label{df:monoidal-cat-of-C*-algebras}
A \emph{monoidal category of $C^*$-algebras} is a 
monoidal category $(\Cat,\bt,\one)$ together with 
a faithful functor $\algA: \Cat \to \Calg$ and 
natural transformations 
$\iota_1 : \Pr_1 \to \bt$ and 
$\iota_2 : \Pr_2 \to \bt$, 
where $\Pr_1,\Pr_2$ denote the functors 
$\Cat \times \Cat \to \Cat$ projecting on the 
first, respectively, second component, 
subject to:
\begin{enumerate}
\item 
for all $A,B \in \Cat$ 
we have that $\algA(A \bt B)$ forms a 
crossed product of $\algA(A)$ and $\algA(B)$ 
with inclusions $\algA(\iota_{1;A,B})$ and 
$\algA(\iota_{2;A,B})$. 

\item 
for all $A,B,C \in \Cat$ we have 
\begin{gather*}
\iota_{1;A \bt B,C} \circ \iota_{1;A,B} = \iota_{1;A,B \bt C} \ , \qquad
\iota_{1;A \bt B,C} \circ \iota_{2;A,B} = \iota_{2;A,B \bt C} \circ \iota_{1;B,C} \ , \\
\iota_{2;A \bt B,C} = \iota_{2;A,B \bt C} \circ \iota_{2;B,C} \ .
\end{gather*}
\end{enumerate}
\end{df}

An example of a monoidal category of $C^*$-algebras 
is $\Calg$ itself ($\algA$ being the identity), 
with the minimal $C^*$-tensor product 
$\ot_\mathrm{min}$ and the natural transformations 
$\iota_{1;A,B}: A \to A \ot_\mathrm{min} B$ and 
$\iota_{2;A,B}: B \to A \ot_\mathrm{min} B$ 
given by $\iota_{1;A,B}(a) := a \ot_\mathrm{min} 1_B$ and 
$\iota_{2;A,B}(b) := 1_A \ot_\mathrm{min} b$, 
where $1_B$ and $1_A$ are the units of the 
multiplier algebras $\Malg(A)$, respectively, $\Malg(B)$.

In the following, we will usually suppress the functor $\algA$, 
thinking of a monoidal category of $C^*$-algebras as a 
(non-full) subcategory of $\Calg$, equipped with 
a tensor product. 
Instead of $\algA(\iota_{1;A,B})(a)$ and 
$\algA(\iota_{2;A,B})(b)$, for $a \in A$ and $b \in B$, 
we also write $a \bt 1_B$, respectively, $1_A \bt b$. 

\begin{df}\label{df:braided-C*-bialgebra}
Let $(\Cat,\bt,\one)$ be a monoidal category of $C^*$-algebras. 
A (\emph{braided\textnormal{)} $C^*$-bialgebra} in $\Cat$ is a coalgebra 
$(A,\Delta)$ in $\Cat$, i.e.\@ $A \in \Cat$ and 
$\Delta \in \Hom_\Cat(A,A \bt A)$ such that 
$(\Delta \bt \id)\circ \Delta = (\id \bt \Delta)\circ \Delta$. 
A $C^*$-bialgebra $(A,\Delta)$ in $\Cat$ is called 
\emph{bi-simplifiable} if the \emph{Podle{\'s} conditions} 
$A \bt A = [\Delta(A) \cdot (A \bt 1_A)]$ and 
$A \bt A = [\Delta(A) \cdot (1_A \bt A)]$ hold.
\end{df}

\section{From braided multiplicative unitaries to braided $C^*$-bialgebras}

\subsection{From semi-regularity to $C^*$-algebras}
\label{ssec:to-C*-algebras}

\begin{df}\label{df:regularity-for-braiding}
A braiding $c$ on a concrete monoidal 
$W^*$-category $(\Cat,\ot,\one)$ is said to be 
\emph{semi-regular}, if it is unitary and 
\begin{align}\label{eq:semi-regularity-for-braiding}
\Kp(\H,\K) 
= \left[ 
\scalebox{\scalefactor}{
\begin{tikzpicture}[baseline=-\the\dimexpr\fontdimen22\textfont2\relax, line width=.1em]
\draw (0,.25) -- (0,1);
\draw (0,-.25) -- (0,-1);
\node[draw,fill=white,minimum width=.0em,
	inner sep=.2em,
	regular polygon,regular polygon sides=3,
	shape border rotate=60,
	yscale=.8,label=center:] at (0,.25) {};
\node[draw,fill=white,minimum width=.0em,
	inner sep=.2em,
	regular polygon,regular polygon sides=3,
	shape border rotate=0,
	yscale=.8,label=center:] at (0,-.25) {};
\node at (.25,.83) {$K$};
\node at (.25,-.83) {$H$};
\end{tikzpicture}} 
\right] 
\subseteq 
\left[ 
\scalebox{\scalefactor}{
\begin{tikzpicture}[baseline=-\the\dimexpr\fontdimen22\textfont2\relax, line width=.1em]
\draw (0,.25) -- (0,1);
\draw (0,-.25) -- (0,-1);
\braid[height=.5cm,width=.5cm] at (0,.5cm) s_1^{-1};
\node[draw,fill=white,minimum width=.0em,
	inner sep=.2em,
	regular polygon,regular polygon sides=3,
	shape border rotate=0,
	yscale=.8,label=center:] at (.5,.4) {};
\node[draw,fill=white,minimum width=.0em,
	inner sep=.2em,
	regular polygon,regular polygon sides=3,
	shape border rotate=60,
	yscale=.8,label=center:] at (.5,-.4) {};
\node at (.25,.83) {$K$};
\node at (.25,-.83) {$H$};
\end{tikzpicture}} 
\right] 
= [(\id \ot \omega)(c_{H,K})\,|\, \omega \in \B(\K,\H)_*]
\end{align}
holds for all $H,K \in \Cat$. 
It is said to be \emph{regular}, if one has 
an equality in 
\eqref{eq:semi-regularity-for-braiding}.
\end{df}

\begin{rmk}\phantomsection\label{rmk:regular-braiding}
\begin{enumerate}
\item 
The category $\Hilb$ is regularly braided by 
the tensor flip $\Sigma$. We will see in 
Proposition \ref{prop:YD-cats-of-good-regular-MUs-have-regular-braiding} 
below that more examples of regularly braided categories 
are provided by Yetter-Drinfeld categories of 
regular multiplicative unitaries. 
\item 
If $\Cat$ has a semi-regular unitary braiding $c$, then, 
by taking adjoints, we see that 
\begin{align}
\Kp(\H,\K) 
\subseteq \left[ 
\scalebox{\scalefactor}{
\begin{tikzpicture}[baseline=-\the\dimexpr\fontdimen22\textfont2\relax, line width=.1em]
\draw (0,.25) -- (0,1);
\draw (0,-.25) -- (0,-1);
\braid[height=.5cm,width=.5cm] at (0,.5cm) s_1;
\node[draw,fill=white,minimum width=.0em,
	inner sep=.2em,
	regular polygon,regular polygon sides=3,
	shape border rotate=0,
	yscale=.8,label=center:] at (.5,.4) {};
\node[draw,fill=white,minimum width=.0em,
	inner sep=.2em,
	regular polygon,regular polygon sides=3,
	shape border rotate=60,
	yscale=.8,label=center:] at (.5,-.4) {};
\node at (.25,.83) {$K$};
\node at (.25,-.83) {$H$};
\end{tikzpicture}} 
\right] \ .
\end{align}
It follows that $\Cat^\mathrm{rev}$, which is the same 
as $\Cat$ as a monoidal category, but braided by the 
inverse $(c_{K,H}^{-1})_{H,K\in\Cat}$ of $c$, is 
semi-regularly braided, too. 
An analogous statement holds for regularity instead of 
semi-regularity.
\end{enumerate}
\end{rmk}

\begin{prop}\label{prop:F-slices-are-non-deg-algebras}
Let $\Cat$ be a semi-regularly braided concrete monoidal $W^*$-category, 
and let $L \in \Cat$ be an object with underlying 
Hilbert space $\L$. 
Let $F \in \U_{\Cat}(L \ot L)$ be a multiplicative 
unitary. 
Define 
\begin{align}
\hatA(F) := [(\id \ot \omega)(F)\,|\, \omega \in \B(\L)_*] \ . 
\end{align}
Then: 
\begin{enumerate}
\item $\hatA(F) \subseteq \B(\L)$ is a subalgebra.
\item $[\hatA(F)\L] = \L$, i.e.\@ $\hatA(F)$ acts non-degenerately. 
\end{enumerate}
\end{prop}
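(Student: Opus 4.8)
The plan is to settle (2) immediately and to reduce (1) to the Pentagon equation by the usual ``product of two slices is again a slice'' trick, the subtle point being the braiding operators that the Pentagon inserts. For (2): every $\omega\in\B(\L)_*$ is a norm limit of sums of vector functionals $\omega_{\xi,\eta}:=\langle\xi,\,\cdot\,\eta\rangle$, and $(\id\ot\omega_{\xi,\eta})(F)\,\zeta=(\id\ot\langle\xi|)\bigl(F(\zeta\ot\eta)\bigr)$; since $F$ is unitary the vectors $F(\zeta\ot\eta)$ are total in $\L\ot\L$ (this is \eqref{eq:remove-unitaries}), and $[(\id\ot\langle\xi|)v:\xi\in\L,\ v\in\L\ot\L]=\L$, whence $[\hatA(F)\L]=\L$.

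For (1), $\hatA(F)$ is norm-closed by construction, so it suffices to prove $\hatA(F)\cdot\hatA(F)\subseteq\hatA(F)$, which I would do in three steps. \emph{Step 1.} Realising both slices on a single auxiliary copy of $\L$, the slice-map calculus gives $(\id\ot\omega_{\xi_1,\eta_1})(F)\,(\id\ot\omega_{\xi_2,\eta_2})(F)=(\id\ot\omega_{\xi_1,\eta_2})\bigl(F(\id_\L\ot k)F\bigr)$ with $k:=|\eta_1\rangle\langle\xi_2|$, so that $\hatA(F)\cdot\hatA(F)=\bigl[(\id\ot\omega)\bigl(F(\id_\L\ot k)F\bigr):\omega\in\B(\L)_*,\ k\in\Kp(\L)\bigr]$. \emph{Step 2.} By semi-regularity of $c$ in its adjoint form (Remark~\ref{rmk:regular-braiding}(2)), $\Kp(\L)\subseteq[(\id\ot\theta)(c^{-1}_{L,L}):\theta\in\B(\L)_*]$; as $k\mapsto(\id\ot\omega)\bigl(F(\id_\L\ot k)F\bigr)$ is norm-continuous and $\hatA(F)$ is closed, it is enough to treat $k=(\id\ot\theta)(c^{-1}_{L,L})$, for which $(\id\ot\omega)\bigl(F(\id_\L\ot k)F\bigr)=(\id\ot\omega\ot\theta)\bigl(F_{12}\,(c^{-1}_{L,L})_{23}\,F_{12}\bigr)$.

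\emph{Step 3} (the main obstacle). Here the braiding must be cleared out with the help of the Pentagon. Write $\sigma:=(c_{L,L})_{12}$ and $\tau:=(c_{L,L})_{23}$. Naturality of the half-braiding $c_{L,L\ot L}=\tau\sigma$ applied to the morphism $F$ gives $\tau\sigma F_{23}=F_{12}\tau\sigma$, hence $\tau^{-1}F_{12}=\sigma F_{23}\sigma^{-1}\tau^{-1}$; and the Pentagon equation $F_{23}F_{12}=F_{12}\sigma F_{23}\sigma^{-1}F_{23}$ rearranges to $F_{12}\sigma F_{23}=F_{23}F_{12}F_{23}^{*}\sigma$. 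Combining,
\begin{equation*}
F_{12}\,(c^{-1}_{L,L})_{23}\,F_{12}=F_{12}\,\tau^{-1}F_{12}=(F_{12}\sigma F_{23})\,\sigma^{-1}\tau^{-1}=F_{23}\,F_{12}\,F_{23}^{*}\,(c^{-1}_{L,L})_{23}.
\end{equation*}
Every factor except the central $F_{12}$ now lives on legs $2,3$, so $F_{23}F_{12}F_{23}^{*}(c^{-1}_{L,L})_{23}=(\id_\L\ot\Psi)(F)$ for the normal map $\Psi\colon\B(\L)\to\B(\L\ot\L)$, $\Psi(x)=F(x\ot\id_\L)F^{*}c^{-1}_{L,L}$; slicing legs $2,3$ then yields $(\id\ot\omega\ot\theta)\bigl(F_{12}(c^{-1}_{L,L})_{23}F_{12}\bigr)=(\id\ot\mu)(F)\in\hatA(F)$ with $\mu:=(\omega\ot\theta)\circ\Psi\in\B(\L)_*$. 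Unwinding Steps~2 and~1 gives $\hatA(F)\cdot\hatA(F)\subseteq\hatA(F)$.

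I expect Step~3 to be the only place where the braided case genuinely differs from the non-braided one: there the Pentagon directly yields $F_{12}F_{13}=F_{23}F_{12}F_{23}^{*}$, whereas here it only relates $F_{12}$ to the ``braided'' third leg $(c_{L,L})_{12}F_{23}(c^{-1}_{L,L})_{12}F_{23}$, so one must choose the $c^{-1}$-version of semi-regularity in Step~2 and route it through the naturality of $c_{L,L\ot L}$ in precisely the way above for the stray braidings to cancel; the rest is bookkeeping with slice maps.
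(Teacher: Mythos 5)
Your proof is correct and follows essentially the same route as the paper's: write a product of two slices as a slice of $F(\id_\L\ot k)F$ with $k$ compact, use (the adjoint form of) semi-regularity of the braiding to replace $k$ by slices of $c_{L,L}^{\pm1}$, and then combine naturality of the braiding with the Pentagon equation to recognise the result as a single slice of $F$; part (2) is the standard Baaj--Skandalis argument in both cases. The paper performs the middle step graphically and finishes via \eqref{eq:remove-unitaries}, whereas you absorb the tail into the normal functional $\mu=(\omega\ot\theta)\circ\Psi$ --- a purely cosmetic difference.
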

\begin{proof}
\emph{Part 1:} We have
\begin{align}\label{eq:hat-A-is-an-algebra}
[\hatA(F) \hatA(F)] 
&= 
\left[ 
\scalebox{\scalefactor}{
\begin{tikzpicture}[baseline=-\the\dimexpr\fontdimen22\textfont2\relax+1.2cm, line width=.1em]
\draw (0,-.25) -- (0,2.75);
\draw (1,0) -- (1,1);
\draw (1,1.5) -- (1,2.5);
\draw (0,.5) -- (1,.5);
\draw (0,2) -- (1,2);
\bra{1}{2.5};
\ket{1}{1.5};
\bra{1}{1};
\ket{1}{0};
\node at (.25,-.05) {$L$};
\node at (.25,2.55) {$L$};
\node at (-.25,.5) {$F$};
\node at (-.25,2) {$F$};
\end{tikzpicture}} 
\right]
\stackrel{\eqref{eq:semi-regularity-for-braiding}}\subseteq 
\left[ 
\scalebox{\scalefactor}{
\begin{tikzpicture}[baseline=-\the\dimexpr\fontdimen22\textfont2\relax+1.2cm, line width=.1em]
\draw (0,-.25) -- (0,2.75);
\draw (1,0) -- (1,1);
\draw (1,1.5) -- (1,2.5);
\draw (0,.5) -- (1,.5);
\draw (0,2) -- (1,2);
\braid[border height=.15cm,height=.5cm,width=.5cm] at (1,1.65) s_1^{-1};
\bra{1}{2.5};
\bra{1.5}{1.65};
\ket{1.5}{.85};
\ket{1}{0};
\end{tikzpicture}} 
\right]
= 
\left[ 
\scalebox{\scalefactor}{
\begin{tikzpicture}[baseline=-\the\dimexpr\fontdimen22\textfont2\relax+1.1cm, line width=.1em]
\draw (0,-.25) -- (0,2.5);
\draw (1,1) -- (1,2.25);
\draw (2,1) -- (2,1.75);
\draw[color=white, line width=.7em] (.5,1.25) -- (1.5,1.25);
\draw (0,1.25) -- (2,1.25);
\draw (0,1.75) -- (1,1.75);
\braid[border height=.0cm,height=1cm,width=1cm] at (1,1) s_1^{-1};
\bra{1}{2.25};
\bra{2}{1.75};
\ket{1}{0};
\ket{2}{0};
\end{tikzpicture}} 
\right]
\stackrel{\eqref{eq:br-pentagon-1}}= 
\left[ 
\scalebox{\scalefactor}{
\begin{tikzpicture}[baseline=-\the\dimexpr\fontdimen22\textfont2\relax+1.1cm, line width=.1em]
\draw (0,-.25) -- (0,2.5);
\draw (1,1) -- (1,2);
\draw (2,1) -- (2,2);
\draw[dashed] (1,1) -- (2,1);
\draw (0,1.25) -- (1,1.25);
\draw (1,1.5) -- (2,1.5);
\braid[border height=.0cm,height=1cm,width=1cm] at (1,1) s_1^{-1};
\bra{1}{2};
\bra{2}{2};
\ket{1}{0};
\ket{2}{0};
\end{tikzpicture}} 
\right]
\nonumber\\
&\stackrel{\eqref{eq:remove-unitaries}}= 
\hatA(F) \ .
\end{align}
Let us make the above graphical calculation more explicit at this point, 
since this is the first time we use our notation in a proof. 
Also, we use the opportunity to spell out the coherence 
isomorphisms. 
Denote by $\alpha^\Cat_{X,Y,Z}$, 
$\alpha^\Hilb_{X,Y,Z}$ the associators, 
and by $\lambda^\Cat_X$, 
$\rho^\Cat_X$, $\lambda^\Hilb_X$, $\rho^\Hilb_X$ 
the unit isomorphisms of $\Cat$ and, respectively, $\Hilb$. 
Moreover, there are the coherence isomorphisms 
$f_{X,Y}: \For(X) \ot_\Hilb \For(Y) \to \For(X \ot_\Cat Y)$ 
and $f: \one_\Hilb \to \For(\one_\Cat)$ 
of the strong monoidal functor $\For$. 
The first picture in \eqref{eq:hat-A-is-an-algebra} 
translates into 
\begin{align}
&\bigl[ \rho^\Hilb_\L \circ (\id \ot \braL) \circ f_{L,L}^{-1} 
	\circ \For(F) \circ f_{L,L} \circ (\id_\L \ot \ketL) \circ (\rho^\Hilb_\L)^{-1} \\
&\qquad \circ \rho^\Hilb_\L \circ (\id \ot \braL) \circ f_{L,L}^{-1} 
	\circ \For(F) \circ f_{L,L} \circ (\id_\L \ot \ketL) \circ (\rho^\Hilb_\L)^{-1} \bigr] 
	\nonumber\\
&= \bigl[ \rho^\Hilb_\L \circ (\id \ot \braL) \circ f_{L,L}^{-1} 
	\circ \For(F) \circ f_{L,L} \circ (\id_\L \ot \Kp(\L)) \nonumber\\
&\qquad \circ f_{L,L}^{-1} 
	\circ \For(F) \circ f_{L,L} \circ (\id_\L \ot \ketL) \circ (\rho^\Hilb_\L)^{-1} \bigr] \ ,
	\nonumber
\end{align}
where we put 
$|\L\rangle := \B(\C,\L)$ and $\langle\L| := \B(\L,\C)$. 
Using \eqref{eq:semi-regularity-for-braiding}, which we 
write as 
$\K(\L) = \bigl[ \rho^\Hilb_\L \circ (\id_\L \ot \braL) 
	 \circ f_{L,L}^{-1} \circ \For(c_{L,L}) \circ f_{L,L} 
	 \circ (\id_\L \ot \ketL) \circ (\rho^\Hilb_\L)^{-1} \bigr]$, 
we arrive at 
\begin{align}
&\bigl[ \rho^\Hilb_\L \circ (\id \ot \braL) \circ f_{L,L}^{-1} 
	\circ \For(F) \circ f_{L,L} \\
&\qquad \circ (\id_\L \ot \rho^\Hilb_\L) \circ (\id_\L \ot (\id_\L \ot \braL)) \nonumber\\
&\qquad\qquad \circ (\id_\L \ot f_{L,L}^{-1}) \circ (\id_\L \ot \For(c_{L,L}))
	\circ (\id_\L \ot f_{L,L}) \nonumber\\
&\qquad\qquad \circ (\id_\L \ot (\id_\L \ot \ketL)) 
	\circ (\id_\L \ot (\rho^\Hilb_\L)^{-1}) \nonumber\\
&\qquad \circ f_{L,L}^{-1} 
	\circ \For(F) \circ f_{L,L} \circ (\id_\L \ot \ketL) \circ (\rho^\Hilb_\L)^{-1} \bigr] \ ,
	\nonumber
\end{align}
represented by the second picture in \eqref{eq:hat-A-is-an-algebra}. 
To prepare for the application of the Pentagon equation 
\eqref{eq:br-pentagon-1}, we \enquote{pull} several 
maps into the argument of the fibre functor $\For$. 
First, we use the coherence property 
$\alpha^\Hilb_{\L,\L,\one} \circ (\id_\L \ot \rho^\Hilb_\L) 
	= \rho^\Hilb_{\L \ot \L}$ 
to obtain 
\begin{align}
&\bigl[ \rho^\Hilb_\L \circ (\id_\L \ot \rho^\Hilb_\L) 
	\circ (\id \ot (\braL \ot \braL)) \circ \alpha^\Hilb_{\L,\L,\L} 
		\\
&\qquad \circ (f_{L,L}^{-1} \ot \id_\L) 
	\circ (\For(F) \ot \id_\L) \circ (f_{L,L} \ot \id_\L)
	\circ (\alpha^\Hilb_{\L,\L,\L})^{-1} \nonumber\\
&\qquad \circ (\id_\L \ot f_{L,L}^{-1}) \circ (\id_\L \ot \For(c_{L,L}))
	\circ (\id_\L \ot f_{L,L}) \circ \alpha^\Hilb_{\L,\L,\L} \nonumber\\
&\qquad \circ (f_{L,L}^{-1} \ot \id_\L) 
	\circ (\For(F) \ot \id_\L) \circ (f_{L,L} \ot \id_\L)
	\circ (\alpha^\Hilb_{\L,\L,\L})^{-1} \nonumber\\
&\qquad \circ (\id_\L \ot (\ketL \ot \ketL)) \circ (\id_\L \ot (\rho^\Hilb_\L)^{-1}) 
	\circ (\rho^\Hilb_\L)^{-1} \bigr] \ .
	\nonumber
\end{align}
Next, we employ the coherence property 
$f_{L,L\ot L} \circ (\id_\L \ot f_{L,L}) \circ \alpha^\Hilb_{\L,\L,\L}
	= \For(\alpha^\Cat_{L,L,L}) \circ f_{L \ot L,L} \circ (f_{L,L} \ot \id_\L)$, 
and naturality of $f_{X,Y}$, to get 
\begingroup
\allowdisplaybreaks
\begin{align}
&\bigl[ \rho^\Hilb_\L \circ (\id_\L \ot \rho^\Hilb_\L) 
	\circ (\id \ot (\braL \ot \braL)) 
	\circ (\id_\L \ot f_{L,L}^{-1}) \circ f_{L,L \ot L}^{-1} \\
&\quad \circ \For(\alpha^\Cat_{L,L,L} \circ (F \ot \id_L) \circ (\alpha^\Cat_{L,L,L})^{-1}
	\circ (\id_L \ot c_{L,L}) ) \nonumber\\
&\quad \circ \For(\alpha^\Cat_{L,L,L} \circ (F \ot \id_L) \circ (\alpha^\Cat_{L,L,L})^{-1}) 
	\nonumber\\
&\quad \circ f_{L,L \ot L} \circ (\id_\L \ot f_{L,L}) 
	\circ (\id_\L \ot (\ketL \ot \ketL)) \circ (\id_\L \ot (\rho^\Hilb_\L)^{-1}) 
	\circ (\rho^\Hilb_\L)^{-1} \bigr] \ . 
	\nonumber
\end{align}
\endgroup
Now we can apply the Pentagon equation in $\Cat$ 
to arrive at the desired result.

\medskip

\emph{Part 2:} 
The proof is the same as in, e.g.\@ \cite{Baaj-Skandalis:Unitaires}*{Prop.\,1.4}:
Let $\xi,\eta\in\L\setminus\{0\}$. 
Since $F^*(\xi \ot \eta) \neq 0$, there exist $\alpha,\beta\in\L$ 
such that $\skapro{\xi\ot\eta, F(\alpha\ot\beta)} \neq 0$. 
Hence, $(\id \ot \omega_{\eta,\beta})(F)\alpha$ 
is not orthogonal to $\xi$, where $\omega_{\eta,\beta} \in \B(\L)_*$ 
denotes the functional $\omega_{\eta,\beta}(x) = \skapro{\eta, x\beta}$.
\end{proof}

If $F$ is a multiplicative unitary in $\Cat$, then 
$\hat F := c_{L,L}^{-1} F^* c_{L,L}$ is a 
multiplicative unitary in $\Cat^\mathrm{rev}$. 
The previous proposition shows that also $\hatA(\hat F)$ 
is a non-degenerate algebra. 

\begin{df}
Let $\Cat$ be a unitarily braided concrete monoidal $W^*$-category, 
and let $L \in \Cat$ be an object. 
A multiplicative unitary $F \in \U_{\Cat}(L \ot L)$ 
is called \emph{semi-regular}, if the vector space 
\begin{align}
C(F) &:= [(\id \ot \omega)(c_{L,L}^{-1} \circ F)\,|\, \omega \in \B(\L)_*] \ 
\end{align}
contains the space $\Kp(\L)$. 
It is called \emph{regular}, if $C(F)$ 
is equal to $\Kp(\L)$. 
\end{df}

\begin{rmk}\phantomsection\label{rmk:regular-MU}
\begin{enumerate}
\item 
Since we have 
\begin{align}
C(\hat F) &= [(\id \ot \omega)(c_{L,L} \hat F)\,|\, \omega \in \B(\L)_*]
	= [(\id \ot \omega)(F^* c_{L,L})\,|\, \omega \in \B(\L)_*]
	\nonumber\\
&= [(\id \ot \omega)(c_{L,L}^{-1} F)\,|\, \omega \in \B(\L)_*]^*
	= C(F)^* \ ,
\end{align}
a multiplicative unitary $F$ is (semi-)regular, if and only 
if $\hat F$ is (semi-)regular. 
\item 
The condition \eqref{eqn:weaker-goodness-condition} 
for $F$ to be good is equivalent to $C(F) \subseteq \B(\L)$ 
having trivial commutant. 
In particular, semi-regular multiplicative unitaries 
are good. 
Hence, Yetter-Drinfeld categories $\FYDF$ of semi-regular 
multiplicative unitaries $F$ are braided.
\end{enumerate}
\end{rmk}

\begin{lem}
Let $\Cat$ be a semi-regularly braided concrete monoidal $W^*$-category, 
let $L \in \Cat$ be an object and let 
$F \in \U_{\Cat}(L \ot L)$ be a multiplicative unitary. 
Then $C(F) \subseteq \B(\L)$ is a subalgebra. 
If, in addition, $F$ is semi-regular, then $C(F)$ is a $C^*$-algebra.
\end{lem}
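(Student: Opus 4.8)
The plan is to adapt, essentially verbatim, the diagram chase of Proposition~\ref{prop:F-slices-are-non-deg-algebras}(1). Write $G := c_{L,L}^{-1}\circ F\in\U_\Cat(L\ot L)$, so that $C(F)=[(\id\ot\omega)(G)\mid\omega\in\B(\L)_*]$ and, composing slices along a shared leg and spanning, $[C(F)\,C(F)]=[(\id\ot\omega_1\ot\omega_2)(G_{12}G_{13})\mid\omega_1,\omega_2\in\B(\L)_*]$. To prove $C(F)$ is a subalgebra it suffices to show $[C(F)\,C(F)]\subseteq C(F)$. Drawing the product $(\id\ot\omega_1)(G)\cdot(\id\ot\omega_2)(G)$ as two copies of $G$ stacked along the surviving strand, the ``inner'' piece between them (a bra followed by a ket on the strand being re-used) ranges over a set whose closed span is $\Kp(\L)$, exactly as in the explicit version of \eqref{eq:hat-A-is-an-algebra}. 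Semi-regularity of the braiding (Definition~\ref{df:regularity-for-braiding}) rewrites this $\Kp(\L)$ as the closed span of slices of $c_{L,L}$, at the cost of one fresh strand; commuting the new triangles past the $F$-lines by naturality of $c$ and then applying the braided Pentagon equation \eqref{eq:br-pentagon-1} should collapse the diagram down to a slice of $G$ pre-/post-composed with a unitary on two strands, which is then stripped off by \eqref{eq:remove-unitaries}. Norm-closedness is built into the definition of $C(F)$.

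The step I expect to fight with in the first part is the braided bookkeeping: unlike in \eqref{eq:hat-A-is-an-algebra}, each factor $c_{L,L}^{-1}F$ drags an extra half-braiding through the diagram, so one must first transport these braidings into position before \eqref{eq:br-pentagon-1} becomes applicable. The cleanest route is probably to record, once and for all, the ``twisted Pentagon equation'' obeyed by $G$ --- obtained by substituting $F=c_{L,L}\circ G$ into \eqref{eq:br-pentagon-1} and simplifying using naturality of $c$ --- and then run the chase mechanically from it.

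For the second assertion, by the first part $C(F)$ is already a norm-closed subalgebra of $\B(\L)$, so it remains to see that $C(F)=C(F)^*$. By Remark~\ref{rmk:regular-MU} we have $C(F)^*=C(\hat F)$, where $\hat F=c_{L,L}^{-1}F^*c_{L,L}$ is a multiplicative unitary in the category $\Cat^{\mathrm{rev}}$, which is semi-regularly braided by Remark~\ref{rmk:regular-braiding}; moreover $\hat F$ is again semi-regular (Remark~\ref{rmk:regular-MU}), so $\Kp(\L)\subseteq C(F)\cap C(\hat F)$. I would then establish $C(\hat F)\subseteq C(F)$ by a computation in the spirit of \eqref{eq:hat-A-is-an-algebra}: expand a generic slice of $F^*c_{L,L}$, insert --- using the inclusion $\Kp(\L)\subseteq C(F)$ --- a slice of $c_{L,L}^{-1}F$, and absorb everything into a single slice of $c_{L,L}^{-1}F$ via semi-regularity of the braiding together with \eqref{eq:br-pentagon-1}; the reverse inclusion then follows by taking adjoints. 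This yields $C(F)=C(F)^*$, so $C(F)$ is a norm-closed $*$-subalgebra of $\B(\L)$, hence a $C^*$-algebra. The main obstacle here is to pin down precisely where the hypothesis ``$F$ semi-regular'' (as opposed to just ``$c$ semi-regular'') enters: it is exactly the inclusion $\Kp(\L)\subseteq C(F)$ that makes the extra strand produced by semi-regularity of the braiding re-absorbable as an honest slice of $c_{L,L}^{-1}F$, rather than leaving behind a remainder lying outside $C(F)$.
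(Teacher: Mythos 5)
Your treatment of the first assertion is essentially the paper's own proof: compose two slices of $c_{L,L}^{-1}F$ along the shared leg, recognise the inner ket--bra pair as $\Kp(\L)$, enlarge it via semi-regularity of the braiding \eqref{eq:semi-regularity-for-braiding} to slices of $c_{L,L}$, and collapse the resulting diagram using naturality of the braiding, the Pentagon equation \eqref{eq:br-pentagon-1} and \eqref{eq:remove-unitaries}. The braided bookkeeping you worry about is exactly what the paper carries out, and your idea of pre-packaging it as a ``twisted Pentagon equation'' for $G=c_{L,L}^{-1}F$ is a harmless reformulation. This part is fine.

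The second assertion is where your route diverges from the paper, and it has a genuine gap. The reduction to showing $C(F)^*=C(\hat F)\subseteq C(F)$ is legitimate, but the proposed mechanism --- ``expand a generic slice of $F^*c_{L,L}$ and insert, using $\Kp(\L)\subseteq C(F)$, a slice of $c_{L,L}^{-1}F$'' --- is not an available move. Semi-regularity of $F$ lets you replace an \emph{inner} ket--bra pair, i.e.\ an occurrence of $\Kp(\L)$ sandwiched between two slices, by slices of $c_{L,L}^{-1}F$; a single slice of $F^*c_{L,L}$ contains no such inner pair, and manufacturing one would require a factorisation such as $C(\hat F)=[C(\hat F)\cdot\Kp(\L)]$, which fails in norm: the compacts are not a norm-approximate unit for bounded operators (for instance $[\B(\L)\Kp(\L)]=\Kp(\L)$, while $C(\hat F)$ may be strictly larger than $\Kp(\L)$ when $\hat F$ is only semi-regular). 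What the insertion trick genuinely yields is the pair of inclusions $[C(F)C(F)^*]\subseteq C(F)$ and $[C(F)^*C(F)]\subseteq C(F)$ --- each of these \emph{is} a product of two slices, so there is an inner $\Kp(\L)$ to replace --- and self-adjointness does not follow from these by inspection. The paper closes exactly this gap with a separate algebraic step: a norm-closed subalgebra $C\subseteq\B(\L)$ satisfying $CC^*\subseteq C$ and $C^*C\subseteq C$ is automatically self-adjoint, because $a^*=\lim_n p_n(a^*a)a^*\in C^*CC^*\subseteq C$ for suitable polynomials $p_n$, cf.\ \cite{Timmermann:An_inv_to_QG_and_duality}*{Lem.\,7.3.7}. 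You need this (or an equivalent factorisation argument) to finish; once it is added, your plan becomes the paper's proof.
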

\begin{proof}
That $C(F)$ is a subalgebra is seen as follows:
\begin{align}
&[C(F)C(F)] \nonumber\\
&= 
\left[ 
\scalebox{\scalefactor}{
\begin{tikzpicture}[baseline=-\the\dimexpr\fontdimen22\textfont2\relax+1.7cm, line width=.1em]
\draw (0,0) -- (0,.5);
\draw (0,1.5) -- (0,2.25);
\draw (0,3.25) -- (0,3.5);
\draw (1,.25) -- (1,.5);
\draw (1,2) -- (1,2.25);
\draw (0,.5) -- (1,.5);
\draw (0,2.25) -- (1,2.25);
\braid[border height=.0cm,height=1cm,width=1cm] at (0,1.5) s_1;
\braid[border height=.0cm,height=1cm,width=1cm] at (0,3.25) s_1;
\bra{1}{3.25};
\ket{1}{2};
\bra{1}{1.5};
\ket{1}{.25};
\node at (.2,.15) {$L$};
\node at (.2,3.35) {$L$};
\node at (-.2,.5) {$F$};
\node at (-.2,2.25) {$F$};
\end{tikzpicture}} 
\right]
\stackrel{\eqref{eq:semi-regularity-for-braiding}}\subseteq 
\left[ 
\scalebox{\scalefactor}{
\begin{tikzpicture}[baseline=-\the\dimexpr\fontdimen22\textfont2\relax+1.7cm, line width=.1em]
\draw (0,0) -- (0,.5);
\draw (0,1.5) -- (0,2.25);
\draw (0,3.25) -- (0,3.5);
\draw (1,.25) -- (1,.5);
\draw (1,2) -- (1,2.25);
\draw (0,.5) -- (1,.5);
\draw (0,2.25) -- (1,2.25);
\braid[border height=.0cm,height=1cm,width=1cm] at (0,1.5) s_1;
\braid[border height=.0cm,height=1cm,width=1cm] at (0,3.25) s_1;
\braid[border height=.15cm,height=.5cm,width=.5cm] at (1,2.15) s_1;
\bra{1}{3.25};
\bra{1.5}{2.15};
\ket{1.5}{1.35};
\ket{1}{.25};
\end{tikzpicture}} 
\right]
=
\left[ 
\scalebox{\scalefactor}{
\begin{tikzpicture}[baseline=-\the\dimexpr\fontdimen22\textfont2\relax+1.7cm, line width=.1em]
\draw (0,0) -- (0,1.5);
\draw (0,3) -- (0,3.5);
\draw (1,.25) -- (1,1.5);
\draw (2,.5) -- (2,1.5);
\draw (0,.75) -- (1,.75);
\draw (1,1.25) -- (2,1.25);
\braid[border height=.0cm,height=.5cm,width=1cm] at (0,3) s_1 s_2 s_1;
\bra{1}{3.1};
\bra{2}{2.9};
\ket{2}{.5};
\ket{1}{.25};
\end{tikzpicture}} 
\right]
\stackrel{\eqref{eq:br-pentagon-1}}=
\left[ 
\scalebox{\scalefactor}{
\begin{tikzpicture}[baseline=-\the\dimexpr\fontdimen22\textfont2\relax+1.7cm, line width=.1em]
\draw (0,0) -- (0,1.5);
\draw (0,3) -- (0,3.5);
\draw (1,.25) -- (1,1.5);
\draw (2,.25) -- (2,1.5);
\draw (0,1.35) -- (1,1.35);
\draw[color=white, line width=.7em] (0.5,1) -- (1.5,1);
\draw (0,1) -- (2,1);
\draw (1,.65) -- (2,.65);
\braid[border height=.0cm,height=.5cm,width=1cm] at (0,3) s_1 s_2 s_1;
\bra{1}{3.1};
\bra{2}{2.9};
\ket{2}{.25};
\ket{1}{.25};
\end{tikzpicture}} 
\right]
=
\left[ 
\scalebox{\scalefactor}{
\begin{tikzpicture}[baseline=-\the\dimexpr\fontdimen22\textfont2\relax+1.7cm, line width=.1em]
\draw (0,0) -- (0,2);
\draw (0,3) -- (0,3.5);
\draw (1,.25) -- (1,.75);
\draw (1,1.25) -- (1,2);
\draw (2,.25) -- (2,.75);
\draw (2,1.25) -- (2,2);
\draw (1,2.5) -- (2,2.5);
\draw (0,1.625) -- (1,1.625);
\draw (1,.65) -- (2,.65);
\braid[border height=.0cm,height=.5cm,width=1cm] at (0,3) s_2 s_1;
\braid[border height=.0cm,height=.5cm,width=1cm] at (1,1.25) s_1;
\bra{1}{3.1};
\bra{2}{3.1};
\ket{2}{.25};
\ket{1}{.25};
\end{tikzpicture}} 
\right]
\nonumber\\
&\stackrel{\eqref{eq:remove-unitaries}}= C(F) \ .
\end{align}
If $F$ is semi-regular, then we have 
\begin{align}
&[C(F)C(F)^*] \nonumber\\
&= 
\left[ 
\scalebox{\scalefactor}{
\begin{tikzpicture}[baseline=-\the\dimexpr\fontdimen22\textfont2\relax+1.7cm, line width=.1em]
\draw (0,0) -- (0,.25);
\draw (0,1.4) -- (0,2.25);
\draw (0,3.25) -- (0,3.5);
\draw (1,2) -- (1,2.25);
\draw[dashed] (0,1.25) -- (1,1.25);
\draw (0,2.25) -- (1,2.25);
\braid[border height=.15cm,height=1cm,width=1cm] at (0,1.4) s_1^{-1};
\braid[border height=.0cm,height=1cm,width=1cm] at (0,3.25) s_1;
\bra{1}{3.25};
\ket{1}{2};
\bra{1}{1.5};
\ket{1}{.25};
\node at (.2,.15) {$L$};
\node at (.2,3.35) {$L$};
\node at (-.25,1.25) {$F^*$};
\node at (-.2,2.25) {$F$};
\end{tikzpicture}} 
\right]
\stackrel[\text{reg.}]{\text{semi-}}\subseteq 
\left[ 
\scalebox{\scalefactor}{
\begin{tikzpicture}[baseline=-\the\dimexpr\fontdimen22\textfont2\relax+1.7cm, line width=.1em]
\draw (0,0) -- (0,.25);
\draw (0,1.4) -- (0,2.25);
\draw (0,3.25) -- (0,3.5);
\draw (1,1.25) -- (1,1.5);
\draw (2,1.25) -- (2,1.5);
\draw[dashed] (0,1.25) -- (1,1.25);
\draw (0,2.25) -- (1,2.25);
\draw (1,1.5) -- (2,1.5);
\braid[border height=.0cm,height=1cm,width=1cm] at (0,3.25) s_1;
\braid[border height=.0cm,height=.75cm,width=1cm] at (1,2.25) s_1;
\braid[border height=.15cm,height=1cm,width=1cm] at (0,1.4) s_1^{-1};
\bra{1}{3.25};
\bra{2}{2.35};
\ket{2}{1.25};
\ket{1}{.25};
\end{tikzpicture}} 
\right]
= 
\left[ 
\scalebox{\scalefactor}{
\begin{tikzpicture}[baseline=-\the\dimexpr\fontdimen22\textfont2\relax+1.7cm, line width=.1em]
\draw (0,0) -- (0,.25);
\draw (0,1.4) -- (0,2.25);
\draw (0,3.25) -- (0,3.5);
\draw (1,1.25) -- (1,2.25);
\draw (2,1) -- (2,2.25);
\draw[dashed] (0,1.25) -- (1,1.25);
\draw[color=white, line width=.7em] (.5,2) -- (1.5,2);
\draw (0,2) -- (2,2);
\draw (1,1.625) -- (2,1.625);
\braid[border height=.0cm,height=.5cm,width=1cm] at (0,3.25) s_1 s_2;
\braid[border height=.15cm,height=1cm,width=1cm] at (0,1.4) s_1^{-1};
\bra{1}{3.3};
\bra{2}{3.1};
\ket{2}{1};
\ket{1}{.25};
\end{tikzpicture}} 
\right]
\stackrel{\eqref{eq:br-pentagon-1}}= 
\left[ 
\scalebox{\scalefactor}{
\begin{tikzpicture}[baseline=-\the\dimexpr\fontdimen22\textfont2\relax+1.7cm, line width=.1em]
\draw (0,0) -- (0,.25);
\draw (0,1.4) -- (0,2.25);
\draw (0,3.25) -- (0,3.5);
\draw (1,1.25) -- (1,2.25);
\draw (2,1) -- (2,2.25);
\draw[dashed] (0,1.9) -- (1,1.9);
\draw (1,1.5) -- (2,1.5);
\braid[border height=.0cm,height=.5cm,width=1cm] at (0,3.25) s_1 s_2;
\braid[border height=.15cm,height=1cm,width=1cm] at (0,1.4) s_1^{-1};
\bra{1}{3.3};
\bra{2}{3.1};
\ket{2}{1};
\ket{1}{.25};
\end{tikzpicture}} 
\right]
= 
\left[ 
\scalebox{\scalefactor}{
\begin{tikzpicture}[baseline=-\the\dimexpr\fontdimen22\textfont2\relax+1.4cm, line width=.1em]
\draw (0,-.25) -- (0,1);
\draw (0,2) -- (0,3);
\draw (1,.75) -- (1,1);
\draw (1,2.5) -- (1,2.75);
\draw (1.5,.75) -- (1.5,2);
\draw (1.5,2.5) -- (1.5,2.75);
\draw[dashed] (1,2.5) -- (1.5,2.5);
\draw (0,1) -- (1,1);
\braid[border height=.0cm,height=.5cm,width=.5cm] at (1,2.5) s_1^{-1};
\braid[border height=.0cm,height=1cm,width=1cm] at (0,2) s_1;
\braid[border height=.25cm,height=.5cm,width=.5cm] at (1,1) s_1;
\bra{1}{2.75};
\bra{1.5}{2.75};
\ket{1}{0};
\ket{1.5}{0};
\end{tikzpicture}} 
\right]
\nonumber\\
&\stackrel{\eqref{eq:remove-unitaries}}= C(F) 
\end{align}
and 
\begin{align}
&[C(F)^*C(F)] \nonumber\\
&\quad= 
\left[ 
\scalebox{\scalefactor}{
\begin{tikzpicture}[baseline=-\the\dimexpr\fontdimen22\textfont2\relax+1.7cm, line width=.1em]
\draw (0,0) -- (0,.5);
\draw (0,1.5) -- (0,2);
\draw (0,3) -- (0,3.5);
\draw (1,.25) -- (1,.5);
\draw (1,3) -- (1,3.25);
\draw (0,.5) -- (1,.5);
\draw[dashed] (0,3) -- (1,3);
\braid[border height=.0cm,height=1cm,width=1cm] at (0,3) s_1^{-1};
\braid[border height=.0cm,height=1cm,width=1cm] at (0,1.5) s_1;
\bra{1}{3.25};
\ket{1}{2};
\bra{1}{1.5};
\ket{1}{.25};
\node at (.2,.15) {$L$};
\node at (-.2,.5) {$F$};
\node at (-.3,3) {$F^*$};
\end{tikzpicture}} 
\right]
\stackrel[\text{reg.}]{\text{semi-}}\subseteq 
\left[ 
\scalebox{\scalefactor}{
\begin{tikzpicture}[baseline=-\the\dimexpr\fontdimen22\textfont2\relax+1.7cm, line width=.1em]
\draw (0,0) -- (0,.5);
\draw (0,1.5) -- (0,2);
\draw (0,3) -- (0,3.5);
\draw (1,.25) -- (1,.5);
\draw (1,3) -- (1,3.25);
\draw (1.75,1.25) -- (1.75,1.5);
\draw (0,.5) -- (1,.5);
\draw (1,1.5) -- (1.75,1.5);
\draw[dashed] (0,3) -- (1,3);
\braid[border height=.0cm,height=1cm,width=1cm] at (0,3) s_1^{-1};
\braid[border height=.0cm,height=.5cm,width=.75cm] at (1,2) s_1;
\braid[border height=.0cm,height=1cm,width=1cm] at (0,1.5) s_1;
\bra{1}{3.25};
\bra{1.75}{2.1};
\ket{1.75}{1.25};
\ket{1}{.25};
\end{tikzpicture}} 
\right]
= 
\left[ 
\scalebox{\scalefactor}{
\begin{tikzpicture}[baseline=-\the\dimexpr\fontdimen22\textfont2\relax+1.7cm, line width=.1em]
\draw (0,-.25) -- (0,2);
\draw (0,3) -- (0,3.5);
\draw (1,1) -- (1,2);
\draw (2,1) -- (2,2);
\draw[color=white, line width=.7em] (.5,1.1) -- (1.5,1.1);
\draw (0,1.1) -- (2,1.1);
\draw (0,1.5) -- (1,1.5);
\draw[dashed] (1,1.9) -- (2,1.9);
\braid[border height=.15cm,height=.5cm,width=1cm] at (0,3.15) s_2 s_1;
\braid[border height=.0cm,height=1cm,width=1cm] at (1,1) s_1^{-1};
\bra{1}{3.25};
\bra{2}{3.25};
\ket{1}{0};
\ket{2}{0};
\end{tikzpicture}} 
\right]
\stackrel{\eqref{eq:br-pentagon-1}}= 
\left[ 
\scalebox{\scalefactor}{
\begin{tikzpicture}[baseline=-\the\dimexpr\fontdimen22\textfont2\relax+1.7cm, line width=.1em]
\draw (0,-.25) -- (0,2);
\draw (0,3) -- (0,3.5);
\draw (1,1) -- (1,2);
\draw (2,1) -- (2,2);
\draw (0,1.5) -- (1,1.5);
\draw[dashed] (1,1) -- (2,1);
\braid[border height=.15cm,height=.5cm,width=1cm] at (0,3.15) s_2 s_1;
\braid[border height=.0cm,height=1cm,width=1cm] at (1,1) s_1^{-1};
\bra{1}{3.25};
\bra{2}{3.25};
\ket{1}{0};
\ket{2}{0};
\end{tikzpicture}} 
\right]
\stackrel{\eqref{eq:remove-unitaries}}= C(F) \ .
\end{align}
Now, if a norm-closed subalgebra $C$ of $\B(\L)$ satisfies 
$C^*C \subseteq C$ and $CC^* \subseteq C$, then it is 
already a $C^*$-algebra, cf.\@ \cite{Timmermann:An_inv_to_QG_and_duality}*{Lem.\,7.3.7}. 
Namely, for every $a \in C$ there 
exists a sequence $(p_n)_{n \in \N}$ of polynomials such that 
$a = \lim_n a p_n(a^*a)$. Then 
$a^* = \lim_n p_n(a^*a) a^* \in C^*CC^* \subseteq C$. 
\end{proof}

\begin{prop}\label{prop:hat-AF-is-a-C*-alg}
Let $\Cat$ be a semi-regularly braided 
concrete monoidal $W^*$-category, 
and let $L \in \Cat$ be an object. 
If $F \in \U_{\Cat}(L \ot L)$ is a 
multiplicative unitary and $C(F) = C(F)^*$ 
\textnormal{(}in particular, if $F$ is semi-regular\textnormal{)}, 
then $\hatA(F)$ is a $C^*$-algebra. 
\end{prop}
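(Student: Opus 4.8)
The plan is to show that $\hatA(F)$ is stable under multiplication by adjoints from either side, i.e.\ $\hatA(F)^*\hatA(F)\subseteq\hatA(F)$ and $\hatA(F)\hatA(F)^*\subseteq\hatA(F)$, and then to invoke \cite{Timmermann:An_inv_to_QG_and_duality}*{Lem.\,7.3.7} exactly as at the end of the preceding Lemma: by Proposition \ref{prop:F-slices-are-non-deg-algebras}, $\hatA(F)$ is already a norm-closed subalgebra of $\B(\L)$, so these two inclusions force it to be a $C^*$-algebra. Thus the whole content of the proof is to verify the two inclusions.

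Both are obtained by a graphical computation entirely parallel to the ones in the proof of the previous Lemma (and to the classical argument of \cite{Baaj-Skandalis:Unitaires}). For, say, $\hatA(F)^*\hatA(F)$, one writes the product of two slices of $F$ as an operator on the first leg, obtained by sandwiching a three-leg operator between vectors in the second and third legs, the three-leg operator being $F$ placed on legs $1,2$ (conjugated) times $F$ placed on legs $1,3$ -- here the latter placement uses the ordinary tensor flip, since $\hatA(F)$ is built from $F$ only as a bounded operator, not via the monoidal structure of $\Cat$. The one new subtlety relative to the non-braided case is that the Pentagon equation \eqref{eq:br-pentagon-1} is phrased through the categorical half-braiding $(c_{L,L})_{12}$ rather than the tensor flip. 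This gap is bridged just as in the previous computations: the "flip block" sitting between the two copies of $F$ is a rank-one -- hence compact -- operator on a leg that is about to be cancelled, and by semi-regularity of the braiding \eqref{eq:semi-regularity-for-braiding} it may be rewritten as a slice of $c_{L,L}$ (respectively $c_{L,L}^{-1}$); after this substitution the expression has precisely the shape to which \eqref{eq:br-pentagon-1} applies, and one finishes by applying the Pentagon equation and stripping off the remaining unitaries via \eqref{eq:remove-unitaries}.

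The hypothesis $C(F)=C(F)^*$ enters at the following point. The manipulations above do not return directly into $\hatA(F)$: they produce slices of $c_{L,L}^{-1}F$ (or of $F^*c_{L,L}$), i.e.\ elements of $C(F)$ (or $C(F)^*$), multiplied by elements of $\hatA(F)$. Using $C(F)=C(F)^*$ -- which holds in particular when $F$ is semi-regular, by the preceding Lemma -- one identifies $[(\id\ot\omega)(F^*c_{L,L})]=C(F)^*=C(F)=[(\id\ot\omega)(c_{L,L}^{-1}F)]$, and a final application of the Pentagon equation together with \eqref{eq:remove-unitaries} absorbs this factor, leaving an element of $\hatA(F)$. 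Equivalently, one first establishes the module identities $[C(F)\cdot\hatA(F)]=\hatA(F)=[\hatA(F)\cdot C(F)]$ by the same technique and then combines them with $C(F)=C(F)^*$; either way one arrives at $\hatA(F)^*\hatA(F)\subseteq\hatA(F)$ and, symmetrically, $\hatA(F)\hatA(F)^*\subseteq\hatA(F)$.

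I expect the main obstacle to be bookkeeping rather than a new idea: keeping straight which occurrences of $F$, $F^*$, $c_{L,L}$, $c_{L,L}^{-1}$ sit on which legs through the sequence (insert a compact) $\to$ (rewrite it via \eqref{eq:semi-regularity-for-braiding}) $\to$ (apply \eqref{eq:br-pentagon-1}) $\to$ (strip unitaries via \eqref{eq:remove-unitaries}), and -- exactly as spelled out in the proof of Proposition \ref{prop:F-slices-are-non-deg-algebras} -- threading the suppressed coherence isomorphisms of the fibre functor through correctly so that the Pentagon equation \emph{in $\Cat$} can be invoked. Once the graphical identities are set up, the only genuinely substantive inputs beyond the preceding Lemma are the bridge between the tensor flip and the categorical braiding provided by semi-regularity of $c$, and the conversion $C(F)=C(F)^*$; the final passage to a $C^*$-algebra is then the same soft argument via \cite{Timmermann:An_inv_to_QG_and_duality}*{Lem.\,7.3.7} used just above.
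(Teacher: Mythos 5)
Your endgame---reducing to $\hatA(F)^*\hatA(F)\subseteq\hatA(F)$ and $\hatA(F)\hatA(F)^*\subseteq\hatA(F)$ and invoking \cite{Timmermann:An_inv_to_QG_and_duality}*{Lem.\,7.3.7}---would be sound, but the derivation of those two inclusions is where the proposal has a genuine gap, and the claimed parallelism with the preceding Lemma fails. Writing $\hatA(F)^*\hatA(F)=[\langle\L|_2\,F^*(\id_\L\ot\Kp(\L))F\,|\L\rangle_2]$ and replacing the inserted compact by a slice of $c_{L,L}$ (or, assuming semi-regularity of $F$, by a slice of $c_{L,L}^{-1}F$), you arrive at a three-leg operator of the form $F_{12}^*(c_{L,L})_{23}F_{12}$, respectively $F_{12}^*(c_{L,L}^{-1})_{23}F_{23}F_{12}$. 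Neither has the shape to which \eqref{eq:br-pentagon-1} applies: the Pentagon equation rearranges conjugations in the order $F_{12}(\cdots)F_{12}^*$ --- concretely it yields $F_{12}(c_{L,L}^{-1})_{23}F_{23}F_{12}^*=(c_{L,L}^{-1})_{23}F_{12}^*F_{23}$, after which \eqref{eq:remove-unitaries} strips $F_{23}$ --- but it gives no handle on $F_{12}^*(\cdots)F_{12}$; attempting it produces words like $(F_{12}^*)^2F_{23}F_{12}F_{23}^*$ from which no unitary adjacent to the kets can be removed. The tractable direction, $F_{12}(\id_\L\ot C(F))F_{12}^*$, lands in $\hatA(F)^*$ rather than $\hatA(F)$, which is circular for your purposes. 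Likewise the proposed module identities $[C(F)\hatA(F)]=\hatA(F)=[\hatA(F)C(F)]$, even if granted, combine with $C(F)=C(F)^*$ to say nothing about $\hatA(F)^*$, because no relation between $\hatA(F)^*$ and $C(F)$ has been established; that relation is precisely the missing idea.

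The paper's proof supplies exactly that relation and then needs neither the two inclusions nor Lem.\,7.3.7. A single application of the Pentagon equation and \eqref{eq:remove-unitaries} shows that $[(\id\ot\omega)(F(\id_\L\ot C(F))F^*)\,|\,\omega\in\B(\L)_*]=\hatA(F)^*$. The left-hand side consists of slices of the set $F(\id_\L\ot C(F))F^*$, which is manifestly self-adjoint when $C(F)=C(F)^*$ (this is the only point where the hypothesis enters), so $\hatA(F)^*=\hatA(F)$; since $\hatA(F)$ is already a norm-closed subalgebra by Proposition \ref{prop:F-slices-are-non-deg-algebras}, it is a $C^*$-algebra. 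I recommend you replace the two-inclusion strategy by this direct self-adjointness argument.
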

\begin{proof}
We have: 
\begin{align}
&[(F \ot \id_\L)(\id_\L \ot C(F))(F^* \ot \id_\L)] 
\nonumber\\
&\hspace{7em}= \left[ 
\scalebox{\scalefactor}{
\begin{tikzpicture}[baseline=-\the\dimexpr\fontdimen22\textfont2\relax, 
	line width=.1em, x=1cm, y=1cm]
\draw (0,-1.4) -- (0,1.4);
\braid[border height=0cm,height=1cm,width=1cm] at (1,.7) s_1;
\draw (1,.7) -- (1,1);
\draw (1,-1) -- (1,-.2);
\draw (2,-.7) -- (2,-.2);
\draw (1,-.3) -- (2,-.3);		
\draw (0,.7) -- (1,.7);		
\draw[dashed] (0,-.7) -- (1,-.7);		
\bra{1}{1};
\ket{1}{-1};
\bra{2}{.7};
\ket{2}{-.7};
\end{tikzpicture}} 
\right]
\stackrel{\eqref{eq:br-pentagon-1}}= 
\left[ 
\scalebox{\scalefactor}{
\begin{tikzpicture}[baseline=-\the\dimexpr\fontdimen22\textfont2\relax, line width=.1em]
\draw (0,-1.4) -- (0,1.4);
\braid[border height=0cm,height=1cm,width=1cm] at (1,1) s_1;
\draw (1,-1) -- (1,0);
\draw (2,-1) -- (2,0);
\draw (1,-.5) -- (2,-.5);		
\draw[dashed] (0,0) -- (1,0);		
\bra{1}{1};
\ket{1}{-1};
\bra{2}{1};
\ket{2}{-1};
\end{tikzpicture}} 
\right]
\stackrel{\eqref{eq:remove-unitaries}}= \hatA(F)^* \ , 
\end{align}
where the left-hand side is an obviously self-adjoint set. 
Hence, $\hatA(F)^* = \hatA(F) \subseteq \B(\L)$ is a self-adjoint 
norm-closed subset. As we know already that $\hatA(F)$ is an algebra, 
it follows that $\hatA(F)$ is a $C^*$-algebra. 
\end{proof}

\subsection{The monoidal category of $C^*$-algebras $\hatA(\Cat)$}
\label{ssec:category-hat-A(C)}

So far, we have seen that slices over the right leg of a 
semi-regular multiplicative unitary form a $C^*$-algebra 
$\hatA(F)$. 
In the next section we will show that 
$\hatA(F)$ comes equipped with a 
braided bialgebra structure. 
Here, we prepare for this and 
introduce a suitable monoidal category 
of $C^*$-algebras 
(see Definition \ref{df:monoidal-cat-of-C*-algebras}), 
which contains $\hatA(F)$ as an object.
We insist on regularity from now on, instead of just 
semi-regularity. 

\medskip

Let $\Cat$ be a regularly braided concrete monoidal $W^*$-category. 
Given objects $H,K \in \Cat$ and a morphism 
$X \in \End_\Cat(H \ot K)$, we denote by 
$\hatA(X)$ the closed subspace 
$[(\id \ot \omega)(X)\,|\, \omega \in \B(\K)_*]$ 
of $\B(\H)$. 
By $\hatA(\Cat)$ we denote the category of pairs 
$(H,A)$, where $H \in \Cat$ is an object and $A \subseteq \B(\H)$ 
is a $C^*$-subalgebra of the form $A = \hatA(X)$ 
for some morphism $X$. 
As morphisms $(H_1,A_1) \to (H_2,A_2)$ 
in $\hatA(\Cat)$ we take all 
non-degenerate $^*$-homomorphisms $f:\, A_1 \to \Malg(A_2)$ 
which are of the form 
\begin{align}\label{eq:morphisms-in-hatA(C)}
A_1 \ni a \mapsto V(\id_K \ot a)V^* \ ,
\end{align}
for some $K \in \Cat$ and some isometry 
$V \in \Hom_\Cat(K \ot H_1, H_2)$. 
By regularity of the braiding on $\Cat$, for each 
$H \in \Cat$ we have that $(H,\Kp(\H))$ 
is an object of $\hatA(\Cat)$: just choose $X = c_{H,H}$. 
If $F \in \U_\Cat(L \ot L)$ is regular, then 
$(L,\hatA(F))$ is an object of $\hatA(\Cat)$, 
as desired.

Before defining a tensor product on $\hatA(\Cat)$, 
we need the following lemma.

\begin{lem}
\sloppy
Let $\Cat$ be a regularly braided concrete monoidal $W^*$-category 
and let $(H_1,A_1),(H_2,A_2) \in \hatA(\Cat)$ be objects. 
Then 
\begin{align}\label{eq:hbt-on-algebras}
A_1 \hbt A_2 := \bigl[c_{H_2,H_1}(\id_{\H_2} \ot A_1)c_{H_2,H_1}^{-1} 
	\cdot (\id_{\H_1} \ot A_2)\bigr] \subseteq \B(\H_1 \ot \H_2)
\end{align}
is a crossed product of $A_1$ and $A_2$ 
with injections 
$\alpha: A_1 \ni a \mapsto c_{H_2,H_1}(\id_{\H_2} \ot a)c_{H_2,H_1}^{-1}$ and 
$\beta: A_2 \ni a \mapsto \id_{\H_1} \ot a$. 
Moreover, $(H_1 \ot H_2, A_1 \hbt A_2)$ is an object 
of $\hatA(\Cat)$ and $\alpha,\beta$ are morphisms 
in $\hatA(\Cat)$.
\end{lem}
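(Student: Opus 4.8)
\emph{Strategy and the soft parts.}
I would first reduce the lemma to one commutation relation, and then prove that relation with the graphical calculus. Since $A_1,A_2$ are $C^*$-algebras and $c_{H_2,H_1}$ is unitary, $\alpha\colon a\mapsto c_{H_2,H_1}(\id_{\H_2}\ot a)c_{H_2,H_1}^{-1}$ is conjugation by a unitary and $\beta\colon a\mapsto\id_{\H_1}\ot a$ is an ampliation, so $\alpha,\beta$ are injective $^*$-homomorphisms, $\mathcal A:=\alpha(A_1)$ and $\mathcal B:=\beta(A_2)$ are $C^*$-subalgebras of $\B(\H_1\ot\H_2)$, and $A_1\hbt A_2=[\mathcal A\cdot\mathcal B]$ by definition. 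For $C^*$-subalgebras $\mathcal A,\mathcal B$ of $\B(\mathcal H)$, $[\mathcal A\mathcal B]$ is a $C^*$-algebra as soon as $[\mathcal A\mathcal B]=[\mathcal B\mathcal A]$ (then $[\mathcal A\mathcal B]^{*}=[\mathcal B\mathcal A]=[\mathcal A\mathcal B]$, and, using also that $\mathcal A,\mathcal B$ are algebras, $[\mathcal A\mathcal B\mathcal A\mathcal B]=[\mathcal A(\mathcal B\mathcal A)\mathcal B]\subseteq[\mathcal A\mathcal A\mathcal B\mathcal B]=[\mathcal A\mathcal B]$); and, passing to adjoints, $[\mathcal A\mathcal B]=[\mathcal B\mathcal A]$ follows already from the single inclusion $[\mathcal B\mathcal A]\subseteq[\mathcal A\mathcal B]$. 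Conjugating by $c_{H_2,H_1}$, this inclusion is equivalent to $\bigl[D\cdot(\id_{\H_2}\ot A_1)\bigr]\subseteq\bigl[(\id_{\H_2}\ot A_1)\cdot D\bigr]$ in $\B(\H_2\ot\H_1)$, where $D:=c_{H_2,H_1}^{-1}(\id_{\H_1}\ot A_2)c_{H_2,H_1}$.

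\emph{The commutation.}
Writing $A_i=\hatA(X_i)$ with morphisms $X_i\in\End_\Cat(H_i\ot K_i)$, the two sides of the reduced inclusion are closed linear spans of products of slices of $\id_{H_2}\ot X_1$ over $K_1$ and of $(c_{H_2,H_1}^{-1}\ot\id_{K_2})(\id_{H_1}\ot X_2)(c_{H_2,H_1}\ot\id_{K_2})$ over $K_2$, taken in the two orders. I would draw these as two diagrams and transform one into the other: since $X_1$ and $X_2$ are morphisms, the braiding is natural with respect to them, so (together with the hexagon identities) $X_1$ and $X_2$ may be slid past the half-braidings, which produces only additional braidings between the legs $K_1$ and $K_2$; and, these being unitaries acting on legs that are sliced away, they can be removed by \eqref{eq:remove-unitaries}. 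This yields $[D\cdot(\id_{\H_2}\ot A_1)]=[(\id_{\H_2}\ot A_1)\cdot D]$, hence $[\mathcal A\mathcal B]=[\mathcal B\mathcal A]$, hence $A_1\hbt A_2$ is a $C^*$-algebra. The manipulation is of the same type as the computation showing that $\hatA(F)$ is an algebra in Proposition \ref{prop:F-slices-are-non-deg-algebras}.

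\emph{The form of $A_1\hbt A_2$ and of $\alpha,\beta$.}
To see that $(H_1\ot H_2,A_1\hbt A_2)$ is an object of $\hatA(\Cat)$, I would assemble $X_1$ and $X_2$, braided together by $c_{H_2,H_1}$, into a single morphism $X\in\End_\Cat\bigl((H_1\ot H_2)\ot(K_1\ot K_2)\bigr)$ whose slices over $K_1\ot K_2$ by product functionals $\omega_1\ot\omega_2$ are exactly the products $\alpha\bigl((\id\ot\omega_1)(X_1)\bigr)\,\beta\bigl((\id\ot\omega_2)(X_2)\bigr)$; since product functionals are norm-dense in $\B(\K_1\ot\K_2)_{*}$, this gives $\hatA(X)=[\alpha(A_1)\beta(A_2)]=A_1\hbt A_2$. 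Next, $\alpha$ has the form $a\mapsto V(\id_{H_2}\ot a)V^{*}$ with $V:=c_{H_2,H_1}\in\Hom_\Cat(H_2\ot H_1,H_1\ot H_2)$ an isometry (being unitary), and $\beta$ has the form $a\mapsto V'(\id_{H_1}\ot a)V'^{*}$ with $V':=\id_{H_1\ot H_2}$; and $\alpha,\beta$ are non-degenerate $^*$-homomorphisms into $\Malg(A_1\hbt A_2)$, which one reads off from $A_1\hbt A_2=[\alpha(A_1)\beta(A_2)]$ together with $[\alpha(A_1)\alpha(A_1)]=\alpha(A_1)$, $[\beta(A_2)\beta(A_2)]=\beta(A_2)$ and the commutation relation just proved. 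Hence $\alpha,\beta$ are morphisms in $\hatA(\Cat)$, and $A_1\hbt A_2=[\alpha(A_1)\beta(A_2)]$ is a crossed product of $A_1,A_2$ with these inclusions.

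\emph{The main obstacle} is the commutation in the second step: one must set up the bookkeeping of the legs $\H_1,\H_2,\K_1,\K_2$ carefully and verify that, when $X_1$ and $X_2$ are pushed past the half-braidings, all the braidings that appear and obstruct the commutation act only on the auxiliary legs $K_1,K_2$, so that they vanish under the slice maps. Everything else is routine.
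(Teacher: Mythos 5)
Your proposal is correct and follows essentially the same route as the paper: reduce everything to the commutation relation $[\alpha(A_1)\beta(A_2)]=[\beta(A_2)\alpha(A_1)]$, establish it by sliding $X_1,X_2$ past the braidings so that only unitaries on the sliced-away legs $K_1,K_2$ appear (removable by \eqref{eq:remove-unitaries}), and then read off from the same computation that $A_1\hbt A_2=\hatA(X)$ for an assembled morphism $X$ and that $\alpha,\beta$ are non-degenerate morphisms of the form \eqref{eq:morphisms-in-hatA(C)}. The only cosmetic differences are that you derive the commutation from a single inclusion plus adjoints where the paper proves the equality directly, and that the slices of the paper's explicit $X$ by product functionals agree with the products $\alpha(a_1)\beta(a_2)$ only after removing the $c_{K_2,K_1}$ factors on the sliced legs -- which your closed-linear-span argument handles anyway.
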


\fussy
In the notation \enquote{$A_1 \hbt A_2$} we 
deliberately suppressed the dependence on 
$H_1,H_2$.

\begin{proof}
We first have to show that $A_1 \hbt A_2$ is 
a $C^*$-algebra. 
Since $(H_1,A_1)$ and $(H_2,A_2)$ are objects 
of $\hatA(\Cat)$, we find objects $K_1,K_2 \in \Cat$ 
and morphisms $X_i \in \End_\Cat(H_i \ot K_i)$, $i=1,2$, 
such that $A_1 = \hatA(X_1)$ and $A_2 = \hatA(X_2)$. 
Then 
\begin{align}\label{eq:lem-on-braided-cross-product--1}
&\bigl[\alpha(A_1) \cdot \beta(A_2)\bigr]
\nonumber\\
&=
\left[ 
\scalebox{\scalefactor}{
\begin{tikzpicture}[baseline=-\the\dimexpr\fontdimen22\textfont2\relax+1.8cm, line width=.1em]
\draw (0,.25) -- (0,1.5);
\draw (1,.25) -- (1,1.5);
\draw (2,.5) -- (2,1.5);
\draw (2,2) -- (2,3);
\draw (1,1) -- (2,1);
\draw (1,2.5) -- (2,2.5);
\braid[border height=.0cm,height=1cm,width=1cm] at (0,3.5) s_1^{-1} s_1;
\bra{2}{3};
\ket{2}{2};
\bra{2}{1.5};
\ket{2}{.5};
\node at (1.5,1.2) {$X_2$};
\node at (1.5,2.7) {$X_1$};
\node[anchor=west,inner sep=.15em] at (0,.4) {$H_1$};
\node[anchor=west,inner sep=.15em] at (1,.4) {$H_2$};
\end{tikzpicture}} 
\right]
= 
\left[ 
\scalebox{\scalefactor}{
\begin{tikzpicture}[baseline=-\the\dimexpr\fontdimen22\textfont2\relax+1.8cm, line width=.1em]
\draw (0,.25) -- (0,1.5);
\draw (1,.25) -- (1,1.5);
\draw (2,.5) -- (2,1.5);
\draw (2,2) -- (2,3);
\draw (1,1) -- (2,1);
\draw (1,2.5) -- (2,2.5);
\braid[border height=.0cm,height=1cm,width=1cm] at (0,3.5) s_1^{-1} s_1;
\braid[border height=.15cm,height=.5cm,width=.5cm] at (2,2.15) s_1^{-1};
\bra{2}{3};
\bra{2.5}{2.15};
\ket{2.5}{1.35};
\ket{2}{.5};
\node at (1.5,1.2) {$X_2$};
\node at (1.5,2.7) {$X_1$};
\end{tikzpicture}} 
\right]
\stackrel{\eqref{eq:remove-unitaries}}= 
\left[ 
\scalebox{\scalefactor}{
\begin{tikzpicture}[baseline=-\the\dimexpr\fontdimen22\textfont2\relax+2cm, line width=.1em]
\draw (0,.5) -- (0,1.5);
\draw (1,.5) -- (1,1.5);
\draw (1,1.5) -- (2,1.5);
\draw (1,2.5) -- (2,2.5);
\braid[border height=.0cm,height=1cm,width=1cm] at (0,3.5) s_1^{-1} s_1;
\braid[border height=.0cm,height=1cm,width=.5cm] at (2,2.5) s_1^{-1};
\braid[border height=.15cm,height=.5cm,width=.5cm] at (2,3.15) s_1;
\braid[border height=.15cm,height=.5cm,width=.5cm] at (2,1.65) s_1;
\bra{2}{3.15};
\bra{2.5}{3.15};
\ket{2.5}{.85};
\ket{2}{.85};
\node at (1.5,1.7) {$X_2$};
\node at (1.5,2.7) {$X_1$};
\end{tikzpicture}} 
\right]
\stackrel{\eqref{eq:remove-unitaries}}= 
\left[ 
\scalebox{\scalefactor}{
\begin{tikzpicture}[baseline=-\the\dimexpr\fontdimen22\textfont2\relax-1.9cm, line width=.1em, y=-1cm]
\draw (0,.25) -- (0,1.5);
\draw (1,.25) -- (1,1.5);
\draw (2,.5) -- (2,1.5);
\draw (2,2) -- (2,3);
\draw (1,1) -- (2,1);
\draw (1,2.5) -- (2,2.5);
\braid[border height=.0cm,height=1cm,width=1cm] at (0,1.5) s_1^{-1} s_1;
\braid[border height=.15cm,height=.5cm,width=.5cm] at (2,1.35) s_1^{-1};
\ket{2}{3};
\ket{2.5}{2.15};
\bra{2.5}{1.35};
\bra{2}{.5};
\node at (1.5,.8) {$X_2$};
\node at (1.5,2.3) {$X_1$};
\end{tikzpicture}} 
\right]
\\
&=\bigl[\beta(A_2) \cdot \alpha(A_1) \bigr] \ 
\nonumber
\end{align}
shows first, that $A_1 \hbt A_2 \subseteq \B(\H_1 \ot \H_2)$ 
is a subalgebra. 
\sloppy
Since both $\alpha(A_1)$ 
and $\beta(A_2)$ are $^*$-invariant subsets of $\B(\H_1 \ot \H_2)$, 
it also shows that $A_1 \hbt A_2$ is $^*$-invariant, 
hence, a $C^*$-subalgebra. 
Thirdly, we see from 
\eqref{eq:lem-on-braided-cross-product--1} 
that $A_1 \hbt A_2 = \hatA(X)$ for 
$X=(c_{H_2,H_1} \ot c_{K_2,K_1}^{-1})\circ(\id_{H_2} \ot X_1 \ot \id_{K_2})
	\circ(c_{H_2,H_1}^{-1} \ot c_{K_2,K_1})\circ(\id_{H_1} \ot X_2 \ot \id_{K_1}))$, 
hence, $(H_1 \ot H_2, A_1 \hbt A_2)$ is an object 
of $\hatA(\Cat)$. 
Finally, it is easily checked -- again using 
\eqref{eq:lem-on-braided-cross-product--1} -- 
that $\alpha$ and $\beta$ are non-degenerate 
$^*$-homomorphisms $A_i \to \Malg(A_1 \hbt A_2)$, 
$i = 1,2$, 
and they are clearly of the form 
\eqref{eq:morphisms-in-hatA(C)}.
\end{proof}

\fussy
We can now define a tensor product $\hbt$ on $\hatA(\Cat)$ 
for objects $(H_1,A_1),(H_2,A_2) \in \hatA(\Cat)$ by 
$(H_1,A_1) \hbt (H_2,A_2) := (H_1 \ot H_2, A_1 \hbt A_2)$. 
The definition of $\hbt$ for morphisms is done in the following 
lemma.

\begin{lem}
Let $\Cat$ be a regularly braided concrete monoidal $W^*$-category 
and let $f \in \Hom_{\hatA(\Cat)}((H_1,A_1), (H_1',A_1'))$ and 
$g \in \Hom_{\hatA(\Cat)}((H_2,A_2), (H_2',A_2'))$ 
be morphisms. 
Then 
\begin{align}\label{eq:hbt-on-morphisms}
(f \hbt g)(\alpha(a_1)\cdot \beta(a_2)) 
	:= \alpha(f(a_1)) \cdot \beta(g(a_2)) \ ,
	\qquad a_1 \in A_1,\ a_2 \in A_2 \ ,
\end{align}
defines a morphism 
$f \hbt g \in \Hom_{\hatA(\Cat)}((H_1,A_1) \hbt (H_2,A_2), 
	(H_1',A_1') \hbt (H_2',A_2'))$. 
Moreover, for composable morphisms 
$f,f'$ and $g, g'$ we have 
$(f' \hbt g') \circ (f \hbt g) = (f' \circ f) \hbt (g' \circ g)$. 
\end{lem}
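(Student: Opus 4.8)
The plan is to realise $f \hbt g$ as a morphism of the form \eqref{eq:morphisms-in-hatA(C)} and to read off well-definedness and the morphism property from that presentation. By the definition of the morphisms in $\hatA(\Cat)$, write $f(a) = V_1(\id_{K_1} \ot a)V_1^*$ and $g(a) = V_2(\id_{K_2} \ot a)V_2^*$ for objects $K_1, K_2 \in \Cat$ and isometries $V_1 \in \Hom_\Cat(K_1 \ot H_1, H_1')$, $V_2 \in \Hom_\Cat(K_2 \ot H_2, H_2')$. Put $K := K_1 \ot K_2$ and
\[
V := (V_1 \ot V_2)\circ(\id_{K_1} \ot c_{K_2,H_1} \ot \id_{H_2})
  \;\in\; \Hom_\Cat\bigl(K \ot (H_1 \ot H_2),\, H_1' \ot H_2'\bigr)\ ;
\]
this is an isometry, since the braiding is unitary and $V_1 \ot V_2$ is an isometry. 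Let $\Phi\colon \B(\H_1 \ot \H_2) \to \B(\H_1' \ot \H_2')$ be $\Phi(x) := V(\id_{\K_1 \ot \K_2} \ot x)V^*$; as $V^*V = \id$, the map $\Phi$ is a $^*$-homomorphism.

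The crux is to verify that, denoting now by $\alpha, \beta$ the inclusions of $A_1' \hbt A_2'$ (extended to multipliers, which is legitimate by the previous lemma),
\[
\Phi(\alpha(a_1)) = \alpha(f(a_1)) \qquad\text{and}\qquad \Phi(\beta(a_2)) = \beta(g(a_2))
\qquad (a_1 \in A_1,\ a_2 \in A_2)
\]
as multipliers of $A_1' \hbt A_2'$. I would substitute the explicit form of $V$ and push the braidings through $V_1, V_2$ using only naturality of $c$ and the hexagon identity $c_{K_2 \ot H_2, H_1} = (c_{K_2,H_1} \ot \id_{H_2})\circ(\id_{K_2} \ot c_{H_2,H_1})$; in the graphical calculus this amounts to a short diagrammatic manipulation. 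What comes out is, strictly speaking, not $\alpha(f(a_1))$ and $\beta(g(a_2))$ but $c_{H_2',H_1'}(p_2 \ot f(a_1))c_{H_2',H_1'}^{-1}$ and $p_1 \ot g(a_2)$, with $p_i := V_iV_i^* \in \End_\Cat(H_i')$. Since $f$ and $g$ are non-degenerate, each $p_i$ acts as the unit on $A_i'$; and since $p_i$ is a morphism in $\Cat$ it may be moved through $c$ by naturality, so these stray projections are absorbed upon multiplying against a generator of $A_1' \hbt A_2'$, and the displayed equalities hold as multipliers. This braiding bookkeeping, together with the harmless projections $p_i$, is the only genuine obstacle; everything else is formal.

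Granting the key step, $\Phi(\alpha(a_1)\beta(a_2)) = \Phi(\alpha(a_1))\Phi(\beta(a_2)) = \alpha(f(a_1))\beta(g(a_2))$ for all $a_1, a_2$. Hence $\Phi$ maps $[\alpha(A_1)\cdot\beta(A_2)] = A_1 \hbt A_2$ (previous lemma) into $\Malg(A_1' \hbt A_2')$, acts non-degenerately there (using again non-degeneracy of $f, g$ and $A_1'\hbt A_2' = [\alpha(A_1')\beta(A_2')]$), and is visibly of the form \eqref{eq:morphisms-in-hatA(C)}. As $\Phi$ is continuous and $A_1 \hbt A_2$ is the closed linear span of the products $\alpha(a_1)\beta(a_2)$, the prescription \eqref{eq:hbt-on-morphisms} is well defined and coincides with $\Phi|_{A_1\hbt A_2}$; thus $f \hbt g$ is a morphism in $\hatA(\Cat)$.

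It remains to check functoriality. First, morphisms of the form \eqref{eq:morphisms-in-hatA(C)} compose again to such morphisms — one composes the implementing isometries — so both $(f'\hbt g')\circ(f\hbt g)$ and $(f'\circ f)\hbt(g'\circ g)$ are morphisms in $\hatA(\Cat)$, in particular continuous $^*$-homomorphisms, each determined by its values on the generators $\alpha(a_1)\beta(a_2)$. Since $f' \hbt g'$ is non-degenerate, it extends to multipliers, and a routine strict-continuity argument gives $(f'\hbt g')(\alpha(m_1)\beta(m_2)) = \alpha(f'(m_1))\beta(g'(m_2))$ for multipliers $m_1, m_2$. Applying this with $m_1 = f(a_1)$ and $m_2 = g(a_2)$,
\[
\bigl((f'\hbt g')\circ(f\hbt g)\bigr)(\alpha(a_1)\beta(a_2))
= (f'\hbt g')\bigl(\alpha(f(a_1))\beta(g(a_2))\bigr)
= \alpha\bigl(f'(f(a_1))\bigr)\beta\bigl(g'(g(a_2))\bigr),
\]
which is precisely $\bigl((f'\circ f)\hbt(g'\circ g)\bigr)(\alpha(a_1)\beta(a_2))$. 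Agreement on a dense set and continuity then yield the desired identity.
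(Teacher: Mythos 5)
Your proof is correct and follows essentially the same route as the paper: both exhibit $f \hbt g$ as conjugation by an explicit isometry built from the implementing isometries of $f$ and $g$ together with the braiding, which settles well-definedness, the form \eqref{eq:morphisms-in-hatA(C)}, and (via the commutation relation $[\alpha(A_1')\beta(A_2')]=[\beta(A_2')\alpha(A_1')]$ from the preceding lemma) non-degeneracy. The only cosmetic differences are that you verify the two generator types separately and absorb the stray projections $p_i=V_iV_i^*$ using $f(a)=p_1f(a)p_1$ and $g(b)=p_2g(b)p_2$, whereas the paper computes the product $\alpha(f(a))\beta(g(b))$ in a single graphical step with no projections appearing, and that you spell out the functoriality argument which the paper declares immediate from the definition.
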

\begin{proof}
Let $V_f \in \Hom_\Cat(K_f \ot H_1, H_1')$ 
and $V_g \in \Hom_\Cat(K_g \ot H_2, H_2')$ 
be isometries such that 
$f(a) = V_f(\id_{\K_f} \ot a)V_f^*$ and 
$g(b) = V_g(\id_{\K_g} \ot b)V_g^*$ for 
$a \in A_1$ and $b \in A_2$. 
Then 
\begin{align}
\alpha(f(a)) \cdot \beta(g(b)) 
= 
\scalebox{\scalefactor}{
\begin{tikzpicture}[baseline=-\the\dimexpr\fontdimen22\textfont2\relax+2.875cm, line width=.1em]
\draw (0,0) -- (0,2.25);
\draw (0,3) -- (0,5);
\draw (1,0) -- (1,.5);
\draw (.7,.5) -- (.7,2);
\draw (1.3,.5) -- (1.3,2);
\draw (.7,3.25) -- (.7,4.75);
\draw (1.3,3.25) -- (1.3,4.75);
\braid[border height=.0cm,height=.75cm,width=1cm] at (0,5.75) s_1^{-1};
\braid[border height=.0cm,height=.75cm,width=1cm] at (0,3) s_1;
\node[draw,fill=white,inner sep=.15em,minimum width=.8cm] at (1,4.75) {$V_f$};
\node[draw,circle,fill=white,inner sep=.15em] at (1.3,4) {$a$};
\node[draw,fill=white,inner sep=.15em,minimum width=.8cm] at (1,3.25) {$V_f^*$};
\node[draw,fill=white,inner sep=.15em,minimum width=.8cm] at (1,2) {$V_g$};
\node[draw,circle,fill=white,inner sep=.15em] at (1.3,1.25) {$b$};
\node[draw,fill=white,inner sep=.15em,minimum width=.8cm] at (1,.5) {$V_g^*$};
\node[anchor=north] at (0,0) {$H_1'$};
\node[anchor=north] at (1,0) {$H_2'$};
\end{tikzpicture}} 
=
\scalebox{\scalefactor}{
\begin{tikzpicture}[baseline=-\the\dimexpr\fontdimen22\textfont2\relax+2.125cm, line width=.1em]
\draw (0,0) -- (0,.5);
\draw (-.3,.5) -- (-.3,3.75);
\draw (.25,1.25) -- (.25,3);
\draw (.75,2) -- (.75,2.5);
\draw (0,3.75) -- (0,4.25);
\draw (1,0) -- (1,.5);
\draw (.75,1.25) -- (.75,1.5);
\draw (1.3,.5) -- (1.3,1.5);
\draw (1.3,3) -- (1.3,3.75);
\draw (1,3.75) -- (1,4.25);
\braid[border height=.0cm,height=.5cm,width=.5cm] at (.25,3.5) s_1^{-1};
\braid[border height=.0cm,height=.5cm,width=.55cm] at (.75,3) s_1^{-1};
\braid[border height=.0cm,height=.5cm,width=.55cm] at (.75,2) s_1;
\braid[border height=.0cm,height=.5cm,width=.5cm] at (.25,1.25) s_1;
\node[draw,fill=white,inner sep=.15em,minimum width=.8cm] at (0,3.75) {$V_f$};
\node[draw,fill=white,inner sep=.15em,minimum width=.8cm] at (1,3.75) {$V_g$};
\node[draw,circle,fill=white,inner sep=.15em] at (1.3,2.25) {$a$};
\node[draw,circle,fill=white,inner sep=.15em] at (1.3,1.25) {$b$};
\node[draw,fill=white,inner sep=.15em,minimum width=.8cm] at (0,.5) {$V_f^*$};
\node[draw,fill=white,inner sep=.15em,minimum width=.8cm] at (1,.5) {$V_g^*$};
\node[anchor=north] at (0,0) {$H_1'$};
\node[anchor=north] at (1,0) {$H_2'$};
\end{tikzpicture}} 
\ ,
\qquad a \in A_1,\ b \in A_2 \ .
\end{align}
This shows that \eqref{eq:hbt-on-morphisms} 
is well-defined and extends to a $^*$-homomorphism 
$f \hbt g: A_1 \hbt A_2 \to \B(\H_1' \ot \H_2')$ 
of the form \eqref{eq:morphisms-in-hatA(C)}. 
Moreover, 
\begin{align}
&\bigl[ (f \hbt g)(A_1 \hbt A_2)\cdot (A_1' \hbt A_2') \bigr]
\stackrel{\eqref{eq:hbt-on-algebras}}= 
	\bigl[ (f \hbt g)([\alpha(A_1) \beta(A_2)]) 
		\cdot [\alpha(A_1') \beta(A_2')] \bigr]
	\nonumber\\
&= \bigl[ \alpha(f(A_1)) \beta(g(A_2)) \cdot [\beta(A_2') \alpha(A_1')] \bigr]
= \bigl[ \alpha(f(A_1)) \underbrace{[\beta(g(A_2)) 
		\cdot \beta(A_2')]}_{= \beta(A_2')} \alpha(A_1') \bigr]
	\nonumber\\
&= \bigl[ \underbrace{[\alpha(f(A_1)) \alpha(A_1')]}_{= \alpha(A_1')} \beta(A_2') \bigr]
= A_1' \hbt A_2' 
\end{align}
implies that the image of $f \hbt g$ is in 
$\Malg(A_1' \hbt A_2') \subseteq \B(\H_1' \ot \H_2')$ 
and that $f \hbt g$ is non-degenerate. 
That $\hbt$ is functorial is immediate from its 
definition \eqref{eq:hbt-on-morphisms}. 
\end{proof}

There is another tensor product $\habt$ on $\hatA(\Cat)$, 
which is obtained in a similar way as $\hbt$, but 
using the inverse braiding instead of 
the braiding $c$. 
For example, on objects $(H_1,A_1)$ and $(H_2,A_2)$ of 
$\hatA(\Cat)$ we have 
$A_1 \habt A_2 := \bigl[c_{H_1,H_2}^{-1}(\id_{\H_2} \ot A_1)c_{H_1,H_2} 
	\cdot (\id_{\H_1} \ot A_2)\bigr]$.

\subsection{$\hatA(F)$ as a $C^*$-bialgebra in $\hatA(\Cat)$}
\label{ssec:hat-A(F)-as-C*-bialgebra}

Recall that by $\ot_\mathrm{min}$ we denote the minimal (spatial) 
$C^*$-tensor product. 
We have the following relation between the tensor 
products $\ot_\mathrm{min}, \hbt$ and $\habt$:

\begin{lem}\label{lem:on-braided-tensor-products}
Let $\Cat$ be a regularly braided concrete monoidal $W^*$-category, 
let $(K,A) \in \hatA(\Cat)$ and $H \in \Cat$ be 
an object with underlying Hilbert space $\H$. 
Then we have 
\begin{align}
A \ot_\mathrm{min} \Kp(\H) = A \hbt \Kp(\H) 
	= A \habt \Kp(\H)
\end{align}
as subspaces of $\B(\K \ot \H)$. 
\end{lem}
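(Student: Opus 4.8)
The plan is to reduce the whole statement to the single identity $A\hbt\Kp(\H)=A\ot_\mathrm{min}\Kp(\H)$. First, $A\subseteq\B(\K)$ and $\Kp(\H)\subseteq\B(\H)$ are faithful representations, so their minimal tensor product acts faithfully on $\K\ot\H$ and hence $A\ot_\mathrm{min}\Kp(\H)=\bigl[(A\ot\id_\H)\cdot(\id_\K\ot\Kp(\H))\bigr]\subseteq\B(\K\ot\H)$; whereas, by definition, $A\hbt\Kp(\H)=\bigl[\alpha(A)\cdot(\id_\K\ot\Kp(\H))\bigr]$ with $\alpha(a):=c_{H,K}(\id_\H\ot a)c_{H,K}^{-1}$. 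The case of $\habt$ then needs no separate work: $A\habt\Kp(\H)$ is exactly the $\hbt$-product formed in the reversed category $\Cat^\mathrm{rev}$, whose braiding is again regular by Remark~\ref{rmk:regular-braiding}(2) and in which $(K,A)$ and $(H,\Kp(\H))$ are still objects of $\hatA(\Cat^\mathrm{rev})$; since the minimal tensor product does not see the braiding, the $\hbt$-case applied in $\Cat^\mathrm{rev}$ gives $A\habt\Kp(\H)=A\ot_\mathrm{min}\Kp(\H)$.

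For the $\hbt$-case, fix a morphism $X\in\End_\Cat(K\ot K')$ with $A=\hatA(X)$ (throughout, $\hatA$ slices the last tensor leg). Combining naturality of the braiding applied to the morphism $X$ with the hexagon identity $c_{H,K\ot K'}=(\id_K\ot c_{H,K'})\circ(c_{H,K}\ot\id_{K'})$, one obtains $(c_{H,K}\ot\id_{K'})\circ(\id_H\ot X)\circ(c_{H,K}^{-1}\ot\id_{K'})=(\id_K\ot c_{H,K'}^{-1})\circ(X\ot\id_H)\circ(\id_K\ot c_{H,K'})$, and therefore $\alpha(\hatA(X))=\hatA(W)$ with $W:=(\id_K\ot c_{H,K'}^{-1})\circ(X\ot\id_H)\circ(\id_K\ot c_{H,K'})\in\End_\Cat(K\ot H\ot K')$. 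Consequently $A\hbt\Kp(\H)$ and $A\ot_\mathrm{min}\Kp(\H)$ are obtained, respectively, by slicing the last leg of the operator sets $(\id_\K\ot\Kp(\H)\ot\id_{\K'})\cdot W$ and $(\id_\K\ot\Kp(\H)\ot\id_{\K'})\cdot\tilde X$, where $\tilde X\in\B(\K\ot\H\ot\K')$ denotes $X$ acting on the first and last of the three tensor legs (for the first of these one uses that $A\hbt\Kp(\H)$ is a $C^*$-algebra, as shown above, so the $\Kp(\H)$-factor may be moved in front of $\alpha(A)$; for the second, that $\tilde X$ commutes with $\id_\K\ot k\ot\id_{\K'}$, so that slicing the $\K'$-leg simply leaves $a\ot k$). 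So it remains to prove that these two sliced spans agree in $\B(\K\ot\H)$.

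That final step is a graphical computation in the spirit of the proof of Proposition~\ref{prop:F-slices-are-non-deg-algebras}. Writing $\Kp(\H)=\bigl[\,|\H\rangle\langle\H|\,\bigr]$, with $|\H\rangle:=\B(\C,\H)$ and $\langle\H|:=\B(\H,\C)$ in the triangle convention, the $\langle\H|$-triangle on the $\H$-leg is pushed successively through the two half-braidings $c_{H,K'}^{\pm1}$ appearing in $W$: invoking regularity of the braiding for $c_{H,K'}$ on one side and for $c_{H,K'}^{-1}$ on the other (again Remark~\ref{rmk:regular-braiding}(2)), and using that the slicing functional is present, these half-braidings should be absorbed into the slice; then \eqref{eq:remove-unitaries} removes the unitaries that are left, and one is looking exactly at the picture defining the slice of $(\id_\K\ot\Kp(\H)\ot\id_{\K'})\cdot\tilde X$. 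I expect this step to be the main obstacle: as usual with such arguments, one must track the coherence isomorphisms carefully when applying the regularity condition, and one must arrange the two uses of braiding-regularity, for $c$ and for $c^{-1}$, so that they fit together.
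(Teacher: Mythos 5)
Your proposal is correct and is essentially the paper's own proof read in the opposite direction: the paper's graphical chain starts from $A \ot_\mathrm{min} \Kp(\H)$, uses regularity of the braiding to trade the rank-one operators on $\H$ for slices of the crossing between $H$ and the sliced leg $K'$ of $X$, and uses \eqref{eq:remove-unitaries} to insert the second crossing, arriving at $A \hbt \Kp(\H)$ (your explicit identity $\alpha(\hatA(X))=\hatA(W)$ via naturality and the hexagon is exactly what the paper's diagram isotopies encode), and the $\habt$-case is likewise dispatched by passing to $\Cat^\mathrm{rev}$. The step you flag as the main obstacle does close, in fact slightly more easily than you anticipate: only the crossing that meets the ket supplied by the slicing functional requires regularity of $c_{H,K'}$, while the other crossing meets only bras and is absorbed by the adjoint of \eqref{eq:remove-unitaries}, i.e.\ by unitarity alone, so no second appeal to regularity (or to Remark \ref{rmk:regular-braiding}) is needed.
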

\begin{proof}
Let $X$ be a morphism in $\Cat$ such that $A = \hatA(X)$. 
We have 
\begin{align}\label{eq:lem-on-braided-tens-prod--pf--1}
&A \ot_\mathrm{min} \Kp(\H) 
\nonumber\\
&= 
\left[ 
\scalebox{\scalefactor}{
\begin{tikzpicture}[baseline=-\the\dimexpr\fontdimen22\textfont2\relax+1.9cm, line width=.1em]
\draw (0,.5) -- (0,3.5);
\draw (1,.5) -- (1,1);
\draw (1,1.5) -- (1,2.5);
\draw (1,3) -- (1,3.5);
\draw (0,2) -- (1,2);
\ket{1}{3};
\bra{1}{2.5};
\ket{1}{1.5};
\bra{1}{1};
\node[anchor=south,inner sep=.2em] at (.5,2) {$X$};
\end{tikzpicture}} 
\right]
= 
\left[ 
\scalebox{\scalefactor}{
\begin{tikzpicture}[baseline=-\the\dimexpr\fontdimen22\textfont2\relax+1.9cm, line width=.1em]
\draw (0,.5) -- (0,3.5);
\draw (1,.5) -- (1,1);
\draw (1,1.5) -- (1,2.5);
\draw (1,3) -- (1,3.5);
\draw (0,2) -- (1,2);
\braid[border height=.15cm,height=.5cm,width=.5cm] at (1,3.15) s_1;
\bra{1.5}{3.15};
\ket{1.5}{2.35};
\ket{1}{1.5};
\bra{1}{1};
\node[anchor=south,inner sep=.2em] at (.5,2) {$X$};
\end{tikzpicture}} 
\right]
\stackrel{\eqref{eq:remove-unitaries}}= 
\left[ 
\scalebox{\scalefactor}{
\begin{tikzpicture}[baseline=-\the\dimexpr\fontdimen22\textfont2\relax+1.9cm, line width=.1em]
\draw (0,.5) -- (0,3.5);
\draw (1,.5) -- (1,1);
\draw (1,3) -- (1,3.5);
\draw (0,2.25) -- (1,2.25);
\braid[border height=.15cm,height=.75cm,width=.5cm] at (1,3.15) s_1 s_1^{-1};
\bra{1.5}{3.15};
\ket{1.5}{1.5};
\ket{1}{1.5};
\bra{1}{1};
\node[anchor=south,inner sep=.2em] at (.5,2.25) {$X$};
\end{tikzpicture}} 
\right]
= 
\left[ 
\scalebox{\scalefactor}{
\begin{tikzpicture}[baseline=-\the\dimexpr\fontdimen22\textfont2\relax+1.9cm, line width=.1em]
\draw (0,.5) -- (0,1.5);
\draw (1,.5) -- (1,1);
\draw (2,2) -- (2,3);
\draw (1,2.5) -- (2,2.5);
\braid[border height=.0cm,height=1cm,width=1cm] at (0,3.5) s_1^{-1} s_1;
\bra{2}{3};
\ket{2}{2};
\ket{1}{1.5};
\bra{1}{1};
\node[anchor=south,inner sep=.2em] at (1.5,2.5) {$X$};
\end{tikzpicture}} 
\right]
= 
A \hbt \Kp(\H) 
\end{align}
as vector spaces. 
The statement with $\hbt$ replaced by 
$\habt$ follows by replacing $\Cat$ by 
$\Cat^\mathrm{rev}$. 
\end{proof}

We have the following analogue of 
\cite{Baaj-Skandalis:Unitaires}*{Prop.\,3.6}:

\begin{thm}\label{thm:F-as-multiplier}
Let $\Cat$ be a regularly braided concrete monoidal $W^*$-category, 
let $L \in \Cat$ be an object with underlying 
Hilbert space $\L$, and let $F \in \U_{\Cat}(L \ot L)$ 
be a regular multiplicative unitary. 
Then we have 
\begin{enumerate}
\item $F \in \Malg(\hatA(F) \hbt \hatA(\hat F))$ 
\item $[(\hatA(F) \hbt \id_\L)F(\id_\L \hbt \hatA(\hat F))] = \hatA(F) \hbt \hatA(\hat F)$. 
\end{enumerate}
\end{thm}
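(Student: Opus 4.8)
The plan is to follow \cite{Baaj-Skandalis:Unitaires}*{Prop.\,3.6}, carrying out the two computations behind it inside the graphical calculus of Section~\ref{ssec:to-C*-algebras}. Throughout, write $\hat F = c_{L,L}^{-1}F^*c_{L,L}$ and $B := \hatA(F)\hbt\hatA(\hat F)\subseteq\B(\L\ot\L)$. Recall from the lemma on $A_1\hbt A_2$ that $B$ is a $C^*$-algebra acting non-degenerately, that $B=[\alpha(\hatA(F))\cdot\beta(\hatA(\hat F))]=[\beta(\hatA(\hat F))\cdot\alpha(\hatA(F))]$ with $\alpha(a)=c_{L,L}(\id_\L\ot a)c_{L,L}^{-1}$ and $\beta(b)=\id_\L\ot b$, that $\alpha(\hatA(F)),\beta(\hatA(\hat F))\subseteq\Malg(B)$, and that the symbols $\hatA(F)\hbt\id_\L$, $\id_\L\hbt\hatA(\hat F)$ in the statement denote $\alpha(\hatA(F))$, $\beta(\hatA(\hat F))$. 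Since $\hatA(F)$ and $\hatA(\hat F)$ are $C^*$-algebras (Proposition~\ref{prop:hat-AF-is-a-C*-alg}), $\alpha(\hatA(F))$ and $\beta(\hatA(\hat F))$ are $^*$-closed, and since $\alpha(\hatA(F))$ is a $C^*$-algebra one has $[\alpha(\hatA(F))\cdot B]=[\alpha(\hatA(F))\alpha(\hatA(F))\beta(\hatA(\hat F))]=B$, and likewise $[B\cdot\alpha(\hatA(F))]=[B\cdot\beta(\hatA(\hat F))]=[\beta(\hatA(\hat F))\cdot B]=B$.

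First I would prove (2) by a graphical computation of exactly the kind used for Propositions~\ref{prop:F-slices-are-non-deg-algebras} and~\ref{prop:hat-AF-is-a-C*-alg}. Unfolding the definitions (and realising each slice over a fresh auxiliary leg) gives $\hatA(F)\hbt\id_\L=[(\id\ot\id\ot\omega)((c_{L,L})_{12}F_{23}(c_{L,L})_{12}^{-1})]$ and $\id_\L\hbt\hatA(\hat F)=[(\id\ot\id\ot\omega)(\hat F_{23})]$, so that $[(\hatA(F)\hbt\id_\L)\,F\,(\id_\L\hbt\hatA(\hat F))]$ becomes the closed span, over two normal functionals, of slices over two auxiliary legs of a product of copies of $F$, $F^*$ and $c_{L,L}^{\pm1}$ on $\L^{\ot 4}$ — one more picture in the style of~\eqref{eq:hat-A-is-an-algebra}. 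The middle copy of $F$ is then absorbed using the braided Pentagon equation~\eqref{eq:br-pentagon-1}, which converts $(c_{L,L})_{12}F_{23}(c_{L,L})_{12}^{-1}\cdot F_{12}$ into $F_{12}$ composed with unitaries supported on the auxiliary legs; those leftover unitaries, acting only on the two sliced (``capped'') legs, are discarded by~\eqref{eq:remove-unitaries}, and — because regularity of the braiding turns the inclusion in~\eqref{eq:semi-regularity-for-braiding} into an equality — every step of the chain is an equality, ending at $[\alpha(\hatA(F))\cdot\beta(\hatA(\hat F))]=B$. By analogous (and shorter) computations one also obtains the two inclusions $[(\hatA(F)\hbt\id_\L)\,F]\subseteq B$ and $[F\,(\hatA(F)\hbt\id_\L)]\subseteq B$, i.e.\ $[\alpha(\hatA(F))F]\subseteq B$ and $[F\alpha(\hatA(F))]\subseteq B$ — the braided analogues of the classical relations $(\hat S\ot1)W\subseteq\hat S\ot S$ and $W(\hat S\ot1)\subseteq\hat S\ot S$.

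Part~(1) then follows formally from these inclusions. From $B=[\alpha(\hatA(F))\cdot B]$ and $[F\alpha(\hatA(F))]\subseteq B$ we get $F\cdot B=[F\alpha(\hatA(F))\cdot B]\subseteq[B\cdot B]=B$; from $B=[B\cdot\alpha(\hatA(F))]$ and $[\alpha(\hatA(F))F]\subseteq B$ we get $B\cdot F=[B\cdot\alpha(\hatA(F))F]\subseteq[B\cdot B]=B$; and since $B$ acts non-degenerately, every $T\in\B(\L\ot\L)$ with $TB\subseteq B$ and $BT\subseteq B$ lies in $\Malg(B)$, so $F\in\Malg(B)$. The only genuinely laborious point is the graphical computation: one must keep track of the half-braidings $(c_{L,L})_{12}^{\pm1}$ through each application of the Pentagon equation and arrange that every surviving unitary ends up acting solely on the sliced legs, so that~\eqref{eq:remove-unitaries} can remove it. No new phenomenon appears — it is the same bookkeeping as in Section~\ref{ssec:to-C*-algebras}.
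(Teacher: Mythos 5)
There is a genuine gap, and it sits exactly where the proposal waves its hands. Your route to Part~(1) rests on the inclusions $[F\,(\hatA(F)\hbt\id_\L)]\subseteq\hatA(F)\hbt\hatA(\hat F)$ and $[(\hatA(F)\hbt\id_\L)\,F]\subseteq\hatA(F)\hbt\hatA(\hat F)$, which you describe as the braided analogues of $W(\hat S\ot 1)\subseteq\hat S\ot S$ and $(\hat S\ot 1)W\subseteq\hat S\ot S$. Those classical relations are \emph{false}: a product of two multipliers is only a multiplier, not an element of the algebra. Concretely, take the regular multiplicative unitary $W$ of $G=\R$ in $(\Hilb,\ot,\Sigma)$, with $\hat S=C_0(\R)$ and $S=C^*_r(\R)$; under $\Malg(C_0(\R)\ot_\mathrm{min}C^*_r(\R))\cong C_b^{\mathrm{strict}}(\R,\Malg(C^*_r(\R)))$ the operator $W$ corresponds to $s\mapsto\lambda_s$, so $(f\ot 1)W$ and $W(f\ot 1)$ correspond to $s\mapsto f(s)\lambda_s$, which has constant norm on the support of $f$ and is nowhere norm-continuous there, hence does not lie in $C_0(\R,C^*_r(\R))=\hat S\ot_\mathrm{min}S$. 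Since $(\Hilb,\ot,\Sigma)$ is a regularly braided special case of the hypotheses, the inclusions you invoke fail, and Part~(1) is not established. (Your final reduction --- $TB\subseteq B$, $BT\subseteq B$ and non-degeneracy imply $T\in\Malg(B)$ --- is fine, but what you would actually need as input is $FB\subseteq B$ and $BT\subseteq B$ with $T=F$, which \emph{is} statement~(1); it cannot be bought this cheaply.)

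The one-stroke graphical computation you propose for Part~(2) also does not go through as described. Setting $\tilde F:=(c_{L,L})_{12}F_{23}(c_{L,L}^{-1})_{12}$, the Pentagon equation \eqref{eq:br-pentagon-1} gives $\tilde F=F_{12}^*F_{23}F_{12}F_{23}^*$, hence $\tilde F F_{12}=F_{12}^*F_{23}F_{12}F_{23}^*F_{12}$: the surviving factors $F_{23}^{\pm1}$ act on the \emph{second operator leg}, not only on the sliced auxiliary legs, so \eqref{eq:remove-unitaries} cannot discard them. This is precisely why the paper does not compute $[(\hatA(F)\hbt\id_\L)F(\id_\L\hbt\hatA(\hat F))]$ directly: it first proves the statements with $\Kp(\L)$ in one slot, namely $(\hatA(F)\hbt\Kp(\L))F=\hatA(F)\hbt\Kp(\L)$ and $[(\hatA(F)\hbt\id_\L)F(\id_\L\hbt\Kp(\L))]=\hatA(F)\hbt\Kp(\L)$ (the compacts on the other operator leg are what allow the Pentagon-plus-regularity manipulation to close up), transports them to $\hat F$ in $\Cat^{\mathrm{rev}}$ and conjugates by $c_{L,L}$ to get the mirror statements with $\Kp(\L)$ in the first slot, and only then combines the two using $\tilde F=F_{12}^*F_{23}F_{12}F_{23}^*\in\Malg(\hatA(F)\hbt\Kp(\L)\hbt\hatA(\hat F))$ together with a compression of the middle leg by the isometries $S$ satisfying $[S^*\tilde FS]=\C\cdot F$. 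That three-step architecture (with the intermediate algebras and the compression trick) is the substance of the proof, not optional bookkeeping; as written, your argument establishes neither part of the theorem.
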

\begin{proof}
\emph{Part 1:} 
\sloppy
We divide the proof into three steps, first showing that 
$F \in \Malg(\hatA(F) \hbt \Kp(\L))$, then 
$F \in \Malg(\Kp(\L) \hbt \hatA(\hat F))$, before proving 
$F \in \Malg(\hatA(F) \hbt \hatA(\hat F))$. 

\fussy
\emph{Part 1/step 1:} 
\begin{align}
&(\hatA(F) \hbt \Kp(\L))F \nonumber\\
&= 
\left[ 
\scalebox{\scalefactor}{
\begin{tikzpicture}[baseline=-\the\dimexpr\fontdimen22\textfont2\relax+5, line width=.1em]
\draw (0,0) -- (2,0);		
\draw (0,-1) -- (0,1.5);
\draw[color=white,line width=.7em] (1,-1) -- (1,.5);
\draw (1,-1) -- (1,.5);
\draw (1,1) -- (1,1.5);
\draw (2,-.5) -- (2,.5);
\draw (0,-.5) -- (1,-.5);		
\ket{1}{1};
\bra{1}{.5};
\bra{2}{.5};
\ket{2}{-.5};
\end{tikzpicture}} 
\right]
= 
\left[ 
\scalebox{\scalefactor}{
\begin{tikzpicture}[baseline=-\the\dimexpr\fontdimen22\textfont2\relax+2.1cm, line width=.1em]
\draw (0,.25) -- (0,4);
\draw (1,.25) -- (1,.5);
\draw (1,1.5) -- (1,2.25);
\draw (1,3.75) -- (1,4);
\draw (2,1.5) -- (2,2.25);
\draw[color=white,line width=.7em] (.5,1.75) -- (1.5,1.75);
\draw (0,1.75) -- (2,1.75);
\draw (0,2.25) -- (1,2.25);
\braid[border height=0cm,height=1cm,width=1cm] at (1,1.5) s_1^{-1};
\braid[border height=0cm,height=1cm,width=1cm] at (1,3.25) s_1;
\ket{1}{3.75};
\bra{1}{3.25};
\bra{2}{3.25};
\ket{2}{.5};
\end{tikzpicture}} 
\right]
\stackrel{\eqref{eq:br-pentagon-1}}= 
\left[ 
\scalebox{\scalefactor}{
\begin{tikzpicture}[baseline=-\the\dimexpr\fontdimen22\textfont2\relax+2.1cm, line width=.1em]
\draw (0,.25) -- (0,4);
\draw (1,.25) -- (1,.5);
\draw (1,1.5) -- (1,2.25);
\draw (1,3.75) -- (1,4);
\draw (2,1.5) -- (2,2.25);
\draw (0,1.875) -- (1,1.875);
\draw (1,2.25) -- (2,2.25);
\draw[dashed] (1,1.5) -- (2,1.5);
\braid[border height=0cm,height=1cm,width=1cm] at (1,1.5) s_1^{-1};
\braid[border height=0cm,height=1cm,width=1cm] at (1,3.25) s_1;
\ket{1}{3.75};
\bra{1}{3.25};
\bra{2}{3.25};
\ket{2}{.5};
\end{tikzpicture}} 
\right]
\stackrel{\eqref{eq:remove-unitaries}}= 
\left[ 
\scalebox{\scalefactor}{
\begin{tikzpicture}[baseline=-\the\dimexpr\fontdimen22\textfont2\relax+1.8cm, line width=.1em]
\draw (0,.25) -- (0,3.5);
\draw (1,.25) -- (1,.5);
\draw (1,1.5) -- (1,2.5);
\draw (1,3) -- (1,3.5);
\draw (2,1.5) -- (2,2);
\draw (0,2) -- (1,2);
\draw[dashed] (1,1.5) -- (2,1.5);
\braid[border height=0cm,height=1cm,width=1cm] at (1,1.5) s_1^{-1};
\ket{1}{3};
\bra{1}{2.5};
\bra{2}{2};
\ket{2}{.5};
\end{tikzpicture}} 
\right]
\stackrel{\text{reg.}}= 
\left[ 
\scalebox{\scalefactor}{
\begin{tikzpicture}[baseline=-\the\dimexpr\fontdimen22\textfont2\relax+1.9cm, line width=.1em]
\draw (0,.5) -- (0,3.5);
\draw (1,.5) -- (1,1);
\draw (1,1.5) -- (1,2.5);
\draw (1,3) -- (1,3.5);
\draw (0,2) -- (1,2);
\ket{1}{3};
\bra{1}{2.5};
\ket{1}{1.5};
\bra{1}{1};
\end{tikzpicture}} 
\right]
\nonumber\\
&= \hatA(F) \ot_\mathrm{min} \Kp(\L) 
\stackrel[\ref{lem:on-braided-tensor-products}]{\text{Lem.}}= 
\hatA(F) \hbt \Kp(\L)
\end{align}
shows that $F \in \Malg(\hatA(F) \hbt \Kp(\L))$. 

\medskip

\emph{Part 1/step 2:} 
Applying the statement of step 1 to $\hat F$ gives 
$\hat F \in \Malg(\hatA(\hat F) \habt \Kp(\L))$; 
here, we have 
\enquote{$\habt$} instead of \enquote{$\hbt$}, 
since $\hat F$ satisfies the Pentagon equation 
in $\Cat^\mathrm{rev}$. 
Hence, with 
\begin{align}\label{eq:boxtimes--via--anti-bt}
\Kp(\L)) \hbt \hatA(\hat F)
&= [c_{L,L} \iota_2(\Kp(\L)) c_{L,L}^{-1} \iota_2(A(F))]
= c_{L,L} [\iota_2(\Kp(\L)) c_{L,L}^{-1} \iota_2(A(F)) c_{L,L}]c_{L,L}^{-1}
\nonumber\\
&= c_{L,L} (A(F) \habt \Kp(\L)) c_{L,L}^{-1} \ ,
\end{align}
we get 
\begin{align}
(\Kp(\L) \hbt \hatA(\hat F))F^* 
&= c_{L,L} (\hatA(\hat F) \habt \Kp(\L)) c_{L,L}^{-1} F^* 
= c_{L,L} (\hatA(\hat F) \habt \Kp(\L)) \hat F c_{L,L}^{-1} 
\nonumber\\
&= c_{L,L} (\hatA(\hat F) \habt \Kp(\L)) c_{L,L}^{-1} 
= \Kp(\L) \hbt \hatA(\hat F) \ ,
\end{align}
which shows $F \in \Malg(\Kp(\L) \hbt \hatA(\hat F))$.

\medskip

\emph{Part 1/step 3:} 
By the braided Pentagon equation and steps 1 \& 2, we have 
\begin{align}
\tilde F 
:= 
\scalebox{\scalefactor}{
\begin{tikzpicture}[baseline=-\the\dimexpr\fontdimen22\textfont2\relax, line width=.1em]
\draw (0,1) -- (0,-1);
\draw (1,1) -- (1,-1);
\draw (2,1) -- (2,-1);
\draw[color=white,line width=.7em] (.5,0) -- (1.5,0);
\draw (0,0) -- (2,0);
\node[anchor=east] at (0,0) {$F$};
\node[anchor=north] at (0,-1) {$L$};
\node[anchor=north] at (1,-1) {$L$};
\node[anchor=north] at (2,-1) {$L$};
\end{tikzpicture}} 
\ 
\stackrel{\eqref{eq:br-pentagon-1}}= 
\scalebox{\scalefactor}{
\begin{tikzpicture}[baseline=-\the\dimexpr\fontdimen22\textfont2\relax, line width=.1em]
\draw (0,1) -- (0,-1);
\draw (1,1) -- (1,-1);
\draw (2,1) -- (2,-1);
\draw[dashed] (1,-.6) -- (2,-.6);
\draw (0,-.2) -- (1,-.2);
\draw (1,.2) -- (2,.2);
\draw[dashed] (0,.6) -- (1,.6);
\node[anchor=east] at (0,.6) {$F^*$};
\node[anchor=west] at (2,.2) {$F$};
\node[anchor=east] at (0,-.2) {$F$};
\node[anchor=west] at (2,-.6) {$F^*$};
\node[anchor=north] at (0,-1) {$L$};
\node[anchor=north] at (1,-1) {$L$};
\node[anchor=north] at (2,-1) {$L$};
\end{tikzpicture}} 
\ \in \Malg(\hatA(F) \hbt \Kp(\L) \hbt \hatA(\hat F)) \ .
\end{align}
Put 
\begin{align}
S := 
\left[
\scalebox{\scalefactor}{
\begin{tikzpicture}[baseline=-\the\dimexpr\fontdimen22\textfont2\relax+.55cm, line width=.1em]
\draw (0,0) -- (0,1.25);
\draw (1,0) -- (1,.25);
\braid[border height=0cm,height=1cm,width=1cm] at (1,1.25) s_1^{-1};
\ket{2}{.25};
\end{tikzpicture}} 
\right]
\subseteq \B(\L \ot \L, \L\ot\L\ot\L) \ .
\end{align}
Then one easily checks that 
\begin{gather}
[S^*S] = \C\cdot \id_{\L\ot\L} \ , \qquad 
[S^*\tilde F S] = \C\cdot F \ , \nonumber\\ 
(\hatA(F) \hbt \Kp(\L) \hbt \hatA(\hat F)) \cdot S = S \cdot (\hatA(F) \hbt \hatA(\hat F)) \ .
\end{gather}
Hence, 
\begin{align}
&F \cdot (\hatA(F) \hbt \hatA(\hat F)) 
= [S^*\tilde F S \cdot (\hatA(F) \hbt \hatA(\hat F))] 
= [S^* \tilde F \cdot (\hatA(F) \hbt \Kp(\L) \hbt \hatA(\hat F)) S]
\nonumber\\
&\qquad\subseteq [S^* (\hatA(F) \hbt \Kp(\L) \hbt \hatA(\hat F)) S]
= [S^*S (\hatA(F) \hbt \hatA(\hat F))]
= \hatA(F) \hbt \hatA(\hat F) \ ,
\end{align}
proving $F \in \Malg(\hatA(F) \hbt \hatA(\hat F))$. 

\medskip

\emph{Part 2:} 
Again, we perform the proof in three steps.

\emph{Part 2/step 1:}
We have 
\begin{align}
&[(\hatA(F) \boxtimes \id_\L)F(\id_\L \boxtimes \Kp(\L))]
\nonumber\\
&= 
\left[
\scalebox{\scalefactor}{
\begin{tikzpicture}[baseline=-\the\dimexpr\fontdimen22\textfont2\relax+1.4cm, line width=.1em]
\draw (0,2) -- (2,2);
\draw (0,1.5) -- (1,1.5);
\draw (0,0) -- (0,2.75);
\draw (1,0) -- (1,.5);
\draw[color=white,line width=.7em] (1,1.75) -- (1,2.75);
\draw (1,1) -- (1,2.75);
\draw (2,1.5) -- (2,2.5);
\bra{2}{2.5};
\ket{2}{1.5};
\ket{1}{1};
\bra{1}{.5};
\end{tikzpicture}} 
\right]
= 
\left[
\scalebox{\scalefactor}{
\begin{tikzpicture}[baseline=-\the\dimexpr\fontdimen22\textfont2\relax+1.6cm, line width=.1em]
\draw (0,0) -- (0,3.25);
\draw (1,0) -- (1,.5);
\draw (1,1.75) -- (1,2.25);
\draw (1,3) -- (1,3.25);
\draw (2,1.75) -- (2,2.25);
\braid[border height=0cm,height=.75cm,width=1cm] at (1,3) s_1;
\braid[border height=0cm,height=.75cm,width=1cm] at (1,1.75) s_1^{-1};
\draw (0,2.25) -- (1,2.25);
\draw[color=white,line width=.7em] (.5,1.75) -- (1.5,1.75);
\draw (0,1.75) -- (2,1.75);
\bra{2}{3};
\ket{2}{1};
\ket{1}{1};
\bra{1}{.5};
\end{tikzpicture}} 
\right]
\stackrel{\eqref{eq:br-pentagon-1}}= 
\left[
\scalebox{\scalefactor}{
\begin{tikzpicture}[baseline=-\the\dimexpr\fontdimen22\textfont2\relax+1.6cm, line width=.1em]
\draw (0,0) -- (0,3.25);
\draw (1,0) -- (1,.5);
\draw (1,1.75) -- (1,2.25);
\draw (1,3) -- (1,3.25);
\draw (2,1.75) -- (2,2.25);
\draw (1,2.25) -- (2,2.25);
\draw (0,2) -- (1,2);
\draw[dashed] (1,1.75) -- (2,1.75);
\braid[border height=0cm,height=.75cm,width=1cm] at (1,3) s_1;
\braid[border height=0cm,height=.75cm,width=1cm] at (1,1.75) s_1^{-1};
\bra{2}{3};
\ket{2}{1};
\ket{1}{1};
\bra{1}{.5};
\end{tikzpicture}} 
\right]
\stackrel{\eqref{eq:remove-unitaries}}= 
\left[
\scalebox{\scalefactor}{
\begin{tikzpicture}[baseline=-\the\dimexpr\fontdimen22\textfont2\relax+1.5cm, line width=.1em]
\draw (0,0) -- (0,3);
\draw (1,0) -- (1,.5);
\draw (1,1) -- (1,1.75);
\draw (1,2.75) -- (1,3);
\draw (2,1.5) -- (2,1.75);
\draw (1,1.75) -- (2,1.75);
\draw (0,1.5) -- (1,1.5);
\braid[border height=0cm,height=1cm,width=1cm] at (1,2.75) s_1;
\bra{2}{2.75};
\ket{2}{1.5};
\ket{1}{1};
\bra{1}{.5};
\end{tikzpicture}} 
\right]
\stackrel[\text{of }F]{\text{reg.}}= 
\left[
\scalebox{\scalefactor}{
\begin{tikzpicture}[baseline=-\the\dimexpr\fontdimen22\textfont2\relax+1.5cm, line width=.1em]
\draw (0,0) -- (0,3);
\draw (1,0) -- (1,.5);
\draw (1,1) -- (1,2);
\draw (1,2.5) -- (1,3);
\draw (0,1.5) -- (1,1.5);
\ket{1}{2.5};
\bra{1}{2};
\ket{1}{1};
\bra{1}{.5};
\end{tikzpicture}} 
\right]
\nonumber\\
&= \hatA(F) \ot_\mathrm{min} \Kp(\L)
= \hatA(F) \hbt \Kp(\L) \ .
\end{align}

\medskip

\emph{Part 2/step 2:}
From 
$[(\hatA(\hat F) \habt \id_\L)\hat F(\id_\L \habt \Kp(\L))] 
	= \hatA(\hat F) \habt \Kp(\L)$ 
and \eqref{eq:boxtimes--via--anti-bt} we get 
\begin{align}
\Kp(\L) \hbt \hatA(\hat F) &= (\Kp(\L) \hbt \hatA(\hat F))^* 
= c_{L,L}(\hatA(\hat F) \habt \Kp(\L))^*c_{L,L}^{-1} 
\nonumber\\
&= c_{L,L}[(\hatA(\hat F) \habt \id_\L)\hat F(\id_\L \habt \Kp(\L))]^*c_{L,L}^{-1}
\nonumber\\
&= c_{L,L}[(\id_\L \habt \Kp(\L))\hat F^*(\hatA(\hat F) \habt \id_\L)]c_{L,L}^{-1}
\nonumber\\
&= [c_{L,L}(\id_\L \habt \Kp(\L))c_{L,L}^{-1} F c_{L,L}(A(F) \habt \id_\L)c_{L,L}^{-1}]
\nonumber\\
&= [(\Kp(\L) \hbt \id_\L) F (\id_\L \hbt \hatA(\hat F))]
\ .
\end{align}

\medskip

\emph{Part 2/step 3:}
Using the same technique as in the proof of 
part 3, it suffices to show that 
\begin{align}
[(\hatA(F) \hbt \Kp(\L) \hbt \id_\L)\tilde F(\id_\L \hbt \Kp(\L) \hbt \hatA(\hat F))] 
	= \hatA(F) \hbt \Kp(\L) \hbt \hatA(\hat F) \ .
\end{align}
This follows immediately from 
$\tilde F = F_{12}^*F_{23}F_{12}F_{23}^*$, 
together with steps 1 \& 2 of Part 1 and 
steps 1 \& 2 of Part 2.
\end{proof}

We are now prepared to turn $\hatA(F)$ into a 
bialgebra in $(\hatA(\Cat),\hbt)$, generalising 
\cite{Baaj-Skandalis:Unitaires}*{Prop.\,3.8}.

\begin{prop}
Let $\Cat$ be a regularly braided concrete monoidal $W^*$-category, 
let $L \in \Cat$ be an object with underlying 
Hilbert space $\L$, and let $F \in \U_{\Cat}(L \ot L)$ 
be a regular multiplicative unitary. 
For $a \in \hatA(F)$, put $\hat\Delta^\mathrm{op}(a) := F^*(\id_L \ot a)F$ 
and $\hat\Delta(a) := c_{L,L} \hat\Delta^\mathrm{op}(a) c_{L,L}^{-1}$. 
Then $(\hatA(F), \hat\Delta^\mathrm{op})$ and 
$(\hatA(F), \hat\Delta)$ are bi-simplifiable 
$C^*$-bialgebras in $(\hatA(\Cat),\habt)$, 
respectively, $(\hatA(\Cat),\hbt)$. 
\end{prop}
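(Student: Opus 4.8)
\emph{Strategy and set-up.}
The plan is to prove the statement for $(\hatA(F),\hat\Delta^{\mathrm{op}})$ in $(\hatA(\Cat),\habt)$ in detail; the statement for $(\hatA(F),\hat\Delta)$ in $(\hatA(\Cat),\hbt)$ is then entirely parallel, with the r\^oles of $c_{L,L}$ and $c_{L,L}^{-1}$ interchanged throughout. (Equivalently, one conjugates the whole $\habt$-picture with the unitary $c_{L,L}$: as in \eqref{eq:lem-on-braided-cross-product--1} and \eqref{eq:boxtimes--via--anti-bt}, $\mathrm{Ad}_{c_{L,L}}$ interchanges the two crossed-product injections of $\hatA(F)\habt\hatA(F)$, carries this $C^*$-algebra onto $\hatA(F)\hbt\hatA(F)$, and sends $\hat\Delta^{\mathrm{op}}$ to $\hat\Delta$; iterating by the appropriate braidings it intertwines the iterated comultiplications, so coassociativity and the Podle\'s conditions transfer.) I write $\alpha(a):=c_{L,L}^{-1}(\id_\L\ot a)c_{L,L}$ and $\beta(a):=\id_\L\ot a$ for the two injections realising $\hatA(F)\habt\hatA(F)=[\alpha(\hatA(F))\cdot\beta(\hatA(F))]$ as a crossed product, so that $\hatA(F)\habt 1=\alpha(\hatA(F))$ and $1\habt\hatA(F)=\beta(\hatA(F))$. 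First, $\hat\Delta^{\mathrm{op}}(a)=F^*(\id_L\ot a)F$ has the shape \eqref{eq:morphisms-in-hatA(C)} with $K:=L$ and the isometry $V:=F^*\in\U_\Cat(L\ot L)$ (and $\hat\Delta$ with $V:=c_{L,L}F^*\in\U_\Cat(L\ot L)$); that $\hat\Delta^{\mathrm{op}}$ actually maps $\hatA(F)$ into the multiplier algebra $\Malg(\hatA(F)\habt\hatA(F))$ and is non-degenerate will drop out of the Podle\'s conditions below.

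\emph{The Podle\'s conditions.}
For a generator $a=(\id\ot\omega)(F)$, introducing a fresh leg and applying the Pentagon equation \eqref{eq:br-pentagon-1} yields
\begin{align*}
\hat\Delta^{\mathrm{op}}(a) = (\id\ot\id\ot\omega)(F_{12}^* F_{23} F_{12})
	= (\id\ot\id\ot\omega)\bigl((c_{L,L}^{-1})_{12}\,F_{23}\,(c_{L,L})_{12}\,F_{23}\bigr)\ ,
\end{align*}
which exhibits $\hat\Delta^{\mathrm{op}}(a)$ as a closed span of products $\alpha(a')\beta(a'')$ with $a',a''\in\hatA(F)$; in particular $\hat\Delta^{\mathrm{op}}$ is valued in $\hatA(F)\habt\hatA(F)$ in a formal sense. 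The two Podle\'s identities
\begin{align*}
[\hat\Delta^{\mathrm{op}}(\hatA(F))\cdot\alpha(\hatA(F))] = \hatA(F)\habt\hatA(F) = [\hat\Delta^{\mathrm{op}}(\hatA(F))\cdot\beta(\hatA(F))]
\end{align*}
are then proved by the same kind of diagram chase as in Theorem~\ref{thm:F-as-multiplier}, Part~2: one unfolds each left-hand side into a closed span of slices of products of copies of $F$ and of braidings acting on a suitable $\L^{\otimes n}$, rewrites with \eqref{eq:br-pentagon-1}, collapses a $\Kp(\L)$-factor using regularity of $F$ (that $C(F)=\Kp(\L)$) and of the braiding, and removes the remaining unitaries via \eqref{eq:remove-unitaries}. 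These identities also show that $\hat\Delta^{\mathrm{op}}$ is a morphism in $\hatA(\Cat)$: from $[\hat\Delta^{\mathrm{op}}(\hatA(F))\cdot\beta(\hatA(F))]=\hatA(F)\habt\hatA(F)$, multiplying by $\alpha(\hatA(F))$ and using that $\alpha(\hatA(F))\subseteq\Malg(\hatA(F)\habt\hatA(F))$ non-degenerately, one gets $[\hat\Delta^{\mathrm{op}}(\hatA(F))\cdot(\hatA(F)\habt\hatA(F))]=\hatA(F)\habt\hatA(F)$, hence $\hat\Delta^{\mathrm{op}}$ really maps into $\Malg(\hatA(F)\habt\hatA(F))$ and is non-degenerate (multiplication from the left being handled by passing to adjoints, using self-adjointness of $\hatA(F)$, i.e.\ $C(F)=C(F)^*$).

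\emph{Coassociativity.}
Both $(\hat\Delta^{\mathrm{op}}\habt\id)\circ\hat\Delta^{\mathrm{op}}$ and $(\id\habt\hat\Delta^{\mathrm{op}})\circ\hat\Delta^{\mathrm{op}}$ are non-degenerate $^*$-homomorphisms $\hatA(F)\to\Malg(\hatA(F)\habt\hatA(F)\habt\hatA(F))$, so it suffices to check their equality on the generators $a=(\id\ot\omega)(F)$. Feeding the description $\hat\Delta^{\mathrm{op}}(a)=\alpha(a')\beta(a'')$ from the previous step into the definition \eqref{eq:hbt-on-morphisms} of $\habt$ on morphisms, one finds that $(\id\habt\hat\Delta^{\mathrm{op}})\hat\Delta^{\mathrm{op}}(a)$ and $(\hat\Delta^{\mathrm{op}}\habt\id)\hat\Delta^{\mathrm{op}}(a)$ are obtained from $\id_L\ot\id_L\ot a$ (with $a$ on the third leg, realised as a slice of $F$ over a fourth leg) by conjugating, respectively, with $F_{12}$ and then $F_{34}$, and with $F_{12}$ and then $F_{23}$ — each decorated with the braidings supplied by $\alpha,\beta$. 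The Pentagon equation \eqref{eq:br-pentagon-1}, applied on the last three legs and matched against those braidings, transforms one expression into the other. Together with the Podle\'s conditions above, this shows $(\hatA(F),\hat\Delta^{\mathrm{op}})$ is a bi-simplifiable $C^*$-bialgebra in $(\hatA(\Cat),\habt)$, and the parallel (or conjugated) argument gives the assertion for $(\hatA(F),\hat\Delta)$ in $(\hatA(\Cat),\hbt)$.

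\emph{Main obstacle.}
The genuine difficulty is the braided bookkeeping in the coassociativity step: unlike in the non-braided case, $\hat\Delta^{\mathrm{op}}\habt\id$ and $\id\habt\hat\Delta^{\mathrm{op}}$ are not simply ``apply $\hat\Delta^{\mathrm{op}}$ in one tensor slot'' but involve the twisted injections $\alpha,\beta$, so one must keep careful track of how the braidings coming from $\alpha$ interact with the braidings already present in \eqref{eq:br-pentagon-1}. This is exactly the point where the precise ordering of $c_{L,L}$ and $c_{L,L}^{-1}$ in the braided Pentagon equation is used, and it is what forces $\hat\Delta^{\mathrm{op}}$ (rather than $\hat\Delta$) to be the coassociative comultiplication for $\habt$. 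Everything else is a routine, if lengthy, diagram chase of the type already carried out in the proofs above.
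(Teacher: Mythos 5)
Your proposal is correct and follows essentially the same route as the paper: exhibit $\hat\Delta^\mathrm{op}$ in the form \eqref{eq:morphisms-in-hatA(C)}, establish the two Podle\'s identities by slicing $F_{12}^*F_{23}F_{12}$ and invoking the braided Pentagon equation, regularity of $F$ and of the braiding, Lemma \ref{lem:on-braided-tensor-products} and Theorem \ref{thm:F-as-multiplier}, deduce non-degeneracy from the Podle\'s conditions, and transfer to $(\hatA(F),\hat\Delta)$ via conjugation by $c_{L,L}$. The only difference is that you additionally sketch the coassociativity check (which the paper leaves implicit), and your formula $F_{12}^*F_{23}F_{12}=(c_{L,L}^{-1})_{12}F_{23}(c_{L,L})_{12}F_{23}$ matches the convention actually used in the paper's computation.
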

\begin{proof}
First, we need to show that the sets 
$(\hatA(F) \habt \id_L) \cdot \hat\Delta^\mathrm{op}(\hatA(F))$ 
and 
$(\id_L \habt \hatA(F)) \cdot \hat\Delta^\mathrm{op}(\hatA(F))$ 
are dense in $\hatA(F) \habt \hatA(F)$. 
We have 
\begin{align}
[(\hatA(F) \habt \id_L) \cdot \hat\Delta^\mathrm{op}(\hatA(F))] 
&= [c_{L,L}^{-1} \langle\L|_3 F_{23} |\L\rangle_3 c_{L,L} \cdot 
	\langle\L|_3 F_{12}^*F_{23}F_{12} |\L\rangle_3]
	\nonumber\\
&= [c_{L,L}^{-1} \langle\L|_3 F_{23} |\L\rangle_3 \cdot 
	\langle\L|_3 F_{23} (c_{L,L})_{12} F_{23} |\L\rangle_3]
	\nonumber\\
&= [c_{L,L}^{-1} \langle\L|_3 (\hatA(F) \ot_\mathrm{min} \Kp(\L))_{23} \cdot 
	F_{23} (c_{L,L})_{12} F_{23} |\L\rangle_3]
	\nonumber\\
&\stackrel[\ref{lem:on-braided-tensor-products}]{\text{Lem.}}= 
	[c_{L,L}^{-1} \langle\L|_3 (\hatA(F) \hbt \Kp(\L))_{23} \cdot 
	F_{23} (c_{L,L})_{12} F_{23} |\L\rangle_3]
	\nonumber\\
&\stackrel[\ref{thm:F-as-multiplier}]{\text{Thm.}}= 
	[c_{L,L}^{-1} \langle\L|_3 (\hatA(F) \hbt \Kp(\L))_{23} 
	(c_{L,L})_{12} F_{23} |\L\rangle_3]
	\nonumber\\
&= [c_{L,L}^{-1} \iota_2(\hatA(F)) c_{L,L} \cdot \hatA(F)]
= \hatA(F) \habt \hatA(F) \ ,
\end{align}
where we used standard leg notation, and put 
$|\L\rangle := \B(\C,\L)$ and $\langle\L| := \B(\L,\C)$. 
Concerning the denseness of the other set, we have 
\begingroup\allowdisplaybreaks
\begin{align}
&[(\id_L \habt \hatA(F)) \cdot \hat\Delta^\mathrm{op}(\hatA(F))]
\nonumber\\*
&= 
\left[
\scalebox{\scalefactor}{
\begin{tikzpicture}[baseline=-\the\dimexpr\fontdimen22\textfont2\relax+1.5cm, line width=.1em]
\draw (0,0) -- (0,3);
\draw (1,0) -- (1,3);
\draw (2,.25) -- (2,1.25);
\draw (2,1.75) -- (2,2.75);
\draw (0,.5) -- (1,.5);
\draw (1,.75) -- (2,.75);
\draw[dashed] (0,1) -- (1,1);
\draw (1,2.25) -- (2,2.25);
\bra{2}{2.75};
\ket{2}{1.75};
\bra{2}{1.25};
\ket{2}{.25};
\end{tikzpicture}} 
\right]
\stackrel{\eqref{eq:br-pentagon-1}}= 
\left[
\scalebox{\scalefactor}{
\begin{tikzpicture}[baseline=-\the\dimexpr\fontdimen22\textfont2\relax+1.5cm, line width=.1em]
\draw (0,0) -- (0,3);
\draw (1,0) -- (1,3);
\draw (2,.25) -- (2,1.25);
\draw (2,1.75) -- (2,2.75);
\draw[color=white,line width=.7em] (.5,1) -- (1.5,1);
\draw (0,1) -- (2,1);
\draw (1,.5) -- (2,.5);
\draw (1,2.25) -- (2,2.25);
\bra{2}{2.75};
\ket{2}{1.75};
\bra{2}{1.25};
\ket{2}{.25};
\end{tikzpicture}} 
\right]
\stackrel[\text{braiding}]{\text{reg. of}}= 
\left[
\scalebox{\scalefactor}{
\begin{tikzpicture}[baseline=-\the\dimexpr\fontdimen22\textfont2\relax+1.5cm, line width=.1em]
\draw (0,0) -- (0,3);
\draw (1,0) -- (1,3);
\draw (2,.25) -- (2,1.25);
\draw (3,.25) -- (3,1.25);
\braid[border height=.5cm,height=1cm,width=1cm] at (2,2.75) s_1;
\draw[color=white,line width=.7em] (1.5,.75) -- (2.5,.75);
\draw (1,.75) -- (3,.75);
\draw (1,1.25) -- (2,1.25);
\draw[color=white,line width=.7em] (.5,2.25) -- (1.5,2.25);
\draw (0,2.25) -- (2,2.25);
\bra{2}{2.75};
\bra{3}{2.75};
\ket{2}{.25};
\ket{3}{.25};
\end{tikzpicture}} 
\right]
\stackrel{\eqref{eq:br-pentagon-1}}= 
\left[
\scalebox{\scalefactor}{
\begin{tikzpicture}[baseline=-\the\dimexpr\fontdimen22\textfont2\relax+1.5cm, line width=.1em]
\draw (0,0) -- (0,3);
\draw (1,0) -- (1,3);
\draw (2,.25) -- (2,1.25);
\draw (3,.25) -- (3,1.25);
\braid[border height=.5cm,height=1cm,width=1cm] at (2,2.75) s_1;
\draw (1,1) -- (2,1);
\draw[dashed] (2,.75) -- (3,.75);
\draw (2,1.25) -- (3,1.25);
\draw[color=white,line width=.7em] (.5,2.25) -- (1.5,2.25);
\draw (0,2.25) -- (2,2.25);
\bra{2}{2.75};
\bra{3}{2.75};
\ket{2}{.25};
\ket{3}{.25};
\end{tikzpicture}} 
\right]
\nonumber\\
&\stackrel{\eqref{eq:remove-unitaries}}= 
\left[
\scalebox{\scalefactor}{
\begin{tikzpicture}[baseline=-\the\dimexpr\fontdimen22\textfont2\relax+1.5cm, line width=.1em]
\draw (0,0) -- (0,3);
\draw (1,0) -- (1,3);
\draw (2,.25) -- (2,1.25);
\draw (2,2.25) -- (2,2.75);
\draw (3,.75) -- (3,1.25);
\braid[border height=.0cm,height=1cm,width=1cm] at (2,2.25) s_1;
\draw (1,.75) -- (2,.75);
\draw (2,1.25) -- (3,1.25);
\draw[color=white,line width=.7em] (.5,2.25) -- (1.5,2.25);
\draw (0,2.25) -- (2,2.25);
\bra{2}{2.75};
\bra{3}{2.25};
\ket{2}{.25};
\ket{3}{.75};
\end{tikzpicture}} 
\right]
\stackrel[\text{of }F]{\text{reg.}}= 
\left[
\scalebox{\scalefactor}{
\begin{tikzpicture}[baseline=-\the\dimexpr\fontdimen22\textfont2\relax+1.5cm, line width=.1em]
\draw (0,0) -- (0,3);
\draw (1,0) -- (1,3);
\draw (2,.25) -- (2,1.25);
\draw (2,1.75) -- (2,2.75);
\draw (1,.75) -- (2,.75);
\draw[color=white,line width=.7em] (.5,2.25) -- (1.5,2.25);
\draw (0,2.25) -- (2,2.25);
\bra{2}{2.75};
\ket{2}{1.75};
\bra{2}{1.25};
\ket{2}{.25};
\end{tikzpicture}} 
\right]
= \hatA(F) \habt \hatA(F) \ .
\end{align}
\endgroup
Clearly, $\hat\Delta^\mathrm{op}$ is of the form 
\ref{eq:morphisms-in-hatA(C)}. That it is a 
non-degenerate $^*$-homomorphism is immediate from 
the Podle{\'s} conditions. 
Hence, $(\hatA(F),\hat\Delta^\mathrm{op})$ 
is a bi-simplifiable $C^*$-bialgebra in $(\hatA(\Cat),\habt)$.
The statement about $(\hatA(F),\hat\Delta)$ 
follows using the identity 
$\hatA(F) \hbt \hatA(F) = c_{L,L} (\hatA(F) \habt \hatA(F)) c_{L,L}^{-1}$.
\end{proof}
\fussy

\section{Yetter-Drinfeld categories of regular braided multiplicative unitaries}
\label{sec:YD-categories}

Our starting point in the last section was a 
regularly braided concrete monoidal $W^*$-category. 
We had seen, that the category $\Hilb$ with the 
tensor flip $\Sigma$ is an example of such a category. 
The purpose of the present section is to provide 
more examples: 
We know already that the category of Yetter-Drinfeld 
modules over a regular multiplicative unitary in a regularly braided 
concrete monoidal $W^*$-category 
is unitarily braided (Remark \ref{rmk:regular-MU}). 
Here, we will see that its braiding is also regular.

\begin{lem}\label{lem:regularity-of-coreps-and-reps}
Let $\Cat$ be a \altern{semi-regularly}{regularly} braided concrete monoidal $W^*$-category, 
let $L \in \Cat$ be an object with underlying 
Hilbert space $\L$, and let $F \in \U_{\Cat}(L \ot L)$ 
be a \altern{semi-regular}{regular} 
multiplicative unitary. 
We have: 
\begin{enumerate}
\item 
If $(H,U) \in \Cat^F$, then 
$\Kp(\H,\L) \inceq 
	[(\id\ot\omega)(c_{L,H}^{-1} U)\,|\,\omega\in\B(\L,\H)_*]$.
\item 
If $(K,V) \in {}_F\Cat$, then 
$\Kp(\L,\H) \inceq 
	[(\id\ot\omega)(c_{K,L}^{-1} V)\,|\,\omega\in\B(\H,\L)_*]$.
\end{enumerate}
\end{lem}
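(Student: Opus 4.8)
The plan is to follow the graphical method of Sections~\ref{ssec:to-C*-algebras} and~\ref{ssec:hat-A(F)-as-C*-bialgebra}, in particular the proof that $C(F)$ is a subalgebra, with the Pentagon equation replaced by the corepresentation equation~\eqref{eq:br-pentagon-2} in Part~1 and by the representation equation in Part~2. I will prove Part~1; Part~2 is its mirror image and follows either by the completely analogous chase, or by applying Part~1 to $\hat F$ in $\Cat^\mathrm{rev}$ --- which is again \altern{semi-regularly}{regularly} braided and carries $\hat F$ as a \altern{semi-regular}{regular} multiplicative unitary (Remark~\ref{rmk:regular-MU}) --- and taking adjoints to pass between $\Kp(\L,\H)$ and $\Kp(\H,\L)$.

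Fix $(H,U)\in\Cat^F$ and put $S:=[(\id\ot\omega)(c_{L,H}^{-1}U)\mid\omega\in\B(\L,\H)_*]\subseteq\B(\H,\L)$. The single genuine computation is the identity $[C(F)\cdot S]=S$ (respectively $[C(F)\cdot S]\subseteq S$, if only semi-regularity is available), exhibiting $S$ as a non-degenerate left module over $C(F)$; here $C(F)\cdot S$ makes sense because $C(F)\subseteq\B(\L)$ while the slices constituting $S$ have codomain $\L$. I would establish it exactly as $[C(F)C(F)]\subseteq C(F)$ was: write $[C(F)\cdot S]$ as the closed span of slices of a single three-leg diagram assembled from one copy of $c_{L,L}^{-1}F$, one copy of $c_{L,H}^{-1}U$, and the appropriate bras and kets; use \altern{semi-regularity}{regularity} of the braiding (Definition~\ref{df:regularity-for-braiding}, applied to the pair of objects $L,H$ --- legitimate since by hypothesis the braiding is \altern{semi-regular}{regular} for \emph{all} pairs) to replace a bra--ket by a braided bra--ket; reassociate and rebraid until an instance of~\eqref{eq:br-pentagon-2} is exposed; apply~\eqref{eq:br-pentagon-2}; and finally remove one copy of $U$ using~\eqref{eq:remove-unitaries}, landing back on $S$.

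Granting this, the lemma is a matter of soft arguments. First, $S$ acts non-degenerately: since $c_{L,H}^{-1}U$ is unitary, the proof of Part~2 of Proposition~\ref{prop:F-slices-are-non-deg-algebras} applies verbatim to show that for each $0\neq v\in\H$ there is a slice $s\in S$ with $sv\neq0$, i.e.\ $\bigcap_{s\in S}\ker s=0$, equivalently $[S^*\L]=\H$. Hence $[\Kp(\L)\cdot S]$ contains all $|\zeta\rangle\langle s^*\theta|$ with $\zeta,\theta\in\L$ and $s\in S$, thus all $|\zeta\rangle\langle v|$ with $\zeta\in\L$, $v\in\H$, so $[\Kp(\L)\cdot S]=\Kp(\H,\L)$. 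Since $F$ is semi-regular, $\Kp(\L)\subseteq C(F)$, and therefore
\[
  \Kp(\H,\L)=[\Kp(\L)\cdot S]\subseteq[C(F)\cdot S]\subseteq S ,
\]
which is Part~1 in the semi-regular case. If everything is regular, then $C(F)=\Kp(\L)$ and the chase above is an equality, so $S=[C(F)\cdot S]=[\Kp(\L)\cdot S]\subseteq\Kp(\H,\L)$; combined with the previous inclusion, $S=\Kp(\H,\L)$.

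The hard part will be the braided bookkeeping inside that one chase: equation~\eqref{eq:br-pentagon-2} carries a half-braided leg, and in the diagram for $[C(F)\cdot S]$ these implicit crossings have to be moved past the explicit $c_{L,H}^{-1}$ conjugating $U$, past the $c_{L,L}^{-1}$ inside $C(F)$, and past the crossing produced by (semi-)regularity of the braiding, in the correct order and on the correct sides, before a literal instance of~\eqref{eq:br-pentagon-2} can be read off; in the merely semi-regular case one also has to check that every step keeps its inclusion pointing the right way. The operator-theoretic ingredients --- norm-closedness, compactness of products, non-degeneracy --- are routine by comparison.
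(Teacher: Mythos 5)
Your strategy is sound and your soft arguments are all correct, but you organize the proof differently from the paper, so let me compare. The paper runs one diagram chase directly on the slice space $S:=[(\id\ot\omega)(c_{L,H}^{-1}U)]$: it rewrites a slice of $c_{L,H}^{-1}U$ through the half\nobreakdash-braided copy of $U$ acting on the first and third legs of $H\ot L\ot L$, expands that copy via \eqref{eq:br-pentagon-2} as $U_{12}^{*}F_{23}U_{12}F_{23}^{*}$, strips the two outer unitaries with \eqref{eq:remove-unitaries}, and only then invokes (semi-)regularity of $F$ and of the braiding inside the picture, landing on $\Kp(\H,\L)\cdot[(\id\ot\omega)(U)]$ and finishing with non-degeneracy of the slice algebra of $U$. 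You instead isolate the module property $[C(F)\cdot S]\inceq S$ and combine it with the soft inclusion $\Kp(\L)\inceq C(F)$ and with non-degeneracy of $S$ itself in the form $[S^{*}\L]=\H$; this is closer to the Baaj--Skandalis organization and has the merit of confining the regularity of $F$ to a set-theoretic inclusion rather than weaving it into the diagram, at the price of making the diagrammatic step (the module property) carry both the corepresentation identity and the braiding-regularity. Your handling of the equality direction in the regular case is also right, since the only genuine inclusion in the module chase comes from semi-regularity of the braiding and becomes an equality under regularity. The one step you leave unexecuted does close up, but note the concrete point on which it hinges: when you resolve the middle rank-one operator in $\Kp(\H,\L)$ by braiding-regularity, you must choose the crossing of the same handedness as the $c_{L,H}^{-1}$ conjugating $U$ (Remark \ref{rmk:regular-braiding} guarantees both crossings are available), so that the hexagon identity merges the two crossings into $c_{L\ot L,H}^{-1}$ and naturality of the braiding then exposes a literal $F_{23}U_{12}$ to which \eqref{eq:br-pentagon-2} applies; with the wrong choice no instance of the corepresentation equation appears. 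Your reduction of Part~2 to Part~1 via $\hat F$ in $\Cat^{\mathrm{rev}}$ and adjoints is exactly the paper's, which uses $\hat V=c_{K,L}^{-1}V^{*}c_{K,L}$ as the corepresentation of $\hat F$.
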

\begin{proof}
\emph{Part 1:}
\begingroup\allowdisplaybreaks
\begin{align}
&[(\id\ot\omega)(c_{L,H}^{-1} U)\,|\,\omega\in\B(\L,\H)_*] \nonumber\\*
&= 
\left[
\scalebox{\scalefactor}{
\begin{tikzpicture}[baseline=-\the\dimexpr\fontdimen22\textfont2\relax, line width=.1em]
\draw (0,-1.6) -- (0,.6)
	.. controls (0,1.2) and (1,1.2) .. (1,1.8);
\draw (1,-.6) -- (1,.6) 
	.. controls (1,1.1) and (2,1.1) .. (2,1.6);
\draw[color=white,line width=.7em] (2,-.6) -- (2,.6) 
	.. controls (2,1.35) and (0,1.35) .. (0,2.1);
\draw (2,-.6) -- (2,.6) 
	.. controls (2,1.35) and (0,1.35) .. (0,2.1);
\braid[border height=0cm,height=.75cm,width=1cm] at (1,-.6) s_1^{-1};
\draw[color=white,line width=.7em] (.5,0) -- (1.5,0);
\draw (0,0) -- (2,0);
\bra{1}{1.9};
\bra{2}{1.7};
\ket{1}{-1.45};
\ket{2}{-1.45};
\node at (-.25,0) {$U$};
\node at (0,-1.85) {$H$};
\node at (1,-1.85) {$L$};
\node at (2,-1.85) {$L$};
\end{tikzpicture}} 
\right]
\stackrel{\eqref{eq:br-pentagon-2}}= 
\left[
\scalebox{\scalefactor}{
\begin{tikzpicture}[baseline=-\the\dimexpr\fontdimen22\textfont2\relax, line width=.1em]
\draw (0,-1.6) -- (0,.6)
	.. controls (0,1.2) and (1,1.2) .. (1,1.8);
\draw (1,-.6) -- (1,.6) 
	.. controls (1,1.1) and (2,1.1) .. (2,1.6);
\draw[color=white,line width=.7em] (2,-.6) -- (2,.6) 
	.. controls (2,1.35) and (0,1.35) .. (0,2.1);
\draw (2,-.6) -- (2,.6) 
	.. controls (2,1.35) and (0,1.35) .. (0,2.1);
\braid[border height=0cm,height=.75cm,width=1cm] at (1,-.6) s_1^{-1};
\draw[dashed] (1,-.6) -- (2,-.6);
\draw (0,-.2) -- (1,-.2);
\draw (1,.2) -- (2,.2);
\draw[dashed] (0,.6) -- (1,.6);
\bra{1}{1.9};
\bra{2}{1.7};
\ket{1}{-1.45};
\ket{2}{-1.45};
\node at (-.3,.6) {$U^*$};
\node at (-.25,-.2) {$U$};
\node at (2.25,.2) {$F$};
\node at (2.3,-.6) {$F^*$};
\node at (0,-1.85) {$H$};
\node at (1,-1.85) {$L$};
\node at (2,-1.85) {$L$};
\end{tikzpicture}} 
\right]
\stackrel{\eqref{eq:remove-unitaries}}= 
\left[
\scalebox{\scalefactor}{
\begin{tikzpicture}[baseline=-\the\dimexpr\fontdimen22\textfont2\relax+.3cm, line width=.1em]
\draw (0,-.9) -- (0,.4)
	.. controls (0,.9) and (1,.95) .. (1,1.5);
\draw (1,-.6) -- (1,.2) 
	.. controls (1,.7) and (2,.7) .. (2,1.2);
\draw[color=white,line width=.7em] (2,-.1) -- (2,.2) 
	.. controls (2,.9) and (0,.9) .. (0,2);
\draw (2,-.1) -- (2,.2) 
	.. controls (2,.9) and (0,.9) .. (0,2);
\draw (0,-.2) -- (1,-.2);
\draw (1,.2) -- (2,.2);
\bra{1}{1.6};
\bra{2}{1.3};
\ket{1}{-.7};
\ket{2}{-.2};
\node at (-.25,-.2) {$U$};
\node at (2.25,.2) {$F$};
\node at (0,-1.15) {$H$};
\node at (1,-1.15) {$L$};
\node at (2,-.65) {$L$};
\end{tikzpicture}} 
\right]
\stackrel[\text{of }F]{\text{(semi-)reg.}}\rinceq 
\left[
\scalebox{\scalefactor}{
\begin{tikzpicture}[baseline=-\the\dimexpr\fontdimen22\textfont2\relax+.3cm, line width=.1em]
\draw (0,-.9) -- (0,.4)
	.. controls (0,1.2) and (1,1.2) .. (1,1.5);
\draw (1,-.6) -- (1,.2);
\draw[color=white,line width=.7em] (1,.8) .. controls (1,1.3) and (0,1.3) .. (0,1.9);
\draw (1,.8) .. controls (1,1.3) and (0,1.3) .. (0,1.9);
\draw (0,-.2) -- (1,-.2);
\bra{1}{1.6};
\ket{1}{.8};
\bra{1}{.3};
\ket{1}{-.7};
\node at (-.25,-.2) {$U$};
\node at (0,-1.15) {$H$};
\node at (1,-1.15) {$L$};
\end{tikzpicture}} 
\right]
\nonumber\\
&\stackrel[\text{braiding}]{\text{(semi-)reg. of}}\rinceq 
\left[
\scalebox{\scalefactor}{
\begin{tikzpicture}[baseline=-\the\dimexpr\fontdimen22\textfont2\relax, line width=.1em]
\draw (0,-.9) -- (0,.2);
\draw (1,-.6) -- (1,.2);
\draw (0,.7) -- (0,1.3);
\draw (0,-.2) -- (1,-.2);
\ket{0}{.8};
\bra{0}{.3};
\bra{1}{.3};
\ket{1}{-.7};
\node at (-.25,-.2) {$U$};
\node at (0,-1.15) {$H$};
\node at (1,-1.15) {$L$};
\end{tikzpicture}} 
\right]
= \Kp(\H,\L) \ ,
\end{align}
\endgroup
where in the last step we made use of the non-degeneracy of the algebra 
$[(\id\ot\omega)(U)\,|\,\omega\in\B(\L)_*]$, 
which one shows in the same way as 
Proposition \ref{prop:F-slices-are-non-deg-algebras}. 

\medskip

\emph{Part 2:} 
As $\hat F$ is a \altern{semi-regular}{regular} multiplicative unitary in 
$\Cat^\mathrm{rev}$ and as, with $\hat V := c_{K,L}^{-1}V^*c_{K,L}$, 
we have $(K,\hat V) \in (\Cat^\mathrm{rev})^{\hat F}$, 
the statement of part 1 implies 
\begin{align}
\Kp(\K,\L) 
&\inceq [(\id\ot\omega)(c_{K,L}\hat V)\,|\,\omega \in\B(\L,\K)_*] 
= [(\id\ot\omega)(V^*c_{K,L})\,|\,\omega \in\B(\L,\K)_*] 
\nonumber\\
&\ \ = [(\id\ot\omega)(c_{K,L}^{-1}V)\,|\,\omega \in\B(\K,\L)_*]^* \ ,
\end{align}
showing the claim.
\end{proof}

\begin{prop}\label{prop:YD-cats-of-good-regular-MUs-have-regular-braiding}
Let $\Cat$ be a \altern{semi-regularly}{regularly} braided 
concrete monoidal $W^*$-category, 
let $L \in \Cat$ be an object with underlying 
Hilbert space $\L$, and let $F \in \U_{\Cat}(L \ot L)$ 
be a \altern{semi-regular}{regular} multiplicative unitary. 
Then the braiding $\Phi$ on the category $\FYDF$ is 
\altern{semi-regular}{regular}\ . 
\end{prop}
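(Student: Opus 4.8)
Throughout, write $\mathcal H=(H,U_H,V_H)$ and $\mathcal K=(K,U_K,V_K)$ for two objects of $\FYDF$, with underlying Hilbert spaces $\H$ and $\K$. Since $\FYDF$ is a concrete monoidal $W^*$-category, and since its braiding $\Phi$ (recalled in Section~\ref{ssec:br-mult-unitaries}) is unitary, the claim to prove is that
$$\Kp(\H,\K)\inceq\bigl[(\id\ot\omega)(\Phi_{H,K})\,|\,\omega\in\B(\K,\H)_*\bigr]$$
for every pair of objects, where \enquote{$\inceq$} reads as $\subseteq$ in the semi-regular case and as $=$ in the regular case. Recall that $\Phi_{H,K}=c_{K,H}^{-1}\circ(V_K\ast U_H)$; as this is, by construction, the underlying map of a morphism $c_{K,H}^{-1}\circ(V_K\ast U_H)\colon H\ot K\to K\ot H$ of $\Cat$, the whole computation may be carried out in the graphical calculus of $\Cat$.

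The plan is to evaluate the closed span on the right-hand side by a graphical manipulation, much as in the proof of Lemma~\ref{lem:regularity-of-coreps-and-reps}. First I would substitute $\Phi_{H,K}=c_{K,H}^{-1}\circ(V_K\ast U_H)$ and then unfold $V_K\ast U_H$ through its defining identity \eqref{eq:in-lem---good--indep-of-2nd-leg}: inserting an auxiliary $L$-leg in the middle expresses it, up to slicing that (inert) leg by an arbitrary normal state, in terms of $U_H,U_H^*$ on the first two legs and $V_K,V_K^*$ on the last two. Composing with $c_{K,H}^{-1}$, which in the three-leg picture swaps the $\H$- and $\K$-strands and so, passing them by the $\L$-strand, produces — via the hexagon identities — additional half-braidings involving $\L$, one arrives at a diagram built from $U_H$, $U_H^*$, $V_K$, $V_K^*$ and the braiding $c$ of $\Cat$, of whose slices the span is sought. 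I would then manipulate it using the corepresentation relation \eqref{eq:br-pentagon-2} for $(H,U_H)\in\Cat^F$ and the analogous representation relation for $(K,V_K)\in{}_F\Cat$ to disentangle the $U_H$-part from the $V_K$-part, cancel the unitaries that no longer carry open legs by \eqref{eq:remove-unitaries}, and absorb the residual half-braidings using the (semi-)regularity of $c$ on $\Cat$ (and Remark~\ref{rmk:regular-braiding} for $\Cat^\mathrm{rev}$), so that the span is presented as a product $\bigl[(\id\ot\omega_2)(c_{K,L}^{-1}V_K)\cdot(\id\ot\omega_1)(c_{L,H}^{-1}U_H)\bigr]$ of the two spans occurring in Lemma~\ref{lem:regularity-of-coreps-and-reps}.

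Finally, Lemma~\ref{lem:regularity-of-coreps-and-reps}(1) applied to $(H,U_H)$ and Lemma~\ref{lem:regularity-of-coreps-and-reps}(2) applied to $(K,V_K)$ identify these two spans, respectively, with $\Kp(\H,\L)$ and $\Kp(\L,\K)$ (an inclusion \enquote{$\rinceq$} in the semi-regular, an equality in the regular case), while $[\Kp(\L,\K)\cdot\Kp(\H,\L)]=\Kp(\H,\K)$ holds unconditionally; as in that lemma, the non-degeneracy of the slice algebras $[(\id\ot\omega)(U_H)]$ and $[(\id\ot\omega)(V_K)]$ may be needed to complete the identification. Since every remaining manipulation is an honest equality — the only possible strict inclusions in the semi-regular case being the two invocations of Lemma~\ref{lem:regularity-of-coreps-and-reps} (hence, ultimately, of the (semi-)regularity of $c$ and of $F$) — both halves of the statement come out simultaneously. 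I expect the main difficulty to be exactly the graphical bookkeeping of the middle step: arranging the strands so that the two Pentagon-type relations can be applied at once, correctly tracking which slices land in $\Kp(\H,\L)$ and which in $\Kp(\L,\K)$, and in particular checking that the half-braidings generated by carrying the $\H$- and $\K$-strands past the auxiliary $\L$-leg are all accounted for, so that exactly one compact-operator factor of each type survives at the end.
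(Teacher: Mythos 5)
Your overall strategy is the right one and is essentially the one used in the paper: unfold $V_K\ast U_H$ through \eqref{eq:in-lem---good--indep-of-2nd-leg}, strip the outer $U_H^*$ and $V_K^*$ with \eqref{eq:remove-unitaries}, and reduce to Lemma \ref{lem:regularity-of-coreps-and-reps}. (The Pentagon-type relations \eqref{eq:br-pentagon-2} are not needed at this stage; they are only used inside the proof of that lemma. After removing the outer unitaries one is already left with $(V_K)_{23}(U_H)_{12}$, suitably braided and sliced.)

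The step that fails as stated is the factorization of the remaining span as $\bigl[(\id\ot\omega_2)(c_{K,L}^{-1}V_K)\cdot(\id\ot\omega_1)(c_{L,H}^{-1}U_H)\bigr]$. In the three-leg picture the braiding $c_{K,H}^{-1}$ occurring in $\Phi$ decomposes, by the hexagon, into a crossing of the $K$-strand past the $L$-strand followed by a crossing of the $K$-strand past the $H$-strand; the $L$- and $H$-strands never cross. The first crossing does combine with $V_K$ to produce second-leg slices of $c_{K,L}^{-1}V_K$, hence the $\Kp(\L,\K)$-factor via Lemma \ref{lem:regularity-of-coreps-and-reps}(2). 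But the residual crossing attached to the $U_H$-half is $c_{K,H}$, not $c_{L,H}$, and one cannot trade one for the other: that would require naturality of $c$ with respect to the extracted slice of $V_K$ (an arbitrary bounded operator $\L\to\K$, not a morphism of $\Cat$). So Lemma \ref{lem:regularity-of-coreps-and-reps}(1) is not applicable to the $U_H$-half. The proof in the paper avoids this: the $U_H$-half contributes only plain slices $(\id\ot\omega)(U_H)\in\B(\H)$, the residual $c_{K,H}$-crossing (now sandwiched between a ket of $\K$ and a bra of $\H$) is converted into $\Kp(\H,\K)$ directly by the (semi-)regularity of the ambient braiding $c$, and one concludes with $[\Kp(\H,\K)\cdot[(\id\ot\omega)(U_H)]]=\Kp(\H,\K)$ using the non-degeneracy of the slice algebra of $U_H$ --- the same closing move as in Lemma \ref{lem:regularity-of-coreps-and-reps}(1). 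Your remark that the residual half-braidings should be absorbed by the (semi-)regularity of $c$ is exactly the right instinct; it just has to replace, rather than feed into, the appeal to part (1) of the lemma.
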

\begin{proof}
The proof is similar to that of statement 1 in 
Lemma \ref{lem:regularity-of-coreps-and-reps}, but 
employing statement 2 of 
Lemma \ref{lem:regularity-of-coreps-and-reps} 
instead of \altern{semi-regularity}{regularity} of $F$: 
If $(H,U_H,V_H), (K,U_K,V_K) \in \FYDF$, then  
\begingroup\allowdisplaybreaks
\begin{align}
&[(\id\ot\omega)(\Phi_{K,H})\,|\,\omega\in\B(\K,\H)_*] \nonumber\\*
&= 
\left[
\scalebox{\scalefactor}{
\begin{tikzpicture}[baseline=-\the\dimexpr\fontdimen22\textfont2\relax, line width=.1em]
\draw (0,-1.6) -- (0,.6)
	.. controls (0,1.2) and (1,1.2) .. (1,1.8);
\draw (1,-.6) -- (1,.6) 
	.. controls (1,1.1) and (2,1.1) .. (2,1.6);
\draw[color=white,line width=.7em] (2,-.6) -- (2,.6) 
	.. controls (2,1.35) and (0,1.35) .. (0,2.1);
\draw (2,-.6) -- (2,.6) 
	.. controls (2,1.35) and (0,1.35) .. (0,2.1);
\braid[border height=0cm,height=.75cm,width=1cm] at (1,-.6) s_1^{-1};
\draw[color=white,line width=.7em] (.5,0) -- (1.5,0);
\draw (0,0) -- (2,0);
\bra{1}{1.9};
\bra{2}{1.7};
\ket{1}{-1.45};
\ket{2}{-1.45};
\node at (-.7,0) {$V_K\ast U_H$};
\node at (0,-1.85) {$H$};
\node at (1,-1.85) {$K$};
\node at (2,-1.85) {$L$};
\end{tikzpicture}} 
\right]
= 
\left[
\scalebox{\scalefactor}{
\begin{tikzpicture}[baseline=-\the\dimexpr\fontdimen22\textfont2\relax, line width=.1em]
\draw (0,-1.6) -- (0,.6)
	.. controls (0,1.2) and (1,1.2) .. (1,1.8);
\draw (1,-.6) -- (1,.6) 
	.. controls (1,1.1) and (2,1.1) .. (2,1.6);
\draw[color=white,line width=.7em] (2,-.6) -- (2,.6) 
	.. controls (2,1.35) and (0,1.35) .. (0,2.1);
\draw (2,-.6) -- (2,.6) 
	.. controls (2,1.35) and (0,1.35) .. (0,2.1);
\braid[border height=0cm,height=.75cm,width=1cm] at (1,-.6) s_1^{-1};
\draw[dashed] (1,-.6) -- (2,-.6);
\draw (0,-.2) -- (1,-.2);
\draw (1,.2) -- (2,.2);
\draw[dashed] (0,.6) -- (1,.6);
\bra{1}{1.9};
\bra{2}{1.7};
\ket{1}{-1.45};
\ket{2}{-1.45};
\node at (-.3,.6) {$U_H^*$};
\node at (-.25,-.2) {$U_H$};
\node at (2.25,.2) {$V_K$};
\node at (2.3,-.6) {$V_K^*$};
\node at (0,-1.85) {$H$};
\node at (1,-1.85) {$K$};
\node at (2,-1.85) {$L$};
\end{tikzpicture}} 
\right]
\stackrel{\eqref{eq:remove-unitaries}}= 
\left[
\scalebox{\scalefactor}{
\begin{tikzpicture}[baseline=-\the\dimexpr\fontdimen22\textfont2\relax+.3cm, line width=.1em]
\draw (0,-.9) -- (0,.4)
	.. controls (0,.9) and (1,.95) .. (1,1.5);
\draw (1,-.6) -- (1,.2) 
	.. controls (1,.7) and (2,.7) .. (2,1.2);
\draw[color=white,line width=.7em] (2,-.1) -- (2,.2) 
	.. controls (2,.9) and (0,.9) .. (0,2);
\draw (2,-.1) -- (2,.2) 
	.. controls (2,.9) and (0,.9) .. (0,2);
\draw (0,-.2) -- (1,-.2);
\draw (1,.2) -- (2,.2);
\bra{1}{1.6};
\bra{2}{1.3};
\ket{1}{-.7};
\ket{2}{-.2};
\node at (-.25,-.2) {$U_H$};
\node at (2.25,.2) {$V_K$};
\node at (0,-1.15) {$H$};
\node at (1,-1.15) {$L$};
\node at (2,-.65) {$K$};
\end{tikzpicture}} 
\right]
\stackrel[\ref{lem:regularity-of-coreps-and-reps}]{\text{Lem.}}\rinceq 
\left[
\scalebox{\scalefactor}{
\begin{tikzpicture}[baseline=-\the\dimexpr\fontdimen22\textfont2\relax+.3cm, line width=.1em]
\draw (0,-.9) -- (0,.4)
	.. controls (0,1.2) and (1,1.2) .. (1,1.5);
\draw (1,-.6) -- (1,.2);
\draw[color=white,line width=.7em] (1,.8) .. controls (1,1.3) and (0,1.3) .. (0,1.9);
\draw (1,.8) .. controls (1,1.3) and (0,1.3) .. (0,1.9);
\draw (0,-.2) -- (1,-.2);
\bra{1}{1.6};
\ket{1}{.8};
\bra{1}{.3};
\ket{1}{-.7};
\node at (.5,-.5) {$U_H$};
\node at (0,-1.15) {$H$};
\node at (1,-1.15) {$L$};
\node at (.6,.7) {$K$};
\end{tikzpicture}} 
\right]
\nonumber\\
&\stackrel[\text{braiding}]{\text{(semi-)reg. of}}\rinceq 
\left[
\scalebox{\scalefactor}{
\begin{tikzpicture}[baseline=-\the\dimexpr\fontdimen22\textfont2\relax, line width=.1em]
\draw (0,-.9) -- (0,.2);
\draw (1,-.6) -- (1,.2);
\draw (0,.7) -- (0,1.3);
\draw (0,-.2) -- (1,-.2);
\ket{0}{.8};
\bra{0}{.3};
\bra{1}{.3};
\ket{1}{-.7};
\node at (-.25,-.2) {$U_H$};
\node at (0,-1.15) {$H$};
\node at (1,-1.15) {$L$};
\node at (-.25,1.15) {$K$};
\end{tikzpicture}} 
\right]
= \Kp(\H,\K) \ .
\end{align}
\endgroup
\end{proof}

\section{Semi-direct products of regular braided multiplicative unitaries}
\label{sec:semi-direct-products}

Let $\Cat$ be a concrete regularly braided $W^*$-category, 
$K$ an object in $\Cat$, let 
$W \in \U_{\Cat}(K \ot K)$ be a regular multiplicative unitary 
in $\Cat$. Moreover, let $L = (\chL, U, V)$ be an object in $\WYDW$ and 
$F \in \U_{\WYDW}(L \ot L)$ be a multiplicative unitary in 
$\WYDW$. 
Define the \emph{semi-direct product multiplicative unitary} 
$W \ltimes F$ by 
\begin{align}
W \ltimes F := 
\scalebox{\scalefactor}{
\begin{tikzpicture}[baseline=-\the\dimexpr\fontdimen22\textfont2\relax+1.05cm, line width=.1em]
\draw[color=gray] (0,0) -- (0,2.15);
\draw (.5,0) -- (.5,2.15);
\draw[color=gray] (1.5,0) -- (1.5,.25) 
	.. controls (1.5,.5) and (2.4,.5) .. (2.4,.75)
	.. controls (2.4,1) and (1.5,1) .. (1.5,1.25)
	-- (1.5,2.15);
\draw[color=white,line width=.7em] (2,0) -- (2,2.15);
\draw (.5,.75) -- (2,.75);
\draw[color=gray] (1.5,.25) -- (2,.25);
\draw[color=gray,dashed] (1.5,1.25) -- (2,1.25);
\draw[color=gray] (.5,1.5) -- (1.5,1.5);
\draw[color=white, line width=.7em] (.2,1.9) -- (1,1.9);
\draw[color=gray] (0,1.9) -- (1.5,1.9);
\draw (2,0) -- (2,2.15);
\node at (-.23,1.9) {$W$};
\node at (.3,1.5) {$U$};
\node at (2.28,1.25) {$V^*$};
\node at (.3,.75) {$F$};
\node at (2.2,.25) {$V$};
\node at (0,-.25) {$H$};
\node at (.5,-.25) {$\chL$};
\node at (1.5,-.25) {$H$};
\node at (2,-.25) {$\chL$};
\end{tikzpicture}} 
\in \U_{\Cat}(K \ot \chL \ot K \ot \chL) \ . 
\end{align}
That $W \ltimes F$ is indeed a multiplicative unitary 
in $\Cat$ is shown, for the special case 
$(\Cat,\ot,c) = (\Hilb,\ot,\Sigma)$, 
in \cite{Roy:Qgrp_with_proj}*{Thm.\,6.15}; 
details for the case considered here 
will be presented in a separate paper. 

Before investigating the regularity properties of 
semi-direct products, we introduce the notion of 
fixed vectors.

\begin{df}\label{df:fixed-vectors}
Let $W \in \U_{\Cat}(K \ot K)$ be a multiplicative unitary. 
A \emph{fixed vector} for $W$ is a morphism 
$e \in \Hom_\Cat(\one,K)$ such that 
$W(e \ot \xi) = e \ot \xi$ holds for all $\xi \in \K$. 
\end{df}

\begin{rmk}\label{rmk:fixed-vectors}
\begin{enumerate}
\item 
An ordinary multiplicative unitary $W$ is said to be 
of \emph{compact type}, if $A(W)$ is unital. 
It is shown in \cite{Baaj-Skandalis:Unitaires}*{Prop.\,1.10} for 
multiplicative unitaries $W$ on separable Hilbert spaces, that 
$W$ is of compact type if and only if it admits a non-zero fixed vector.
\item 
If $e$ is a fixed vector for $W$, then we also have 
\begin{align}
\scalebox{\scalefactor}{
\begin{tikzpicture}[baseline=-\the\dimexpr\fontdimen22\textfont2\relax, line width=.1em]
\draw (0,.6) -- (0,-.35);
\draw (1,.6) -- (1,-.6);
\draw[dashed] (0,.2) -- (1,.2);
\node[draw,fill=white,inner sep=.16em] at (0,-.35) {$e$};
\node[anchor=east] at (0,.2) {$W^*$};
\node[anchor=north] at (1,-.6) {$K$};
\node[anchor=south] at (0,.6) {$K$};
\node[anchor=south] at (1,.6) {$K$};
\end{tikzpicture}} 
= 
\scalebox{\scalefactor}{
\begin{tikzpicture}[baseline=-\the\dimexpr\fontdimen22\textfont2\relax, line width=.1em]
\draw (0,.6) -- (0,-.35);
\draw (1,.6) -- (1,-.6);
\node[draw,fill=white,inner sep=.16em] at (0,-.35) {$e$};
\node[anchor=north] at (1,-.6) {$K$};
\node[anchor=south] at (0,.6) {$K$};
\node[anchor=south] at (1,.6) {$K$};
\end{tikzpicture}} 
\qquad
\text{and}
\qquad
\scalebox{\scalefactor}{
\begin{tikzpicture}[baseline=-\the\dimexpr\fontdimen22\textfont2\relax, line width=.1em]
\draw (0,.5) -- (0,-.6);
\draw (1,.6) -- (1,-.6);
\draw (0,-.2) -- (1,-.2);
\node[draw,fill=white,inner sep=.1em] at (0,.35) {$e^*$};
\node[anchor=west] at (1,-.2) {$W$};
\node[anchor=north] at (0,-.6) {$K$};
\node[anchor=north] at (1,-.6) {$K$};
\node[anchor=south] at (1,.6) {$K$};
\end{tikzpicture}} 
= 
\scalebox{\scalefactor}{
\begin{tikzpicture}[baseline=-\the\dimexpr\fontdimen22\textfont2\relax, line width=.1em]
\draw (0,.5) -- (0,-.6);
\draw (1,.6) -- (1,-.6);
\node[draw,fill=white,inner sep=.1em] at (0,.35) {$e^*$};
\node[anchor=north] at (0,-.6) {$K$};
\node[anchor=north] at (1,-.6) {$K$};
\node[anchor=south] at (1,.6) {$K$};
\end{tikzpicture}} 
\ .
\end{align}
\item
If $e$ is a non-zero fixed vector for $W$, and $(H,V) \in {}_W\Cat$, 
then we have 
\begin{align}
\scalebox{\scalefactor}{
\begin{tikzpicture}[baseline=-\the\dimexpr\fontdimen22\textfont2\relax, line width=.1em]
\draw (0,.6) -- (0,-.35);
\draw (1,.6) -- (1,-.6);
\draw (0,.2) -- (1,.2);
\node[draw,fill=white,inner sep=.16em] at (0,-.35) {$e$};
\node[anchor=east] at (0,.2) {$V$};
\node[anchor=north] at (1,-.6) {$H$};
\node[anchor=south] at (0,.6) {$K$};
\node[anchor=south] at (1,.6) {$H$};
\end{tikzpicture}} 
= 
\frac 1{\|e\|^2} \cdot 
\scalebox{\scalefactor}{
\begin{tikzpicture}[baseline=-\the\dimexpr\fontdimen22\textfont2\relax, line width=.1em]
\draw (0,1) -- (0,-1);
\draw (1,1) -- (1,-1);
\draw (2,1) -- (2,-1);
\draw[color=white,line width=.7em] (.5,0) -- (1.5,0);
\draw (0,0) -- (2,0);
\node[draw,fill=white,inner sep=.15em] at (0,-.9) {$e$};
\node[draw,fill=white,inner sep=.15em] at (1,-.9) {$e$};
\node[draw,fill=white,inner sep=.1em] at (1,.9) {$e^*$};
\node[anchor=east] at (0,0) {$V$};
\node[anchor=north] at (2,-1) {$H$};
\end{tikzpicture}} 
= 
\frac 1{\|e\|^2} \cdot 
\scalebox{\scalefactor}{
\begin{tikzpicture}[baseline=-\the\dimexpr\fontdimen22\textfont2\relax, line width=.1em]
\draw (0,1) -- (0,-1);
\draw (1,1) -- (1,-1);
\draw (2,1) -- (2,-1);
\draw[dashed] (1,-.6) -- (2,-.6);
\draw (0,-.2) -- (1,-.2);
\draw (1,.2) -- (2,.2);
\draw[dashed] (0,.6) -- (1,.6);
\node[draw,fill=white,inner sep=.15em] at (0,-.9) {$e$};
\node[draw,fill=white,inner sep=.15em] at (1,-.9) {$e$};
\node[draw,fill=white,inner sep=.1em] at (1,.9) {$e^*$};
\node[anchor=east] at (0,.6) {$W^{*}$};
\node[anchor=east] at (0,-.2) {$W$};
\node[anchor=west] at (2,.2) {$V$};
\node[anchor=west] at (2,-.6) {$V^{*}$};
\node[anchor=north] at (2,-1) {$H$};
\end{tikzpicture}} 
= 
\scalebox{\scalefactor}{
\begin{tikzpicture}[baseline=-\the\dimexpr\fontdimen22\textfont2\relax, line width=.1em]
\draw (0,.6) -- (0,-.35);
\draw (1,.6) -- (1,-.6);
\node[draw,fill=white,inner sep=.16em] at (0,-.35) {$e$};
\node[anchor=north] at (1,-.6) {$H$};
\node[anchor=south] at (0,.6) {$K$};
\node[anchor=south] at (1,.6) {$H$};
\end{tikzpicture}} 
\ .
\end{align}
\end{enumerate}
\end{rmk}

\begin{prop}
Let $\Cat$ be a concrete $W^*$-category with regular braiding, 
let $W \in \U_\Cat(K \ot K)$ be a regular multiplicative unitary, 
and $F \in \U_{\WYDW}(L \ot L)$ 
be a multiplicative unitary. 
\begin{enumerate}
\item 
If $F$ is regular, then so is $W \ltimes F$. 
\item 
If $W$ admits a non-zero fixed vector and $W \ltimes F$ is regular, 
then $F$ is regular.
\end{enumerate}
\end{prop}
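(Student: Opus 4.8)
The plan is to prove both statements by unfolding the defining diagram of $W\ltimes F$ and reasoning with the regularity spaces $C(\cdot)$, in the graphical style of the proofs of Proposition~\ref{prop:F-slices-are-non-deg-algebras}, Lemma~\ref{lem:regularity-of-coreps-and-reps} and Proposition~\ref{prop:YD-cats-of-good-regular-MUs-have-regular-braiding}. Write $\check{\L}$ for the Hilbert space underlying $\chL$. Since $W\ltimes F$ is a multiplicative unitary on the object $K\ot\chL$ of $\Cat$, we have $C(W\ltimes F)=\bigl[(\id\ot\omega)\bigl(c_{K\ot\chL,\,K\ot\chL}^{-1}(W\ltimes F)\bigr)\bigm|\omega\in\B(\K\ot\check{\L})_*\bigr]\subseteq\B(\K\ot\check{\L})$. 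Recall also that, $W$ being regular and hence good, Proposition~\ref{prop:YD-cats-of-good-regular-MUs-have-regular-braiding} makes $\WYDW$ a regularly braided concrete monoidal $W^*$-category, so that ``$F$ is regular'' carries its usual meaning $C(F):=[(\id\ot\omega)(\Phi_{L,L}^{-1}F)\mid\omega\in\B(\check{\L})_*]=\Kp(\check{\L})$ in $\WYDW$, where $\Phi_{L,L}=c_{\chL,\chL}^{-1}\circ(V\ast U)$.

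\emph{Part 1.} I would substitute the defining diagram of $W\ltimes F$ into the formula for $C(W\ltimes F)$, expand the large braiding $c_{K\ot\chL,K\ot\chL}^{-1}$ into a cabling of the elementary braidings $c_{K,K}^{\pm1},c_{K,\chL}^{\pm1},c_{\chL,K}^{\pm1},c_{\chL,\chL}^{\pm1}$ by the hexagon identities, and slice over the two right legs ($K$ and $\chL$). The graphical computation then proceeds by peeling off pieces: first the $W$-strand together with the braiding strands joining the two $K$-legs, which by regularity of $W$ produces $\Kp(\K)$-valued slices on the first $K$-leg and simultaneously untwists the adjacent $U$- and $V$-strands, exactly as in the display~\eqref{eq:hat-A-is-an-algebra}; then the $U$- and $V$-strands, absorbed by parts~1 and~2 of Lemma~\ref{lem:regularity-of-coreps-and-reps} applied to the corepresentation $(\chL,U)$ and the representation $(\chL,V)$ of $W$, together with non-degeneracy of their slice algebras; finally the residual elementary braiding strands, by regularity of $c$. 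The structural point is that in the middle step the $U$-, $V$- and leftover $c_{\chL,\chL}$-strands recombine into precisely $\Phi_{L,L}^{-1}\circ\For(F)$ — here one uses $\Phi_{L,L}=c_{\chL,\chL}^{-1}\circ(V\ast U)$ and the defining property~\eqref{eq:in-lem---good--indep-of-2nd-leg} of $V\ast U$ in terms of $U$, $V$ and $W$ — so that the $\chL$-leg slice is $C(F)$. Regularity of $F$ then yields $C(W\ltimes F)=\Kp(\K)\ot\Kp(\check{\L})=\Kp(\K\ot\check{\L})$: the inclusions ``$\subseteq$'' appearing in the chain give ``$\subseteq$'', and reversing them gives equality, so $W\ltimes F$ is regular.

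\emph{Part 2.} Here I would fix a non-zero fixed vector $e\in\Hom_\Cat(\one,K)$ for $W$, rescale so that $\|e\|=1$, and let $j\colon\check{\L}\to\K\ot\check{\L}$, $\xi\mapsto e\ot\xi$, be the resulting isometry. Feeding $e$ into both $K$-slots of $W\ltimes F$ collapses the $W$-strand (because $W(e\ot\xi)=e\ot\xi$ for every $\xi$) and the two $V$-strands (Remark~\ref{rmk:fixed-vectors}(3)), and a short computation gives $(j^{*}\ot j^{*})\,(W\ltimes F)\,(j\ot j)=(T\ot\id_{\check{\L}})\circ\For(F)$, where $T:=(\id\ot\langle e|)\For(U)(\id\ot|e\rangle)\in\B(\check{\L})$ is the compression of the $U$-strand by the vector state of $e$ on its $W$-leg. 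Using $W^{*}(e\ot\xi)=e\ot\xi$ (Remark~\ref{rmk:fixed-vectors}(2)) together with the corepresentation identity for $(\chL,U)$ one checks $(T\ot\id_{K})\For(U)=T\ot\id_{K}$, which identifies $T$ as the orthogonal projection onto $\mathcal{M}:=\{\xi\in\check{\L}\mid\For(U)(\xi\ot\eta)=\xi\ot\eta\text{ for all }\eta\in\K\}$. Running the computation of Part~1 — now without assuming regularity of $F$ — expresses the $e$-compression of $C(W\ltimes F)$ along the first $K$-factor in terms of $C(F)$ and $T$; feeding in the hypothesis $C(W\ltimes F)=\Kp(\K\ot\check{\L})$, so that this compression equals $\Kp(\check{\L})$, then forces $T=\id_{\check{\L}}$ and $C(F)=\Kp(\check{\L})$, i.e.\ $F$ is regular.

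The main obstacle in Part~1 is the book-keeping in the middle step: verifying that the Yetter--Drinfeld data $U$, $V$ of $L$ genuinely reassemble, together with the braiding of $\Cat$, into the Yetter--Drinfeld braiding $\Phi$ of $\WYDW$ — this is where~\eqref{eq:in-lem---good--indep-of-2nd-leg} carries the weight, and it is also why one needs $\WYDW$ to be regularly braided rather than merely braided. In Part~2 the delicate point is to show that compressing $C(W\ltimes F)$ by the fixed vector recovers $C(F)$ on the nose rather than only a corner of it; the relation $(T\ot\id)\For(U)=T\ot\id$ is the key tool, but turning it — via regularity of $W\ltimes F$ — into the statement $T=\id_{\check{\L}}$ is the step I would budget the most care for, and the reason a generic unit vector in $\K$ will not do in place of a fixed vector.
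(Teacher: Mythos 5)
Your Part 1 is essentially the paper's argument: the paper performs exactly the peeling you describe, writing $Z := (V \ast U)^*$ so that the $U$-, $V$- and braiding strands reassemble into the Yetter--Drinfeld braiding, and then successively invoking \eqref{eq:remove-unitaries}, regularity of the corepresentation $U$ (Lemma \ref{lem:regularity-of-coreps-and-reps}), regularity of $F$ in $\WYDW$, regularity of the braiding $c$, and regularity of $W$, ending at $\Kp(\K \ot \L)$. All steps are equalities, so no separate reversal of inclusions is needed.

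Part 2, however, contains a genuine gap. The operator $T = (\id \ot \omega_{e,e})(U)$ you introduce is, as you yourself note, the orthogonal projection onto the $U$-invariant vectors of $\L$; it is \emph{not} the identity in general (for a nontrivial corepresentation it can even be $0$), and regularity of $W \ltimes F$ cannot force $T = \id_{\L}$ --- Part 1 already shows that $W \ltimes F$ is regular for any regular $F$ with arbitrary nontrivial Yetter--Drinfeld data $(U,V)$, so a deduction ending in ``$U$ is trivial'' would be absurd. The source of the confusion is that $T$ only arises if you feed the fixed vector into \emph{both} $K$-legs of the operator $W \ltimes F$, whereas the object you actually need to compute is $(\omega_{e,e} \ot \id)\bigl(C(W \ltimes F)\bigr)$: there the legs $3$ and $4$ are sliced by \emph{arbitrary} normal functionals as part of the definition of $C(W\ltimes F)$, and only leg $1$ is compressed by $e$. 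Since the $U$-strand joins legs $2$ and $3$ and never touches leg $1$, no compression of $U$ by $e$ ever occurs. The paper's route is: first run the Part 1 chain only up to the point where the $U$-strand has been absorbed --- this uses \eqref{eq:remove-unitaries} and Lemma \ref{lem:regularity-of-coreps-and-reps} for $U$, hence only regularity of $W$ and of the braiding, not of $F$ --- and only then use the fixed vector, which kills precisely the $W$-strand (Definition \ref{df:fixed-vectors}) and the $V^*$-strand (Remark \ref{rmk:fixed-vectors}). What remains is exactly the diagram for $C(F)$, so $\Kp(\L) = (\omega_{e,e}\ot\id)(\Kp(\K\ot\L)) = C(F)$ directly, with no auxiliary operator to dispose of. Your instinct that the fixed vector is what makes the first leg collapse is right, but the $T = \id_{\L}$ step you budgeted the most care for is not merely delicate --- it is false, and the argument must be reorganised as above to avoid it.
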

\begin{proof}
\emph{Part 1:} 
Let $L = (\chL, U, V) \in \WYDW$ 
and write $Z := (V \ast U)^*$, where the product 
$\ast$ is as in Section \ref{ssec:br-mult-unitaries}. 
We have 
\begingroup\allowdisplaybreaks
\begin{align}\label{eq:prop-regularity-of-semidirect-product--1}
&\left[
\scalebox{\scalefactor}{
\begin{tikzpicture}[baseline=-\the\dimexpr\fontdimen22\textfont2\relax+1.3cm, line width=.1em]
\draw[color=gray] (0,-.25) -- (0,2.15)				
	.. controls (0,2.6) and (1.5,2.6) .. (1.5,3);
\draw (.5,-.25) -- (.5,2.1)							
	.. controls (.5,2.5) and (2,2.5) .. (2,2.9);
\draw[color=white, line width=.7em] (1.5,2)		
	.. controls (1.5,2.4) and (0,2.4) .. (0,3);
\draw[color=gray] (1.5,0) -- (1.5,.25) 				
	.. controls (1.5,.5) and (2.4,.5) .. (2.4,.75)
	.. controls (2.4,1) and (1.5,1) .. (1.5,1.25)
	-- (1.5,2)
	.. controls (1.5,2.4) and (0,2.4) .. (0,3)
	-- (0,3.25);
\draw[color=white,line width=.7em] (2,0) -- (2,2.15)		
	.. controls (2,2.65) and (.5,2.65) .. (.5,3.1)
	-- (.5,3.25);
\draw (.5,.75) -- (2,.75);							
\draw[color=gray] (1.5,.25) -- (2,.25);
\draw[color=gray,dashed] (1.5,1.25) -- (2,1.25);
\draw[color=gray] (.5,1.5) -- (1.5,1.5);			
\draw[color=white, line width=.7em] (.2,1.9) -- (1,1.9);
\draw[color=gray] (0,1.9) -- (1.5,1.9);
\draw (2,0) -- (2,2.15)
	.. controls (2,2.65) and (.5,2.65) .. (.5,3.1)
	-- (.5,3.25);
\bra{1.5}{3.1};
\bra{2}{3}
\ket{1.5}{-.1};
\ket{2}{-.1}
\node at (-.23,1.9) {$W$};
\node at (.3,1.5) {$U$};
\node at (2.28,1.25) {$V^*$};
\node at (.3,.75) {$F$};
\node at (2.2,.25) {$V$};
\node at (0,-.5) {$H$};
\node at (.5,-.5) {$\chL$};
\node at (1.5,-.5) {$H$};
\node at (2,-.5) {$\chL$};
\end{tikzpicture}} 
\right]
\stackrel{\eqref{eq:remove-unitaries}}= 
\left[
\scalebox{\scalefactor}{
\begin{tikzpicture}[baseline=-\the\dimexpr\fontdimen22\textfont2\relax+1.3cm, line width=.1em]
\draw[color=gray] (0,-.25) -- (0,2.15)				
	.. controls (0,2.6) and (1.5,2.6) .. (1.5,3);
\draw (.5,-.25) -- (.5,2.1)							
	.. controls (.5,2.5) and (2,2.5) .. (2,2.9);
\draw[color=white, line width=.7em] (1.5,2)		
	.. controls (1.5,2.4) and (0,2.4) .. (0,3);
\draw[color=gray] (2.5,.5) -- (2.5,.75) 				
	.. controls (2.5,1) and (1.5,1) .. (1.5,1.25)
	-- (1.5,2)
	.. controls (1.5,2.4) and (0,2.4) .. (0,3)
	-- (0,3.25);
\draw[color=white,line width=.7em] (2,0) -- (2,2.15)		
	.. controls (2,2.65) and (.5,2.65) .. (.5,3.1)
	-- (.5,3.25);
\draw (.5,.9) -- (2,.9);								
\draw (.5,.5) -- (2,.5);								
\draw[color=gray,dashed] (1.5,1.5) -- (2,1.5);		
\draw[color=gray] (.5,1.25) -- (1.5,1.25);			
\draw[color=white, line width=.7em] (.2,1.9) -- (1,1.9);
\draw[color=gray] (0,1.9) -- (1.5,1.9);
\draw (2,0) -- (2,2.15)
	.. controls (2,2.65) and (.5,2.65) .. (.5,3.1)
	-- (.5,3.25);
\bra{1.5}{3.1};
\bra{2}{3}
\ket{2.5}{.5};
\ket{2}{-.1}
\node at (-.23,1.9) {$W$};
\node at (.3,1.25) {$U$};
\node at (2.28,1.5) {$V^*$};
\node at (.3,.9) {$Z^*$};
\node at (.3,.5) {$F$};
\node at (0,-.5) {$H$};
\node at (.5,-.5) {$\chL$};
\node at (2,-.5) {$\chL$};
\node at (2.5,.1) {$H$};
\end{tikzpicture}} 
\right]
= 
\left[
\scalebox{\scalefactor}{
\begin{tikzpicture}[baseline=-\the\dimexpr\fontdimen22\textfont2\relax+1.3cm, line width=.1em]
\draw[color=gray] (0,-.25) -- (0,1.25)				
	.. controls (0,1.75) and (-.5,2) .. (-.5,2.5)
	.. controls (-.5,2.75) and (1,2.75) .. (1,3);
\draw (.5,-.25) -- (.5,.65)							
	.. controls (.5,1) and (1.75,1) .. (1.8,1.4)
	-- (1.8,1.5)
	.. controls (1.9,1.75) and (2.5,1.75) .. (2.5,2);
\draw[color=white, line width=.7em] (2.5,1) -- (2.5,1.45) 		
	.. controls (2.5,1.85) and (0,1.75) .. (0,2.25)
	-- (0,3.25);
\draw[color=gray] (2.5,1) -- (2.5,1.45) 				
	.. controls (2.5,1.85) and (0,1.75) .. (0,2.25)
	-- (0,3.25);
\draw[color=white,line width=.7em] (2,0) -- (2,.7)	
	.. controls (2,1.3) and (.5,1.3) .. (.5,1.8)
	-- (.4,3.25);
\draw (.5,.65) -- (2,.65);								
\draw (.5,.25) -- (2,.25);								
\draw[color=gray,dashed] (0,2.25) -- (.5,2.25);				
\draw[color=gray] (1.8,1.45) -- (2.5,1.45);				
\draw[color=gray] (-.5,2.5) -- (0,2.5);					
\draw (2,0) -- (2,.7)
	.. controls (2,1.3) and (.5,1.3) .. (.5,1.8)
	-- (.5,3.25);
\bra{1}{3.1};
\bra{2.5}{2.1}
\ket{2.5}{1.1};
\ket{2}{-.1}
\node at (-.6,2.7) {$W$};
\node at (2.75,1.45) {$U$};
\node at (.8,2.25) {$V^*$};
\node at (.3,.65) {$Z^*$};
\node at (.3,.25) {$F$};
\node at (0,-.5) {$H$};
\node at (.5,-.5) {$\chL$};
\node at (2,-.5) {$\chL$};
\node at (2.5,.7) {$H$};
\end{tikzpicture}} 
\right]
\nonumber\\
&\stackrel[\text{of }U]{\text{reg.}}= 
\left[
\scalebox{\scalefactor}{
\begin{tikzpicture}[baseline=-\the\dimexpr\fontdimen22\textfont2\relax+1.3cm, line width=.1em]
\draw[color=gray] (0,-.25) -- (0,1.25)				
	.. controls (0,1.75) and (-.5,2) .. (-.5,2.5)
	.. controls (-.5,2.75) and (1,2.75) .. (1,3);
\draw (.5,-.25) -- (.5,.65)							
	.. controls (.5,1) and (1.75,1) .. (1.8,1.4)
	-- (1.8,1.5);
\draw[color=white, line width=.7em] (1,1.75) 			
	.. controls (1,2) and (0,2) .. (0,2.25)
	-- (0,3.25);
\draw[color=gray] (1,1.75) 							
	.. controls (1,2) and (0,2) .. (0,2.25)
	-- (0,3.25);
\draw[color=white,line width=.7em] (2,0) -- (2,.65)	
	.. controls (2,1.225) and (.5,1.225) .. (.5,1.65)
	-- (.5,3.25);
\draw (.5,.65) -- (2,.65);								
\draw (.5,.25) -- (2,.25);								
\draw[color=gray,dashed] (0,2.25) -- (.5,2.25);			
\draw[color=gray] (-.5,2.5) -- (0,2.5);					
\draw (2,0) -- (2,.65)
	.. controls (2,1.225) and (.5,1.225) .. (.5,1.65)
	-- (.5,3.25);
\bra{1}{3.1};
\bra{1.8}{1.6}
\ket{1}{1.7};
\ket{2}{-.1}
\node at (-.35,2.85) {$W$};
\node at (.8,2.25) {$V^*$};
\node at (.3,.65) {$Z^*$};
\node at (.3,.25) {$F$};
\node at (0,-.5) {$H$};
\node at (.5,-.5) {$\chL$};
\node at (2,-.5) {$\chL$};
\end{tikzpicture}} 
\right]
\stackrel[\text{of }F]{\text{reg.}}= 
\left[
\scalebox{\scalefactor}{
\begin{tikzpicture}[baseline=-\the\dimexpr\fontdimen22\textfont2\relax+1.5cm, line width=.1em]
\draw[color=gray] (0,.25) -- (0,1.25)				
	.. controls (0,1.75) and (-.5,2) .. (-.5,2.5)
	.. controls (-.5,2.75) and (1,2.75) .. (1,3);
\draw (.5,.25) -- (.5,.75);
\draw[color=white, line width=.7em] (1,1.75) 			
	.. controls (1,2) and (0,2) .. (0,2.25)
	-- (0,3.25);
\draw[color=gray] (1,1.75) 							
	.. controls (1,2) and (0,2) .. (0,2.25)
	-- (0,3.25);
\draw[color=white,line width=.7em] (.5,1.25) -- (.5,3.25);	
\draw[color=gray,dashed] (0,2.25) -- (.5,2.25);			
\draw[color=gray] (-.5,2.5) -- (0,2.5);					
\draw (.5,1.25) -- (.5,3.25);
\bra{1}{3.1};
\ket{1}{1.7};
\ket{.5}{1.25};
\bra{.5}{.75};
\node at (-.35,2.85) {$W$};
\node at (.8,2.25) {$V^*$};
\node at (0,0) {$H$};
\node at (.5,0) {$\chL$};
\end{tikzpicture}} 
\right]
\stackrel{\eqref{eq:remove-unitaries}}= 
\left[
\scalebox{\scalefactor}{
\begin{tikzpicture}[baseline=-\the\dimexpr\fontdimen22\textfont2\relax+2cm, line width=.1em]
\draw[color=gray] (0,1.25) -- (0,1.25)				
	.. controls (0,1.75) and (-.5,2) .. (-.5,2.5)
	.. controls (-.5,2.75) and (1,2.75) .. (1,3);
\draw (.5,1.25) -- (.5,1.75);
\draw[color=white, line width=.7em] (0,2.25) -- (0,3.25);		
\draw[color=gray] (0,2.25) -- (0,3.25);					
\draw[color=white,line width=.7em] (.5,2.25) -- (.5,3.25);	
\draw[color=gray] (-.5,2.5) -- (0,2.5);					
\draw (.5,2.25) -- (.5,3.25);
\bra{1}{3.1};
\ket{0}{2.15};
\ket{.5}{2.25};
\bra{.5}{1.75};
\node at (-.35,2.85) {$W$};
\node at (0,1) {$H$};
\node at (.5,1) {$\chL$};
\end{tikzpicture}} 
\right]
\stackrel[\text{braiding}]{\text{reg. of}}= 
\left[
\scalebox{\scalefactor}{
\begin{tikzpicture}[baseline=-\the\dimexpr\fontdimen22\textfont2\relax+2cm, line width=.1em]
\draw[color=gray] (0,1.25) -- (0,1.25)				
	.. controls (0,1.75) and (-.5,2) .. (-.5,2.5)
	.. controls (-.5,2.75) and (.5,2.75) .. (.5,2.9);
\draw (1,1.25) -- (1,1.75);
\draw[color=white, line width=.7em] (0,2.25) -- (0,3.25);		
\draw[color=gray] (0,2.25) -- (0,3.25);				
\draw (1,2.25) -- (1,3.25);							
\draw[color=gray] (-.5,2.5) -- (0,2.5);					
\bra{.5}{3};
\ket{0}{2.15};
\ket{1}{2.25};
\bra{1}{1.75};
\node at (-.35,2.85) {$W$};
\node at (0,1) {$H$};
\node at (1,1) {$\chL$};
\end{tikzpicture}} 
\right]
\nonumber\\
&\stackrel[\text{of }W]{\text{reg.}}= 
\left[
\scalebox{\scalefactor}{
\begin{tikzpicture}[baseline=-\the\dimexpr\fontdimen22\textfont2\relax+.5cm, line width=.1em]
\draw[color=gray] (0,0) -- (0,.5);
\draw[color=gray] (0,1) -- (0,1.5);
\draw (.5,0) -- (.5,.5);
\draw (.5,1) -- (.5,1.5);
\ket{0}{1};
\bra{0}{.5};
\ket{.5}{1};
\bra{.5}{.5};
\node at (0,-.25) {$H$};
\node at (.5,-.25) {$\chL$};
\end{tikzpicture}} 
\right]
= \Kp(\H \ot \L) \ ,
\end{align}
\endgroup
which shows that $W \ltimes F$ is regular.

\medskip

\emph{Part 2:} 
Let $e$ be a fixed unit vector for $W$, and 
$\omega_{e,e}$ be the corresponding state 
$x \mapsto \langle e, xe \rangle$ on $\B(\K)$. 
Then we have, by regularity of $W \ltimes F$ and $W$, 
\begin{align}
&\Kp(\L) = (\omega_{e,e} \ot \id)(\Kp(\K \ot \L)) 
\nonumber\\
&= 
\left[
\scalebox{\scalefactor}{
\begin{tikzpicture}[baseline=-\the\dimexpr\fontdimen22\textfont2\relax+1.3cm, line width=.1em]
\draw[color=gray] (0,0) -- (0,2.15)				
	.. controls (0,2.6) and (1.5,2.6) .. (1.5,3);
\draw (.5,-.25) -- (.5,2.1)							
	.. controls (.5,2.5) and (2,2.5) .. (2,2.9);
\draw[color=white, line width=.7em] (1.5,2)		
	.. controls (1.5,2.4) and (0,2.4) .. (0,3);
\draw[color=gray] (1.5,0) -- (1.5,.25) 				
	.. controls (1.5,.5) and (2.4,.5) .. (2.4,.75)
	.. controls (2.4,1) and (1.5,1) .. (1.5,1.25)
	-- (1.5,2)
	.. controls (1.5,2.4) and (0,2.4) .. (0,3);
\draw[color=white,line width=.7em] (2,0) -- (2,2.15)		
	.. controls (2,2.65) and (.5,2.65) .. (.5,3.1)
	-- (.5,3.25);
\draw (.5,.75) -- (2,.75);							
\draw[color=gray] (1.5,.25) -- (2,.25);
\draw[color=gray,dashed] (1.5,1.25) -- (2,1.25);
\draw[color=gray] (.5,1.5) -- (1.5,1.5);			
\draw[color=white, line width=.7em] (.2,1.9) -- (1,1.9);
\draw[color=gray] (0,1.9) -- (1.5,1.9);
\draw (2,0) -- (2,2.15)
	.. controls (2,2.65) and (.5,2.65) .. (.5,3.1)
	-- (.5,3.25);
\bra{1.5}{3.1};
\bra{2}{3}
\ket{1.5}{-.1};
\ket{2}{-.1}
\node[draw,fill=white,inner sep=.15em] at (0,0) {$e$};
\node[draw,fill=white,inner sep=.1em] at (0,3) {$e^*$};
\node at (-.23,1.9) {$W$};
\node at (.3,1.5) {$U$};
\node at (2.28,1.25) {$V^*$};
\node at (.3,.75) {$F$};
\node at (2.2,.25) {$V$};
\node at (.5,-.5) {$\chL$};
\node at (1.5,-.5) {$H$};
\node at (2,-.5) {$\chL$};
\end{tikzpicture}} 
\right]
\stackrel{\eqref{eq:prop-regularity-of-semidirect-product--1}}= 
\left[
\scalebox{\scalefactor}{
\begin{tikzpicture}[baseline=-\the\dimexpr\fontdimen22\textfont2\relax+1.3cm, line width=.1em]
\draw[color=gray] (0,1.25)						
	.. controls (0,1.75) and (-.5,2) .. (-.5,2.5)
	.. controls (-.5,2.75) and (1,2.75) .. (1,3);
\draw (.5,-.25) -- (.5,.65)							
	.. controls (.5,1) and (1.75,1) .. (1.8,1.4)
	-- (1.8,1.5);
\draw[color=white, line width=.7em] (1,1.75) 			
	.. controls (1,2) and (0,2) .. (0,2.25)
	-- (0,3.25);
\draw[color=gray] (1,1.75) 							
	.. controls (1,2) and (0,2) .. (0,2.25)
	-- (0,3.25);
\draw[color=white,line width=.7em] (2,0) -- (2,.65)	
	.. controls (2,1.225) and (.5,1.225) .. (.5,1.65)
	-- (.5,3.25);
\draw (.5,.65) -- (2,.65);								
\draw (.5,.25) -- (2,.25);								
\draw[color=gray,dashed] (0,2.25) -- (.5,2.25);			
\draw[color=gray] (-.5,2.5) -- (0,2.5);					
\draw (2,0) -- (2,.65)
	.. controls (2,1.225) and (.5,1.225) .. (.5,1.65)
	-- (.5,3.25);
\bra{1}{3.1};
\bra{1.8}{1.6}
\ket{1}{1.7};
\ket{2}{-.1}
\node[draw,fill=white,inner sep=.15em] at (0,1.25) {$e$};
\node[draw,fill=white,inner sep=.1em] at (0,3.1) {$e^*$};
\node at (-.5,2.8) {$W$};
\node at (.8,2.25) {$V^*$};
\node at (.3,.65) {$Z^*$};
\node at (.3,.25) {$F$};
\node at (.5,-.5) {$\chL$};
\node at (2,-.5) {$\chL$};
\end{tikzpicture}} 
\right]
\stackrel[\ref{df:fixed-vectors}]{\text{Def.}}= 
\left[
\scalebox{\scalefactor}{
\begin{tikzpicture}[baseline=-\the\dimexpr\fontdimen22\textfont2\relax+1.3cm, line width=.1em]
\draw[color=gray] (-.5,2.25) -- (-.5,2.5)				
	.. controls (-.5,2.75) and (1,2.75) .. (1,3);
\draw (.5,-.25) -- (.5,.65)							
	.. controls (.5,1) and (1.75,1) .. (1.8,1.4)
	-- (1.8,1.5);
\draw[color=white, line width=.7em] (1,1.75) 			
	.. controls (1,2) and (0,2) .. (0,2.25)
	-- (0,3.25);
\draw[color=gray] (1,1.75) 							
	.. controls (1,2) and (0,2) .. (0,2.25)
	-- (0,3.25);
\draw[color=white,line width=.7em] (2,0) -- (2,.65)	
	.. controls (2,1.225) and (.5,1.225) .. (.5,1.65)
	-- (.5,3.25);
\draw (.5,.65) -- (2,.65);								
\draw (.5,.25) -- (2,.25);								
\draw[color=gray,dashed] (0,2.25) -- (.5,2.25);			
\draw (2,0) -- (2,.65)
	.. controls (2,1.225) and (.5,1.225) .. (.5,1.65)
	-- (.5,3.25);
\bra{1}{3.1};
\bra{1.8}{1.6}
\ket{1}{1.7};
\ket{2}{-.1}
\node[draw,fill=white,inner sep=.15em] at (-.5,2.25) {$e$};
\node[draw,fill=white,inner sep=.1em] at (0,3.1) {$e^*$};
\node at (.8,2.25) {$V^*$};
\node at (.3,.65) {$Z^*$};
\node at (.3,.25) {$F$};
\node at (.5,-.5) {$\chL$};
\node at (2,-.5) {$\chL$};
\end{tikzpicture}} 
\right]
\stackrel[\ref{rmk:fixed-vectors}]{\text{Rmk.}}= 
\left[
\scalebox{\scalefactor}{
\begin{tikzpicture}[baseline=-\the\dimexpr\fontdimen22\textfont2\relax+1.3cm, line width=.1em]
\draw (.5,-.25) -- (.5,.65)							
	.. controls (.5,1) and (1.75,1) .. (1.8,1.4)
	-- (1.8,1.5);
\draw[color=white, line width=.7em] (1,1.75) 			
	.. controls (1,2) and (0,2) .. (0,2.25)
	-- (0,3.25);
\draw[color=gray] (1,1.75) 							
	.. controls (1,2) and (0,2) .. (0,2.25)
	-- (0,3.25);
\draw[color=white,line width=.7em] (2,0) -- (2,.65)	
	.. controls (2,1.225) and (.5,1.225) .. (.5,1.65)
	-- (.5,3.25);
\draw (.5,.65) -- (2,.65);								
\draw (.5,.25) -- (2,.25);								
\draw (2,0) -- (2,.65)
	.. controls (2,1.225) and (.5,1.225) .. (.5,1.65)
	-- (.5,3.25);
\bra{1.8}{1.6}
\ket{1}{1.7};
\ket{2}{-.1}
\node[draw,fill=white,inner sep=.1em] at (0,3.1) {$e^*$};
\node at (.3,.65) {$Z^*$};
\node at (.3,.25) {$F$};
\node at (.5,-.5) {$\chL$};
\node at (2,-.5) {$\chL$};
\end{tikzpicture}} 
\right]
\nonumber\\
&= C(F) \ .
\end{align}
\end{proof}
\section{Bi-regularity}
\label{sec:biregularity}

To motivate this section, let us consider a 
multiplicative unitary $W \in \U(\H \ot \H)$ in the category 
$(\Hilb,\ot,\C,\Sigma)$ of Hilbert spaces equipped 
with the symmetric braiding given by the tensor flip $\Sigma$.
If $W$ is \emph{manageable}, see e.g. 
\cite{Woronowicz:Mult_unit_to_Qgrp}*{Def.\,1.2} for a definition, 
then the algebras $\hat A(W)$ and $\hat A(\hat W)$ 
are $C^*$-algebras and come equipped with anti-automorphisms 
$\hat R$ and $R$ satisfying 
$(\hat R \ot R)(W) = W$; see \cite{Soltan-Woronowicz:Multiplicative_unitaries}*{Lem.\,40}.
If $W$ is \emph{manageable} and \emph{regular}, then 
we may apply $\hat R \ot R$ to 
\begin{align}
[(\hatA(W) \ot \id)W(\id \ot \hatA(\hat W))] 
	= \hatA(W) \ot_\mathrm{min} \hatA(\hat W) 
\end{align}
from Theorem \ref{thm:F-as-multiplier}.2 
to obtain 
\begin{align}\label{eq:opp-regularity}
[(\id \ot \hatA(\hat W))W(\hatA(W) \ot \id)] 
	= \hatA(W) \ot_\mathrm{min} \hatA(\hat W) \ .
\end{align}
As both $\hatA(W)$ and $\hatA(\hat W)$ act non-degenerately 
on $\H$, hence, on $\Kp(\H)$, multiplying 
\eqref{eq:opp-regularity} with 
$\id \ot \Kp(\H)$ from the left, and with 
$\Kp(\H) \ot \id$ from the right, 
we obtain
\begin{align}
[(\id \ot \Kp(\H))W(\Kp(\H) \ot \id)] 
	= \Kp(\H \ot \H) \ .
\end{align}
From this we see that $W$ also satisfies the following 
regularity property:
\begin{align}
\Kp(\H) = [(\omega \ot \id)(\Sigma W) \,|\, \omega \in \B(\H)_*] \ .
\end{align}
A regular multiplicative unitary with this 
additional property is called \emph{bi-regular} 
in \cite{Baaj-Skandalis:Unitaires}*{Def.\,3.10}.
We have arrived at

\begin{prop}
Let $W$ be a multiplicative unitary in $(\Hilb,\ot,\C,\Sigma)$. 
If $W$ is regular and manageable, then it 
is bi-regular.
\end{prop}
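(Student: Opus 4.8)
The plan is to turn the heuristic derivation that precedes the statement into a rigorous proof. Concretely, I would proceed in four steps: (i) specialise Theorem~\ref{thm:F-as-multiplier}.2 to the non-braided situation; (ii) use manageability to obtain the anti-automorphisms $\hat R, R$ and apply $\hat R \ot R$ in order to interchange the roles of the two legs; (iii) absorb copies of $\Kp(\H)$ on either side using non-degeneracy of the leg algebras; and (iv) slice the resulting identity between compact operators to read off the extra slice condition which, together with regularity, is bi-regularity.

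For step (i), I would first record that on $\hatA(\Hilb)$ (with $c = \Sigma$) the tensor products $\hbt$ and $\habt$ both reduce to $\ot_\mathrm{min}$ --- immediately from \eqref{eq:hbt-on-algebras}, or via Lemma~\ref{lem:on-braided-tensor-products} --- and that, since $W$ is regular, so is $\hat W$ (Remark~\ref{rmk:regular-MU}). Hence Theorem~\ref{thm:F-as-multiplier}.2 reads
\[
  [(\hatA(W) \ot \id_\H)\,W\,(\id_\H \ot \hatA(\hat W))] = \hatA(W) \ot_\mathrm{min} \hatA(\hat W) \ .
\]
For step (ii), manageability of $W$ provides, by Woronowicz's theory together with \cite{Soltan-Woronowicz:Multiplicative_unitaries}*{Lem.\,40}, $^*$-anti-automorphisms $\hat R$ of $\hatA(W)$ and $R$ of $\hatA(\hat W)$ with $(\hat R \ot R)(W) = W$, where $\hat R \ot R$ is the induced $^*$-anti-automorphism of $\Malg(\hatA(W) \ot_\mathrm{min} \hatA(\hat W))$. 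Applying it to the displayed identity --- it is isometric and bijective, reverses products, restricts to bijections of $\hatA(W)$ and $\hatA(\hat W)$, and fixes $W$ --- yields
\[
  [(\id_\H \ot \hatA(\hat W))\,W\,(\hatA(W) \ot \id_\H)] = \hatA(W) \ot_\mathrm{min} \hatA(\hat W) \ .
\]

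For step (iii), I would use that both $\hatA(W)$ and $\hatA(\hat W)$ act non-degenerately on $\H$ (Proposition~\ref{prop:F-slices-are-non-deg-algebras}.2, the latter applied to $\hat W$ regarded as a multiplicative unitary in $\Hilb^\mathrm{rev}$), so that $[\hatA(W)\Kp(\H)] = \Kp(\H) = [\Kp(\H)\hatA(\hat W)]$. Multiplying the last identity on the left by $\id_\H \ot \Kp(\H)$ and on the right by $\Kp(\H) \ot \id_\H$ therefore collapses it to
\[
  [(\id_\H \ot \Kp(\H))\,W\,(\Kp(\H) \ot \id_\H)] = \Kp(\H) \ot_\mathrm{min} \Kp(\H) = \Kp(\H \ot \H) \ .
\]
Finally, for step (iv), a direct compression/slicing argument --- the same one underlying the equivalence of the two formulations of the regularity condition in \cite{Baaj-Skandalis:Unitaires}, and carried out implicitly in the paragraph preceding the statement --- rewrites this as $\Kp(\H) = [(\omega \ot \id)(\Sigma W) \mid \omega \in \B(\H)_*]$. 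Since $W$ is regular by hypothesis, this is precisely the statement that $W$ is bi-regular in the sense of \cite{Baaj-Skandalis:Unitaires}*{Def.\,3.10}.

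The only genuinely delicate point is step (ii): one must check that $\hat R \ot R$ is a well-defined $^*$-anti-automorphism of $\Malg(\hatA(W) \ot_\mathrm{min} \hatA(\hat W))$ fixing $W$, and that "applying" it to a closed-linear-span identity of operators is legitimate. Both are standard --- an anti-automorphism is isometric and bijective, hence carries the closed linear span of the products $a\,W\,b$ onto the closed linear span of the reversed products, and it restricts to a bijection of the $C^*$-subalgebra $\hatA(W) \ot_\mathrm{min} \hatA(\hat W)$ --- but they should be spelled out. Everything else is routine bookkeeping with closed linear spans of operators.
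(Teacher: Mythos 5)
Your proposal is correct and follows essentially the same route as the paper: the paper's argument is precisely the four-step chain you describe (specialise Theorem~\ref{thm:F-as-multiplier}.2 to $(\Hilb,\ot,\Sigma)$, apply the anti-automorphism $\hat R \ot R$ furnished by manageability via \cite{Soltan-Woronowicz:Multiplicative_unitaries}*{Lem.\,40}, absorb $\Kp(\H)$ on both sides using non-degeneracy of the leg algebras, and slice to obtain $\Kp(\H) = [(\omega \ot \id)(\Sigma W)\,|\,\omega \in \B(\H)_*]$). The additional care you take in step (ii) about $\hat R \ot R$ being a well-defined anti-automorphism of the multiplier algebra fixing $W$ is a reasonable elaboration of what the paper leaves implicit, not a deviation.
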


Since manageable multiplicative unitaries form an 
important class, we are now going to generalise the 
notion of bi-regularity to braided multiplicative 
unitaries.


\begin{df}\label{df:bi-semi-regularity}
Let $(\Cat,\ot,\one,c)$ be a unitarily braided 
concrete monoidal $W^*$-category. 
\begin{enumerate}
\item 
The braiding $c$ on $\Cat$ is said to be 
\emph{bi-\textnormal{(}semi-\textnormal{)}regular}, if it is (semi-)regular on $(\Cat,\ot,\one)$ 
and on $(\Cat,\ot^\mathrm{op},\one)$.
\item 
\sloppy
A multiplicative unitary $F \in \U_{\Cat}(L \ot L)$ 
is called \emph{bi-\textnormal{(}semi-\textnormal{)}regular}, if $F$ is \mbox{(semi-)}regular 
in $(\Cat,\ot,\one,c)$ and $F^*$ is (semi-)regular in 
$(\Cat,\ot^\mathrm{op},\one,c^{-1})$.
\end{enumerate}
\end{df}

Let us first comment on part 1 of the above definition.
Spelled out explicitly, a braiding $c$ on $\Cat$ is bi-regular, if 
\begin{align}
[(\id \ot \omega)(c_{H,K})\,|\, \omega \in \B(\K,\H)_*]
= \Kp(\H,\K) 
= [(\omega \ot \id)(c_{H,K})\,|\, \omega \in \B(\H,\K)_*]
\end{align}
holds for all $H,K \in \Cat$. 
Braidings on representation categories of compact 
quantum groups are bi-regular automatically. 
Before proving this, we introduce 
\emph{$^*$strongly rigid} categories.
Recall that the \emph{$^*$strong topology} on $\Hom(X,Y)$ is 
the locally convex topology generated by the semi-norms 
$a \mapsto \sqrt{\varphi(a^*a)}$ and 
$a \mapsto \sqrt{\varphi(aa^*)}$, where $\varphi$ runs over 
all positive elements of $\Hom(X,X)_*$. 

\begin{df}
Let $(\Cat,\ot,\one)$ be a monoidal $W^*$-category and $H \in \Cat$ an object. 
We say that an object $\bar H \in \Cat$ is a $^*$strong left-dual for 
$H$, if the sets 
\begin{align*}
\left\{ \left.(x \ot \id_{H})\circ(\id_{H} \ot y) 
	\,\right|\, x \in \Hom_\Cat(H \ot \bar H,\one),\, y \in \Hom_\Cat(\one,\bar H \ot H) \right\} 
	\subseteq \End_\Cat(H) \ , \nonumber\\
\left\{ \left.(\id_{\bar H} \ot x)\circ(y \ot \id_{\bar H}) 
	\,\right|\, x \in \Hom_\Cat(H \ot \bar H,\one),\, y \in \Hom_\Cat(\one,\bar H \ot H) \right\} 
	\subseteq \End_\Cat(\bar H) \ , 
\end{align*}
are $^*$strongly dense in $\End_\Cat(H)$, $\End_\Cat(\bar H)$. 
We call a monoidal $W^*$-category $(\Cat,\ot,\one)$ \emph{$^*$strongly rigid}, 
if every object $H \in \Cat$ admits a $^*$strong left-dual. 
\end{df}

The category $(\Hilb,\ot,\C)$ and, more generally, 
representation categories of compact quantum groups 
are $^*$strongly rigid.
We have:

\begin{prop}
Let $(\Cat,\ot,\one)$ be a $^*$strongly rigid concrete 
monoidal $W^*$-category, and let $c$ be a unitary 
braiding on $\Cat$. 
Then $c$ is bi-regular.
\end{prop}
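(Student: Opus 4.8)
The plan is to reduce bi-regularity to the single statement that \emph{a unitary braiding on a $^*$strongly rigid concrete monoidal $W^*$-category is regular}, and then to prove that by pushing the $^*$strong rigidity of the endomorphism algebras through the hexagon identities.

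\emph{Reduction.} Spelled out, bi-regularity of $c$ is the pair of equalities $[(\id\ot\omega)(c_{H,K})\mid\omega\in\B(\K,\H)_*]=\Kp(\H,\K)=[(\omega\ot\id)(c_{H,K})\mid\omega\in\B(\H,\K)_*]$ for all $H,K\in\Cat$; the first is regularity of $c$ in $(\Cat,\ot,\one)$, the second is regularity of the induced braiding in $(\Cat,\ot^\mathrm{op},\one)$. I would first observe that $(\Cat,\ot^\mathrm{op},\one)$ is again a concrete monoidal $W^*$-category with a unitary braiding, and that it is again $^*$strongly rigid: if $\bar H$ is a $^*$strong left-dual of $H$ in $\Cat$, witnessed by families of morphisms $x\in\Hom_\Cat(H\ot\bar H,\one)$ and $y\in\Hom_\Cat(\one,\bar H\ot H)$, then --- since composition is unchanged in $(\Cat,\ot^\mathrm{op})$ while $f\ot^\mathrm{op}g=g\ot f$ --- the families of adjoints $y^*\in\Hom_\Cat(\bar H\ot H,\one)$ and $x^*\in\Hom_\Cat(\one,H\ot\bar H)$ witness $\bar H$ as a $^*$strong left-dual of $H$ in $(\Cat,\ot^\mathrm{op})$, because the two $^*$strong-density conditions there are exactly the images of those in $\Cat$ under the isometric, $^*$strongly continuous involution. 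Hence it is enough to prove regularity of $c$.

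\emph{Proof of regularity.} Fix $H,K\in\Cat$ and a $^*$strong left-dual $\bar H$ of $H$, and let $C:=[(\id\ot\omega)(c_{H,K})\mid\omega\in\B(\K,\H)_*]$, the norm-closed linear span of the operators $(\id_\K\ot\langle\eta|)\,\For(c_{H,K})\,(\id_\H\ot|\xi\rangle)\in\B(\H,\K)$ over $\xi\in\K$, $\eta\in\H$; naturality of $c$ in both variables shows $C$ is a $\For(\End_\Cat K)$--$\For(\End_\Cat H)$-subbimodule of $\B(\H,\K)$. The key identity is $\id_K\ot y=(c_{\bar H,K}\ot\id_H)\circ(\id_{\bar H}\ot c_{H,K})\circ(y\ot\id_K)$, which follows from the hexagon axiom $c_{\bar H\ot H,K}=(c_{\bar H,K}\ot\id_H)\circ(\id_{\bar H}\ot c_{H,K})$, naturality of $c$ in its first argument along $y$, and $c_{\one,K}=\id_K$. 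Applying $\For$, then composing on the right with $|\xi\rangle\colon\C\to\K$, on the left with a covector on the $\For(\bar H)$-leg, and slicing the free $\H$-leg against $\langle\eta|$, rewrites a generic right-leg slice of $c_{H,K}$ --- one whose $\H$-input has been fed a vector of the form $(\langle\bar\nu|\ot\id_\H)\For(y)$ --- as an operator built from $\For(c_{\bar H,K})^{-1}$, the covectors, and $y$. Since by $^*$strong rigidity applied to $H$ the vectors $(\langle\bar\nu|\ot\id_\H)\For(y)$, as $\bar\nu$ and $y$ vary, span a dense subspace of $\H$ (the $(x\ot\id_H)(\id_H\ot y)$ being $^*$strongly dense in $\End_\Cat H$, hence collectively of dense range), nothing is lost: this identifies $C$ with the $\For(\End_\Cat K)$-bimodule span of the right-leg slices of $c_{\bar H,K}^{-1}$, compressed by the duality data of $H$. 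Running the same manipulation once more with $\bar H$ in place of $H$ --- a $^*$strong left-dual of $\bar H$ and duality morphisms for it being again furnished by the hypothesis --- and using the \emph{second} $^*$strong-density condition (density of the $(\id_{\bar H}\ot x)(y\ot\id_{\bar H})$ in $\End_\Cat\bar H$) to undo the compression, I would recover $C$ exhibited as the norm-closed span of compact operators assembled from the duality pairings, giving $C\subseteq\Kp(\H,\K)$. Reading the identities in the opposite direction produces, from the compact operators $\For((x\ot\id_H)(\id_H\ot y))$ --- which are $^*$strongly dense in $\For(\End_\Cat H)$ --- genuine elements of $C$ whose $\For(\End_\Cat K)$-bimodule span is norm-dense in $\Kp(\H,\K)$, giving the reverse inclusion; equality is regularity.

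The hardest part will be the topological bookkeeping connecting the two density notions: $^*$strong rigidity lives in the $^*$strong topologies of the $\End$-algebras, whereas regularity concerns \emph{norm}-closed spans of compact operators, so the hexagon and naturality identities must be used carefully to pass between them --- in particular to guarantee that the "compress, then decompress" loop recovers all of $C$ (and, dually, all of $\Kp(\H,\K)$) rather than only a proper dense part. A secondary nuisance is that the rewriting introduces the dual $\bar{\bar H}$ of $\bar H$, so to make the recursion self-consistent one should fix the witnessing families of duality morphisms compatibly, e.g.\ with $x$ and $y$ mutually adjoint up to the braiding, which is possible in a $W^*$-category.
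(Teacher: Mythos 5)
Your reduction of the second half of bi-regularity to the first --- passing to $(\Cat,\ot^{\mathrm{op}},\one)$ and noting that the adjoints $y^*,x^*$ of a $^*$strong duality for $H$ furnish one for $H$ in the opposite tensor product, since the involution is $^*$strongly continuous --- is correct, and is a cleaner way to organise what the paper dismisses with ``shown in a similar manner''. Your key identity $\id_K\ot y=(c_{\bar H,K}\ot\id_H)\circ(\id_{\bar H}\ot c_{H,K})\circ(y\ot\id_K)$ is also correct and is, up to replacing $y$ by $x$ and one hexagon by the other, exactly the naturality step hidden in the paper's graphical chain.

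The gap is in how you deploy the $^*$strong density. You feed the dense family of vectors $(\langle\bar\nu|\ot\id_\H)\For(y)$ into the \emph{input} of the slices $T_{\eta,\xi}=(\id_\K\ot\langle\eta|)\For(c_{H,K})(\id_\H\ot|\xi\rangle)$. That only controls the $T_{\eta,\xi}$ pointwise on a dense domain, equivalently it controls the norm-closed span of the compressions $T_{\eta,\xi}\circ a$ for $a$ running through a $^*$strongly dense subset of $\For(\End_\Cat H)$; but $a_n\to\id_\H$ $^*$strongly does \emph{not} give $Ta_n\to T$ in norm (take $a_n$ finite-rank projections and $T$ non-compact), so the norm-closed span $C$ is not recovered from its compressions, and neither inclusion $C\subseteq\Kp(\H,\K)$ nor $\Kp(\H,\K)\subseteq C$ follows. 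Your ``compress, then decompress'' loop --- which you yourself flag as the hard part --- has no mechanism behind it: once $T$ is composed with a ket it has collapsed to a vector of $\K$, and no further density argument in the $\End$-algebras reconstructs the operator's norm-closed span. The paper avoids this by applying rigidity to the \emph{covector} $\langle\eta|$ on the output leg: if $a_n\to\id_\H$ $^*$strongly, then $\langle\eta|\circ a_n\to\langle\eta|$ in \emph{norm} (this is precisely what the $^*$strong, rather than strong, topology buys), which yields the norm-density statements \eqref{eq:st-strong-rigidity--1}--\eqref{eq:st-strong-rigidity--3}; one then slides the duality morphism through the braiding by naturality and the hexagon, and removes the leftover unitary braiding with \eqref{eq:remove-unitaries}, obtaining $\Kp(\H,\K)$ as an equality of norm-closed spans in a single chain. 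To repair your argument, apply your key identity after slicing, so that the rigidity data always multiplies a bra or a ket and never the open operator leg.
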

\begin{proof}
As a normal fibre functor is also 
continuous with respect to the $^*$strong topology, 
we have 
in a $^*$strongly rigid and concrete $W^*$-category 
\begin{align}\label{eq:st-strong-rigidity--1}
\left[ 
\scalebox{\scalefactor}{
\begin{tikzpicture}[baseline=-\the\dimexpr\fontdimen22\textfont2\relax+.5cm, line width=.1em]
\draw (0,0) -- (0,1);
\bra{0}{1};
\node[anchor=west,inner sep=.2em] at (0,.15) {$H$};
\end{tikzpicture}} 
\right]
=
\left[ \left.
\scalebox{\scalefactor}{
\begin{tikzpicture}[baseline=-\the\dimexpr\fontdimen22\textfont2\relax+.75cm, line width=.1em]
\draw (0,0) -- (0,1.25) -- (1,.5) -- (1,1.5);
\bra{1}{1.5};
\node[draw,fill=white] at (.1,1.25) {$x$};
\node[draw,fill=white] at (.9,.5) {$y$};
\node[anchor=west,inner sep=.2em] at (0,.15) {$H$};
\end{tikzpicture}} 
\ \right|
{x \in \Hom_\Cat(H \ot \bar H, \one),\atop
y \in \Hom_\Cat(\one, \bar H \ot H)}
\right]
\ ,
\end{align}
as well as
\begin{align}\label{eq:st-strong-rigidity--2}
\left[ 
\scalebox{\scalefactor}{
\begin{tikzpicture}[baseline=-\the\dimexpr\fontdimen22\textfont2\relax+.5cm, line width=.1em]
\draw (0,0) -- (0,1);
\ket{0}{0};
\node[anchor=west,inner sep=.2em] at (0,.85) {$\bar H$};
\end{tikzpicture}} 
\right]
=
\left[ \left.
\scalebox{\scalefactor}{
\begin{tikzpicture}[baseline=-\the\dimexpr\fontdimen22\textfont2\relax+.75cm, line width=.1em]
\draw (-.3,1.5) -- (-.1,.25) -- (.1,.25) -- (.2,1);
\bra{.2}{1};
\node[draw,fill=white] at (0,.25) {$y$};
\node[anchor=west,inner sep=.2em] at (-.3,1.35) {$\bar H$};
\end{tikzpicture}} 
\ \right|
y \in \Hom_\Cat(\one, \bar H \ot H)
\right]
\end{align}
and
\begin{align}\label{eq:st-strong-rigidity--3}
\left[ 
\scalebox{\scalefactor}{
\begin{tikzpicture}[baseline=-\the\dimexpr\fontdimen22\textfont2\relax+.5cm, line width=.1em]
\draw (0,0) -- (0,1);
\bra{0}{1};
\node[anchor=west,inner sep=.2em] at (0,.15) {$H$};
\end{tikzpicture}} 
\right]
=
\left[ \left.
\scalebox{\scalefactor}{
\begin{tikzpicture}[baseline=-\the\dimexpr\fontdimen22\textfont2\relax+.75cm, line width=.1em]
\draw (-.3,0) -- (-.1,1.25) -- (.1,1.25) -- (.2,.5);
\ket{.2}{.5};
\node[draw,fill=white] at (0,1.25) {$x$};
\node[anchor=west,inner sep=.2em] at (-.3,.15) {$H$};
\end{tikzpicture}} 
\ \right|
x \in \Hom_\Cat(H \ot \bar H, \one)
\right]
\ .
\end{align}
Hence,
\begin{align}
\left[ 
\scalebox{\scalefactor}{
\begin{tikzpicture}[baseline=-\the\dimexpr\fontdimen22\textfont2\relax+.75cm, line width=.1em]
\draw (0,0) -- (0,.25);
\draw (0,1.25) -- (0,1.5);
\braid[border height=.0cm,height=1cm,width=1cm] at (0,1.25) s_1^{-1};
\bra{1}{1.25};
\ket{1}{.25};
\node[anchor=west,inner sep=.2em] at (0,1.35) {$K$};
\node[anchor=west,inner sep=.2em] at (0,.15) {$H$};
\end{tikzpicture}} 
\right]
&\stackrel{\eqref{eq:st-strong-rigidity--1}}=
\left[ \left.
\scalebox{\scalefactor}{
\begin{tikzpicture}[baseline=-\the\dimexpr\fontdimen22\textfont2\relax+.875cm, line width=.1em]
\draw (0,0) -- (0,.25);
\draw (0,1.25) -- (0,1.75);
\draw (1,1.25) -- (1,1.5) -- (2,.75) -- (2,1.5);
\braid[border height=.0cm,height=1cm,width=1cm] at (0,1.25) s_1^{-1};
\bra{2}{1.5};
\ket{1}{.25};
\node[draw,fill=white] at (1.1,1.5) {$x$};
\node[draw,fill=white] at (1.9,.75) {$y$};
\node[anchor=west,inner sep=.2em] at (0,1.6) {$K$};
\node[anchor=west,inner sep=.2em] at (0,.15) {$H$};
\end{tikzpicture}} 
\ \right|
{x \in \Hom_\Cat(H \ot \bar H, \one),\atop
y \in \Hom_\Cat(\one, \bar H \ot H)}
\right]
\nonumber\\
&\stackrel{\eqref{eq:st-strong-rigidity--2}}=
\left[ \left.
\scalebox{\scalefactor}{
\begin{tikzpicture}[baseline=-\the\dimexpr\fontdimen22\textfont2\relax+.875cm, line width=.1em]
\draw (.5,0) -- (.8,1.3);
\draw (2,1.25) -- (2,1.75);
\braid[border height=.0cm,height=1cm,width=1cm] at (1,1.25) s_1;
\ket{2}{.25};
\ket{1}{.25};
\node[draw,fill=white] at (.9,1.3) {$x$};
\node[anchor=east,inner sep=.2em] at (2,1.6) {$K$};
\node[anchor=east,inner sep=.2em] at (.5,.15) {$H$};
\end{tikzpicture}} 
\ \right|
x \in \Hom_\Cat(H \ot \bar H, \one)
\right]
\nonumber\\
&\stackrel{\eqref{eq:remove-unitaries}}=
\left[ \left.
\scalebox{\scalefactor}{
\begin{tikzpicture}[baseline=-\the\dimexpr\fontdimen22\textfont2\relax+.875cm, line width=.1em]
\draw (.5,0) -- (.8,1.3) -- (1,1.3) -- (1.2,.5);
\draw (2,.75) -- (2,1.75);
\ket{2}{.75};
\ket{1.2}{.5};
\node[draw,fill=white] at (.9,1.3) {$x$};
\node[anchor=east,inner sep=.2em] at (2,1.6) {$K$};
\node[anchor=west,inner sep=.2em] at (.5,.15) {$H$};
\end{tikzpicture}} 
\ \right|
x \in \Hom_\Cat(H \ot \bar H, \one)
\right]
\stackrel{\eqref{eq:st-strong-rigidity--3}}=
\left[ 
\scalebox{\scalefactor}{
\begin{tikzpicture}[baseline=-\the\dimexpr\fontdimen22\textfont2\relax, line width=.1em]
\draw (0,.25) -- (0,.75);
\draw (0,-.25) -- (0,-.75);
\ket{0}{.25};
\bra{0}{-.25};
\node at (.25,.6) {$K$};
\node at (.25,-.6) {$H$};
\end{tikzpicture}} 
\right] 
\ 
\end{align}
shows the regularity of the braiding $c$. 
Its bi-regularity is shown in a similar manner.
\end{proof}

We turn now to the second part of Definition 
\ref{df:bi-semi-regularity}.
A multiplicative unitary $F$ is bi-regular if 
\begin{align}\label{eq:biregularity-in-pictures}
\left[ 
\scalebox{\scalefactor}{
\begin{tikzpicture}[baseline=-\the\dimexpr\fontdimen22\textfont2\relax+.875cm, line width=.1em]
\draw (0,0) -- (0,.5);
\draw (0,1.5) -- (0,1.75);
\draw (1,.25) -- (1,.5);
\draw (0,.5) -- (1,.5);
\braid[border height=.0cm,height=1cm,width=1cm] at (0,1.5) s_1;
\bra{1}{1.5};
\ket{1}{.25};
\node at (.2,.15) {$L$};
\node at (-.2,.5) {$F$};
\end{tikzpicture}} 
\right]
\stackrel{\text{(*)}}=
\left[ 
\scalebox{\scalefactor}{
\begin{tikzpicture}[baseline=-\the\dimexpr\fontdimen22\textfont2\relax, line width=.1em]
\draw (0,.25) -- (0,1);
\draw (0,-.25) -- (0,-1);
\node[draw,fill=white,minimum width=.0em,
	inner sep=.2em,
	regular polygon,regular polygon sides=3,
	shape border rotate=60,
	yscale=.8,label=center:] at (0,.25) {};
\node[draw,fill=white,minimum width=.0em,
	inner sep=.2em,
	regular polygon,regular polygon sides=3,
	shape border rotate=0,
	yscale=.8,label=center:] at (0,-.25) {};
\node at (.25,.83) {$L$};
\node at (.25,-.83) {$L$};
\end{tikzpicture}} 
\right] 
\stackrel{\text{(**)}}=
\left[ 
\scalebox{\scalefactor}{
\begin{tikzpicture}[baseline=-\the\dimexpr\fontdimen22\textfont2\relax+.875cm, line width=.1em]
\draw (1,0) -- (1,.5);
\draw (1,1.5) -- (1,1.75);
\draw (0,.25) -- (0,.5);
\draw[dashed] (0,.5) -- (1,.5);
\braid[border height=.0cm,height=1cm,width=1cm] at (0,1.5) s_1^{-1};
\bra{0}{1.5};
\ket{0}{.25};
\node[anchor=east] at (1,.15) {$L$};
\node[anchor=west] at (1,.5) {$F^*$};
\end{tikzpicture}} 
\right]
=
\left[ 
\scalebox{\scalefactor}{
\begin{tikzpicture}[baseline=-\the\dimexpr\fontdimen22\textfont2\relax+.875cm, line width=.1em]
\draw (1,0) -- (1,.25);
\draw (1,1.25) -- (1,1.75);
\draw (0,1.25) -- (0,1.5);
\draw (0,1.25) -- (1,1.25);
\braid[border height=.0cm,height=1cm,width=1cm] at (0,1.25) s_1;
\bra{0}{1.5};
\ket{0}{.25};
\node[anchor=east] at (1,.15) {$L$};
\node[anchor=west] at (1,1.25) {$F$};
\end{tikzpicture}} 
\right]^* \ .
\end{align}
Note that the right-hand side picture in 
\eqref{eq:biregularity-in-pictures} is obtained 
from the one on the left-hand side by a 
rotation about 180$^\circ$. 
Therefore, if we have a pictorial proof of 
some statement which makes use of regularity, i.e.\@ (*) in 
\eqref{eq:biregularity-in-pictures}, 
then its rotation about 180$^\circ$ 
gives a statement with regularity replaced 
by the condition (**) in \eqref{eq:biregularity-in-pictures}.
In combination we obtain a statement involving 
bi-regularity.
We shall use this observation in the following 
to obtain analogues of results for regular 
multiplicative unitaries in the bi-regular 
setting.

Given a concrete monoidal $W^*$-category 
$\Cat$ with bi-semi-regular braiding $c$ 
and a multiplicative unitary 
$F \in \U_{\Cat}(L \ot L)$, we define -- 
in addition to the algebra $\hatA(F)$ -- 
\begin{align}
A(F) := [(\omega \ot \id)(F)\,|\, \omega \in \B(\L)_*] \ . 
\end{align}
Then, in analogy to Proposition \ref{prop:F-slices-are-non-deg-algebras}, 
we have:

\begin{prop}
\begin{enumerate}
\item $A(F) \subseteq \B(\L)$ is a subalgebra.
\item $[A(F)\L] = \L$, i.e.\@ $A(F)$ acts non-degenerately. 
\end{enumerate}
\end{prop}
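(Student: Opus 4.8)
The plan is to deduce both statements from Proposition~\ref{prop:F-slices-are-non-deg-algebras} via the $180^\circ$-rotation symmetry discussed around \eqref{eq:biregularity-in-pictures}. First I would pass to the concrete monoidal $W^*$-category $(\Cat,\ot^\mathrm{op},\one,c^{-1})$: here the braiding $c^{-1}$ is semi-regular (this is exactly bi-semi-regularity of $c$), and $F^*$ is a multiplicative unitary — the standing fact that makes Definition~\ref{df:bi-semi-regularity}(2) meaningful — so Proposition~\ref{prop:F-slices-are-non-deg-algebras} applies to $F^*$ inside $(\Cat,\ot^\mathrm{op},\one,c^{-1})$. The key bookkeeping point is that slicing the \emph{right} leg in $\ot^\mathrm{op}$ is slicing the \emph{first} leg in $\ot$, so that the algebra \enquote{$\hatA(F^*)$} formed in $(\Cat,\ot^\mathrm{op},\one,c^{-1})$ is precisely $[(\omega\ot\id)(F^*)\mid\omega\in\B(\L)_*]=A(F)^*$.

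For Part~1: by Proposition~\ref{prop:F-slices-are-non-deg-algebras}(1) applied in $(\Cat,\ot^\mathrm{op},\one,c^{-1})$, the set $A(F)^*$ is a norm-closed subalgebra of $\B(\L)$; taking adjoints, so is $A(F)$. Equivalently, and in the spirit of Section~\ref{sec:biregularity}, this is the chain \eqref{eq:hat-A-is-an-algebra} with every diagram rotated by $180^\circ$: right-leg slices become left-leg slices (so $[\hatA(F)\hatA(F)]$ becomes $[A(F)A(F)]$), the braided Pentagon equation \eqref{eq:br-pentagon-1} becomes the braided Pentagon equation satisfied by $F^*$ in $(\Cat,\ot^\mathrm{op},\one,c^{-1})$, and each appeal to semi-regularity of $c$ — condition (*) of \eqref{eq:biregularity-in-pictures} — becomes an appeal to condition (**) of \eqref{eq:biregularity-in-pictures}, which is legitimate by bi-semi-regularity.

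For Part~2: non-degeneracy of $A(F)$ does not follow formally from that of $A(F)^*$, but the argument of Proposition~\ref{prop:F-slices-are-non-deg-algebras}(2) goes through verbatim \enquote{from the other leg}, using only that $F$ is unitary. Given $\xi,\eta\in\L\setminus\{0\}$, from $F^*(\eta\ot\xi)\neq 0$ one obtains $\alpha,\beta\in\L$ with $\skapro{\eta\ot\xi,\,F(\beta\ot\alpha)}\neq 0$; then $(\omega_{\eta,\beta}\ot\id)(F)\alpha$ is not orthogonal to $\xi$, where $\omega_{\eta,\beta}(x)=\skapro{\eta,x\beta}$, whence $[A(F)\L]=\L$.

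The main obstacle is Part~1: checking carefully that the $180^\circ$-rotation sends the \emph{two} uses of braiding-semi-regularity in \eqref{eq:hat-A-is-an-algebra} to the \emph{two} directions packaged in bi-semi-regularity (equivalently, to semi-regularity of $c$ on $(\Cat,\ot^\mathrm{op},\one)$), and that the rotated Pentagon step is indeed the Pentagon equation for $F^*$ in the opposite-tensor / opposite-braiding category. Both are cleanest if one commits to the passage to $(\Cat,\ot^\mathrm{op},\one,c^{-1})$ rather than manipulating rotated pictures directly; with that identification in hand, Parts~1 and~2 are just Proposition~\ref{prop:F-slices-are-non-deg-algebras} reread from the first leg.
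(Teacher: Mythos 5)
Your proposal is correct and follows the paper's own (one-line) proof, which simply says to repeat the argument of Proposition~\ref{prop:F-slices-are-non-deg-algebras} with all pictures rotated by $180^\circ$; your passage to $(\Cat,\ot^\mathrm{op},\one,c^{-1})$ and the identification of $A(F)^*$ with the right-leg slice algebra of $F^*$ there is exactly the formal version of that rotation, and is in fact spelled out more carefully than in the paper (in particular, your observation that non-degeneracy does not transfer under adjoints, so Part~2 needs the direct Baaj--Skandalis argument on the first leg, is a point the paper leaves implicit). The only slight imprecision is your labelling of the braiding-semi-regularity steps as instances of (*)/(**) of \eqref{eq:biregularity-in-pictures} --- those conditions concern $F$, whereas what is used is the two directions of Definition~\ref{df:bi-semi-regularity}(1) for $c$ --- but your closing paragraph makes clear you mean the latter.
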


The proof is performed in a similar way as the proof 
of Proposition \ref{prop:F-slices-are-non-deg-algebras}, 
but with pictures rotated by 180$^\circ$. 
In this manner we also obtain -- cf.\@ 
Proposition \ref{prop:hat-AF-is-a-C*-alg} -- 

\begin{prop}
Let $\Cat$ be a bi-semi-regularly braided concrete monoidal $W^*$-category, 
and let $L \in \Cat$ be an object. 
If $F \in \U_{\Cat}(L \ot L)$ is a bi-semi-regular 
multiplicative unitary, 
then $A(F)$ is a $C^*$-algebra. 
\end{prop}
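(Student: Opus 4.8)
The plan is to obtain the statement from Proposition~\ref{prop:hat-AF-is-a-C*-alg} by rotating its proof through $180^\circ$, which --- as explained around \eqref{eq:biregularity-in-pictures} --- trades the regularity condition (*) for the condition (**); conceptually, this rotation is the passage to the opposite monoidal structure $\Cat^\mathrm{op} := (\Cat,\ot^\mathrm{op},\one,c^{-1})$, equipped with the fibre functor $\For$ whose coherence isomorphisms pick up the flip of $\Hilb$. Since $c$ is bi-semi-regular, the braiding $c^{-1}$ is semi-regular on $(\Cat,\ot^\mathrm{op},\one)$, so $\Cat^\mathrm{op}$ is a semi-regularly braided concrete monoidal $W^*$-category; and since $F$ is bi-semi-regular, $F^*$ is a semi-regular multiplicative unitary in $\Cat^\mathrm{op}$. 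Hence $\Cat^\mathrm{op}$ and $F^*$ meet the hypotheses of Proposition~\ref{prop:hat-AF-is-a-C*-alg}.

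First I would compute the $\hatA$-construction of Proposition~\ref{prop:F-slices-are-non-deg-algebras} for $F^*$ in $\Cat^\mathrm{op}$: because $X\ot^\mathrm{op}Y = Y\ot X$, slicing over the right leg in $\Cat^\mathrm{op}$ becomes slicing over the left leg in $\Cat$, and because $(\omega\ot\id)(F^*) = ((\bar\omega\ot\id)(F))^*$ with $\bar\omega$ running over all normal functionals on $\B(\L)$ as $\omega$ does, one obtains
\begin{align*}
[(\omega\ot\id)(F^*)\,|\,\omega\in\B(\L)_*] = A(F)^* \ .
\end{align*}
Then I would invoke Proposition~\ref{prop:hat-AF-is-a-C*-alg} in $\Cat^\mathrm{op}$ to conclude that this set is a $C^*$-algebra, so that $A(F)$, being its adjoint, is a $C^*$-algebra as well. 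Alternatively --- and this is the pictorial incarnation of the same argument --- the preceding proposition gives that $A(F)$ is a non-degenerate norm-closed subalgebra of $\B(\L)$, and the $180^\circ$-rotation of the identity $[(F\ot\id_\L)(\id_\L\ot C(F))(F^*\ot\id_\L)] = \hatA(F)^*$ from the proof of Proposition~\ref{prop:hat-AF-is-a-C*-alg} exhibits $A(F)^*$ as an obviously self-adjoint set; a norm-closed self-adjoint subalgebra of $\B(\L)$ is a $C^*$-algebra by \cite{Timmermann:An_inv_to_QG_and_duality}*{Lem.\,7.3.7}. In this rotated calculation the braided Pentagon equation \eqref{eq:br-pentagon-1} is applied to $F^*$ in $\Cat^\mathrm{op}$, \eqref{eq:remove-unitaries} is invariant under rotation, and the self-adjointness input playing the role of the hypothesis $C(F)=C(F)^*$ is supplied by the $180^\circ$-rotation of the lemma preceding Proposition~\ref{prop:hat-AF-is-a-C*-alg}, run for $F^*$ in $\Cat^\mathrm{op}$ (which in turn uses bi-semi-regularity of $c$ and of $F$).

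The step I expect to be the real work is setting up the rotation dictionary accurately: verifying that each rotated occurrence of regularity of the braiding is precisely condition (**) of \eqref{eq:biregularity-in-pictures}, that each rotated occurrence of semi-regularity of $F$ is precisely semi-regularity of $F^*$ in $\Cat^\mathrm{op}$, and --- since the string calculus suppresses them --- that the coherence isomorphisms relating $\For(X\ot Y)$ and $\For(Y\ot X)$ (the flips of $\Hilb$) are inserted correctly when $\ot$ is traded for $\ot^\mathrm{op}$. Once this dictionary is fixed, no genuinely new computation remains; $A(F)$ being a $C^*$-algebra is exactly the content of Proposition~\ref{prop:hat-AF-is-a-C*-alg} read in the opposite monoidal category.
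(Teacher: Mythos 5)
Your proposal is correct and follows essentially the same route as the paper, which itself only sketches the argument by saying the proof of Proposition \ref{prop:hat-AF-is-a-C*-alg} is to be repeated with all pictures rotated by $180^\circ$ (so that regularity (*) is traded for condition (**) as in \eqref{eq:biregularity-in-pictures}); your reformulation of the rotation as an application of Proposition \ref{prop:hat-AF-is-a-C*-alg} to $F^*$ in $(\Cat,\ot^\mathrm{op},\one,c^{-1})$, together with the identification $[(\omega\ot\id)(F^*)\,|\,\omega\in\B(\L)_*]=A(F)^*$, is a faithful and slightly more explicit account of exactly this argument, and you correctly isolate the only real work (the rotation dictionary and the suppressed coherence isomorphisms).
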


Let $\Cat$ be a bi-regularly braided concrete monoidal $W^*$-category. 
We define a category $A(\Cat)$ in a similar way as $\hatA(\Cat)$ 
in Section \ref{ssec:category-hat-A(C)}: 
Objects of $A(\Cat)$ are pairs $(H,A)$, where $H \in \Cat$ 
and $A \subseteq \B(\H)$ is a $C^*$-subalgeba 
of the form $A = A(X) := [(\omega \ot \id)(X)\,|\, \omega \in \B(\K)_*]$, 
for some $K \in \Cat$ and morphism $X \in \End_\Cat(K \ot H)$. 
Morphisms $(H_1,A_1) \to (H_2,A_2)$ 
in $A(\Cat)$ are
non-degenerate $^*$-homomorphisms $f:\, A_1 \to \Malg(A_2)$ 
of the form 
$A_1 \ni a \mapsto V(a \ot \id_K)V^*$, 
for some $K \in \Cat$ and some partial isometry 
$V \in \Hom_\Cat(H_1 \ot K, H_2)$. 
With 
$(H_1,A_1) \bt (H_2,A_2) := (H_1 \ot H_2, A_1 \bt A_2)$, 
for objects $(H_1,A_1),(H_2,A_2) \in A(\Cat)$, 
where 
\begin{align}\label{eq:bt-on-algebras}
A_1 \bt A_2 := \bigl[(A_1 \ot \id_{\H_2}) \cdot 
	c_{H_1,H_2}^{-1}(A_2 \ot \id_{\H_1})c_{H_1,H_2}\bigr] \subseteq \B(\H_1 \ot \H_2) \ ,
\end{align}
we get a monoidal category of $C^*$-algebras 
$(A(\Cat),\bt)$.

Furthermore, we obtain:

\begin{thm}
Let $\Cat$ be a bi-regularly braided concrete monoidal $W^*$-category, 
let $L \in \Cat$ be an object with underlying 
Hilbert space $\L$, and let $F \in \U_{\Cat}(L \ot L)$ 
be a bi-regular multiplicative unitary. 
Then we have 
\begin{enumerate}
\item $F \in \Malg(A(\hat F) \bt A(F))$ 
\item $[(A(\hat F) \bt \id_\L)F(\id_\L \bt A(F))] = A(\hat F) \bt A(F)$. 
\end{enumerate}
\end{thm}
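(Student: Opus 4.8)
\emph{Approach.} Statements~1 and~2 are the $180^\circ$-rotations of statements~1 and~2 of Theorem~\ref{thm:F-as-multiplier}, with $\hatA(F)\hbt\hatA(\hat F)$ replaced throughout by $A(\hat F)\bt A(F)$. As observed around \eqref{eq:biregularity-in-pictures}, rotating a diagrammatic argument by $180^\circ$ turns each appeal to regularity of $F$ (condition~(*)) into an appeal to condition~(**), available since $F$ is bi-regular, and each appeal to regularity of the braiding $c$ on $(\Cat,\ot,\one)$ into an appeal to regularity of $c$ on $(\Cat,\ot^\mathrm{op},\one)$, available since $c$ is bi-regular; the Pentagon equation \eqref{eq:br-pentagon-1} and the identity \eqref{eq:remove-unitaries} survive the rotation up to relabelling of strands. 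So the plan is to transcribe the proof of Theorem~\ref{thm:F-as-multiplier} picture by picture, each picture rotated by $180^\circ$. As preparation one records the rotated form of Lemma~\ref{lem:on-braided-tensor-products}, \ie $A\bt\Kp(\H)=A\ot_\mathrm{min}\Kp(\H)$ for $(K,A)\in A(\Cat)$, $H\in\Cat$ (and likewise with the product built from $c$ in place of $c^{-1}$), proved by rotating the proof of Lemma~\ref{lem:on-braided-tensor-products}; and one notes that $(\L,A(F))$ and $(\L,A(\hat F))$ lie in $A(\Cat)$ (take $X=F$, resp.\ $X=\hat F$) and are $C^*$-algebras by the bi-semi-regular analogue of Proposition~\ref{prop:hat-AF-is-a-C*-alg}, so that $A(\hat F)\bt A(F)$ -- a crossed product by the construction underlying \eqref{eq:bt-on-algebras} -- is a $C^*$-algebra and $\Malg(A(\hat F)\bt A(F))$ is meaningful.

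\emph{Part~1.} Mirror the three steps of Part~1 of Theorem~\ref{thm:F-as-multiplier}. Step~1: $F\in\Malg(\Kp(\L)\bt A(F))$, shown by computing $F\cdot(\Kp(\L)\bt A(F))$ via the rotation of the computation of $(\hatA(F)\hbt\Kp(\L))F$, using \eqref{eq:br-pentagon-1}, \eqref{eq:remove-unitaries}, condition~(**) for $F$, and the rotated Lemma~\ref{lem:on-braided-tensor-products}. Step~2: $F\in\Malg(A(\hat F)\bt\Kp(\L))$, deduced from Step~1 applied to $\hat F$ in $(\Cat,\ot^\mathrm{op},\one,c^{-1})$ exactly as in Part~1/step~2 -- the product built from $c$ enters in place of $\bt$, and one uses the analogue of \eqref{eq:boxtimes--via--anti-bt} relating the two products. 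Step~3: combine, compressing the (rotated) three-leg morphism $\tilde F$, which equals $F_{12}^*F_{23}F_{12}F_{23}^*$ and lies in $\Malg(A(\hat F)\bt\Kp(\L)\bt A(F))$ by Steps~1--2, down to $F$ by a span of isometries $S\subseteq\B(\L\ot\L,\L\ot\L\ot\L)$ with $[S^*S]=\C\id_{\L\ot\L}$, $[S^*\tilde F S]=\C\cdot F$ and $(A(\hat F)\bt\Kp(\L)\bt A(F))\cdot S=S\cdot(A(\hat F)\bt A(F))$, as in Part~1/step~3; this yields $F\in\Malg(A(\hat F)\bt A(F))$.

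\emph{Part~2.} Again three rotated steps: Step~1, $[(\Kp(\L)\bt\id_\L)F(\id_\L\bt A(F))]=\Kp(\L)\bt A(F)$; Step~2, $[(A(\hat F)\bt\id_\L)F(\id_\L\bt\Kp(\L))]=A(\hat F)\bt\Kp(\L)$, from Step~1 applied to $\hat F$; Step~3, the asserted identity, obtained from $\tilde F=F_{12}^*F_{23}F_{12}F_{23}^*$ together with Steps~1--2 of Part~1 and Steps~1--2 of Part~2 -- all being the $180^\circ$-rotations of the corresponding computations in Theorem~\ref{thm:F-as-multiplier}.2.

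\emph{Main obstacle.} The rotated computations themselves are mechanical; what needs genuine care is the dictionary. One must check that $180^\circ$-rotation carries $\hbt$ (as in \eqref{eq:hbt-on-algebras}, built from $c$ with a fixed order of the two tensor legs) to $\bt$ (as in \eqref{eq:bt-on-algebras}, built from $c^{-1}$ with the \emph{reversed} order of legs), and that it carries \enquote{$\hatA(F)$ on leg~$1$, $\hatA(\hat F)$ on leg~$2$} to \enquote{$A(\hat F)$ on leg~$1$, $A(F)$ on leg~$2$} -- in particular the leg-swap and the interchange of $F$ with $\hat F$ must come out right. A secondary point is that $\hat F$ inherits bi-semi-regularity from $F$ (mirror of Remark~\ref{rmk:regular-MU}), which is what makes $A(\hat F)$ and $A(\hat F)\bt\Kp(\L)$ into $C^*$-algebras in Step~2. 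Once the dictionary is fixed, every line of the proof of Theorem~\ref{thm:F-as-multiplier} transcribes verbatim in rotated form.
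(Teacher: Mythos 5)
Your proposal is correct and follows exactly the route the paper intends: the theorem is stated there as an instance of the rotate-by-$180^\circ$ principle applied to Theorem~\ref{thm:F-as-multiplier}, with no further written proof, and your transcription of the three-step structure of each part (including the $S$-compression in step~3) matches that plan. Your explicit attention to the dictionary -- that rotation swaps the legs, exchanges $\hbt$ built from $c$ with $\bt$ built from $c^{-1}$, and sends the pair $(\hatA(F),\hatA(\hat F))$ to $(A(\hat F),A(F))$ -- addresses precisely the point the paper leaves implicit.
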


\begin{prop}
Let $\Cat$ be a bi-regularly braided concrete monoidal $W^*$-category, 
let $L \in \Cat$ be an object with underlying 
Hilbert space $\L$, and let $F \in \U_{\Cat}(L \ot L)$ 
be a bi-regular multiplicative unitary. 
For $a \in A(F)$, put $\Delta(a) := F(a \ot \id_L)F^*$. 
Then $(A,\Delta)$ is a bi-simplifiable $C^*$-bialgebra in 
$A(\Cat)$.
\end{prop}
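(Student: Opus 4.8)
The plan is to run the proof of the statement that $(\hatA(F),\hat\Delta^\mathrm{op})$ is a bi-simplifiable $C^*$-bialgebra in $(\hatA(\Cat),\habt)$, rotated by 180$^\circ$ in the sense of Section~\ref{sec:biregularity}, and using throughout the bi-regular analogues of Theorem~\ref{thm:F-as-multiplier} and Lemma~\ref{lem:on-braided-tensor-products} --- the former being the theorem displayed just above. First I would settle the structural preliminaries. Since a bi-regular multiplicative unitary is in particular bi-semi-regular, $A(F)$ is a $C^*$-algebra by the proposition recording this, and $(L,A(F))$ is an object of $A(\Cat)$ because $A(F)=A(X)$ for the choice $X=F$. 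The map $\Delta\colon a\mapsto F(a\ot\id_L)F^*$ has the form $a\mapsto V(a\ot\id_K)V^*$ required of a morphism in $A(\Cat)$, with $K=L$ and $V=F$ (unitary, hence a partial isometry); what remains, for $\Delta$ to be a morphism $(L,A(F))\to(L,A(F))\bt(L,A(F))$, is that $\Delta$ take values in $\Malg(A(F)\bt A(F))$ and be non-degenerate, and both are immediate from the Podle\'s conditions below, just as in the $\habt$-case.

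Coassociativity $(\Delta\bt\id)\circ\Delta=(\id\bt\Delta)\circ\Delta$ is a consequence of the braided Pentagon equation~\eqref{eq:br-pentagon-1}: since $\Delta$ is conjugation by $F$ and $\bt$ on $A(\Cat)$ is assembled from the braiding, evaluating either composite on $a\in A(F)$, placed in the first leg of $L\ot L\ot L$, yields a conjugation of $a$ by a fixed word in $F_{12}$, $F_{23}$ and half-braidings, and the two words agree by (the 180$^\circ$-rotation of) the Pentagon equation together with naturality of the braiding. This is the same mechanism that underlies coassociativity of $\hat\Delta$ and $\hat\Delta^\mathrm{op}$, so I would present it by that analogy rather than redo it.

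The main point is bi-simplifiability, i.e. the two Podle\'s conditions $A(F)\bt A(F)=[\Delta(A(F))\cdot(A(F)\bt 1_{A(F)})]=[\Delta(A(F))\cdot(1_{A(F)}\bt A(F))]$. For the first I would adjoin an auxiliary third leg, rewrite $[\Delta(A(F))\cdot(A(F)\bt 1_{A(F)})]$ using the braided Pentagon equation, pass from $A(F)\ot_\mathrm{min}\Kp(\L)$ to the relevant braided tensor product by the bi-regular analogue of Lemma~\ref{lem:on-braided-tensor-products}, apply part~2 of the Theorem stated just above, and finally delete the spurious leg via~\eqref{eq:remove-unitaries}, invoking non-degeneracy of $A(F)$ on $\L$ and of $A(\hat F)$ on $\Kp(\L)$ (the latter shown exactly as in Proposition~\ref{prop:F-slices-are-non-deg-algebras}). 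This chain is the 180$^\circ$-rotation of the first Podle\'s computation in the proof for $\hat\Delta^\mathrm{op}$; rotating the second, pictorial, computation there yields the second Podle\'s condition. Once both hold, $(A(F),\Delta)$ is a coalgebra in $(A(\Cat),\bt)$ and is bi-simplifiable by construction.

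The obstacle I anticipate is bookkeeping rather than anything conceptual: the inclusion of the second factor into $A_1\bt A_2$ carries the braiding in a different position from the corresponding inclusion into $A_1\hbt A_2$ (compare~\eqref{eq:bt-on-algebras} with~\eqref{eq:hbt-on-algebras}), so one must fix the rotation dictionary with care and check that the bi-regular forms of Lemma~\ref{lem:on-braided-tensor-products} and of Theorem~\ref{thm:F-as-multiplier} really are the literal 180$^\circ$-rotations of their $\hbt$-counterparts. With that dictionary in place, every step above is a routine transcription of the regular case.
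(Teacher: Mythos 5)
Your proposal is correct and coincides with the paper's own treatment: the paper gives no explicit proof of this proposition, instead appealing to the rotate-by-$180^\circ$ dictionary of Section~\ref{sec:biregularity} applied to the proof for $(\hatA(F),\hat\Delta^\mathrm{op})$ in $(\hatA(\Cat),\habt)$, which is exactly the plan you lay out (including the correct identification of the bookkeeping issue in comparing \eqref{eq:bt-on-algebras} with \eqref{eq:hbt-on-algebras}).
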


Unfortunately, we are unable to prove an analogue of 
Proposition \ref{prop:YD-cats-of-good-regular-MUs-have-regular-braiding}. 
The rotate-by-180$^\circ$ trick does not work 
since the graphical representation 
of the braiding $\Phi$ on 
the Yetter-Drinfeld category $\FYDF$ 
is not symmetric under rotations by 180$^\circ$.
\section{Acknowledgement}
The first author is fully, and the second author is partially supported by the German Research 
Foundation (Deutsche Forschungsgemeinschaft (DFG)) through the Research 
Training Group 1493. The second author is also partially supported by the Fields--Ontario 
postdoctoral fellowship. We thank Ralf Meyer and Folkert M\"uller-Hoissen for helpful comments 
and suggestions, and the Fields Institute for its hospitality.

\begin{bibdiv}
  \begin{biblist}
\bib{Baaj-Skandalis:Unitaires}{article}{
  author={Baaj, Saad},
  author={Skandalis, Georges},
  title={Unitaires multiplicatifs et dualit\'e pour les produits crois\'es de $C^*$\nobreakdash -alg\`ebres},
  journal={Ann. Sci. \'Ecole Norm. Sup. (4)},
  volume={26},
  date={1993},
  number={4},
  pages={425--488},
  issn={0012-9593},
  review={\MRref {1235438}{94e:46127}},
  eprint={http://www.numdam.org/item?id=ASENS_1993_4_26_4_425_0},
}

\bib{Bakalov-Kirillov:Tens_cat}{book}{
  author={Bakalov, Bojko},
  author={Kirillov, Jr., Alexander},
  title={Lectures on tensor categories and modular functors},
  series={University Lecture Series},
  volume={21},
  publisher={American Mathematical Society, Providence, RI},
  date={2001},
  pages={x+221},
  isbn={0-8218-2686-7},
  review={\MRref {1797619}{2002d:18003}},
}

\bib{Ghez-Lima-Roberts:Wstar}{article}{
  author={Ghez, P.},
  author={Lima, Ricardo},
  author={Roberts, John E.},
  title={$W^*$\nobreakdash -categories},
  journal={Pacific J. Math.},
  volume={120},
  date={1985},
  number={1},
  pages={79--109},
  issn={0030-8730},
  review={\MRref {808930}{87g:46091}},
  eprint={http://projecteuclid.org/euclid.pjm/1102703884},
}

\bib{Meyer-Roy-Woronowicz:Homomorphisms}{article}{
  author={Meyer, Ralf},
  author={Roy, Sutanu},
  author={Woronowicz, Stanis\l aw Lech},
  title={Homomorphisms of quantum groups},
  journal={M\"unster J. Math.},
  volume={5},
  date={2012},
  pages={1--24},
  issn={1867-5778},
  review={\MRref {3047623}{}},
  eprint={http://nbn-resolving.de/urn:nbn:de:hbz:6-88399662599},
}

\bib{Meyer-Roy-Woronowicz:Twisted_tensor}{article}{
  author={Meyer, Ralf},
  author={Roy, Sutanu},
  author={Woronowicz, Stanis\l aw Lech},
  title={Quantum group-twisted tensor products of \(\textup C^*\)\nobreakdash -algebras},
  journal={Internat. J. Math.},
  volume={25},
  date={2014},
  number={2},
  pages={1450019, 37},
  issn={0129-167X},
  review={\MRref {3189775}{}},
  doi={10.1142/S0129167X14500190},
}

\bib{Roy:Qgrp_with_proj}{thesis}{
  author={Roy, Sutanu},
  title={\(\textup C^*\)\nobreakdash -Quantum groups with projection},
  date={2013},
  type={phdthesis},
  institution={Georg-August Universit\"at G\"ottingen},
  eprint={http://hdl.handle.net/11858/00-1735-0000-0022-5EF9-0},
}

\bib{Soltan-Woronowicz:Multiplicative_unitaries}{article}{
  author={So\l tan, Piotr M.},
  author={Woronowicz, Stanis\l aw Lech},
  title={From multiplicative unitaries to quantum groups. II},
  journal={J. Funct. Anal.},
  volume={252},
  date={2007},
  number={1},
  pages={42--67},
  issn={0022-1236},
  review={\MRref {2357350}{2008k:46170}},
  doi={10.1016/j.jfa.2007.07.006},
}

\bib{Street:Fusion_Operator}{article}{
  author={Street, Ross},
  title={Fusion operators and cocycloids in monoidal categories},
  journal={Appl. Categ. Structures},
  volume={6},
  date={1998},
  number={2},
  pages={177--191},
  issn={0927-2852},
  review={\MRref {1629381}{99d:18008}},
  doi={10.1023/A:1008655911796},
}

\bib{Timmermann:An_inv_to_QG_and_duality}{book}{
  author={Timmermann, Thomas},
  title={An invitation to quantum groups and duality},
  publisher={European Mathematical Society (EMS)},
  place={Z\"urich},
  date={2008},
  pages={xx+407},
  isbn={978-3-03719-043-2},
  review={\MRref {2397671}{2009f:46079}},
  doi={10.4171/043},
}

\bib{Woronowicz:Tannaka-Krein}{article}{
  author={Woronowicz, Stanis\l aw Lech},
  title={Tannaka--Kre\u \i n duality for compact matrix pseudogroups. Twisted $\mathrm {SU}(N)$ groups},
  journal={Invent. Math.},
  volume={93},
  date={1988},
  number={1},
  pages={35--76},
  issn={0020-9910},
  review={\MRref {943923}{90e:22033}},
  doi={10.1007/BF01393687},
}

\bib{Woronowicz:Unbounded_affiliated}{article}{
  author={Woronowicz, Stanis\l aw Lech},
  title={Unbounded elements affiliated with $C^*$\nobreakdash -algebras and noncompact quantum groups},
  journal={Comm. Math. Phys.},
  volume={136},
  date={1991},
  number={2},
  pages={399--432},
  issn={0010-3616},
  eprint={http://projecteuclid.org/euclid.cmp/1104202358},
  review={\MRref {1096123}{92b:46117}},
}

\bib{Woronowicz:Mult_unit_to_Qgrp}{article}{
  author={Woronowicz, Stanis\l aw Lech},
  title={From multiplicative unitaries to quantum groups},
  journal={Internat. J. Math.},
  volume={7},
  date={1996},
  number={1},
  pages={127--149},
  issn={0129-167X},
  review={\MRref {1369908}{96k:46136}},
  doi={10.1142/S0129167X96000086},
}
  \end{biblist}
\end{bibdiv}
\end{document}